\newcommand{\Li}{\operatorname{Li}}
\newcommand{\ZZ}{\mathbb{Z}}
\newcommand{\RR}{\mathbb{R}}
\newcommand{\SL}{\operatorname{SL}}
\newcommand{\approxplus}{\operatorname{approx}^+}
\newcommand{\approxminus}{\operatorname{approx}^-}
\newcommand{\approxpm}{\operatorname{approx}^\pm}
\newcommand{\slope}{\operatorname{slope}}
\theoremstyle{plain}
\newtheorem{theorem}{Theorem}
\newtheorem{proposition}{Proposition}
\newtheorem{corollary}{Corollary}
\newtheorem{lemma}{Lemma}
\theoremstyle{definition}
\newtheorem{definition}{Definition}
\theoremstyle{remark}
\begin{document}

\title{The Three Gap Theorem, Interval Exchange Transformations, and Zippered Rectangles}
\author{Diaaeldin Taha}
\address{Department of Mathematics
University of Washington}
\email{dtaha@uw.edu}

\begin{abstract}
The Three Gap Theorem states that for any $\alpha \in (0,1)$ and any integer $N \geq 1$, the fractional parts of the sequence $0, \alpha, 2\alpha, \cdots, (N-1)\alpha$ partition the unit interval into $N$ subintervals having at most \emph{three} distinct lengths. We here provide a new proof of this theorem using zippered rectangles, and present a new gaps theorem (along with two proofs) for sequences generated as orbits of general interval exchange transformations.  We also derive a number of results on primitive points in lattices mirroring several properties of Farey fractions. This makes it possible to derive a previously known, explicit distribution result related to the Three Gap Theorem using ergodic theory.
\end{abstract}

\maketitle

\section{Introduction}
\subsection{Orbits, Gaps, and Randomness}
\label{subsection: orbits, gaps, and randomness}

A key theme in dynamical systems is understanding the extent to which orbits of a dynamical system resemble sequences of independent, identically distributed (i.i.d.) random variables. In this paper we consider sequences $s = (s_n)_{n=0}^\infty \subset [0,1)$ arising as orbits of rotations and interval exchange transformations $T:[0,1) \rightarrow [0, 1)$. That is, we consider sequences $s$ defined by
\begin{equation*}
    s_n = T^{n}(x),
\end{equation*}
with $T$ being an interval exchange transformation. In what follows, we use the interval $[0, 1)$, and the circle $\mathbb{S}^1$ interchangeably.

Following \cite{Marklof2006-tt} and later \cite{Athreya2012-kd}, the property of whether a sequence $s \subset [0, 1)$ equidistributes can be considered a first order statistical measure of randomness. Specifically, a sequence $s \subset [0, 1)$ is said to \textbf{equidistribute} if the measures $\Delta_N = \frac{0}{N-1}\sum_{n=1}^N \delta_{s_n}$ weak-$\ast$ converge to the Lebesgue measure $\operatorname{Leb}_{[0,1)}$ on the unit interval as $N \to \infty$. For the orbit sequences $s = \left(T^nx\right)_{n=0}^\infty$, the ergodicity of the map $T$ implies this convergence for almost every starting point $x$, and unique ergodicity strengthens this to every point. This covers the first order statistics of the sequences.

A finer measure of randomness is whether the \textbf{normalized gap distribution} for a sequence $(s_n)_{n=0}^\infty$ resembles that of an i.i.d sequence of uniformly distributed random variables $(X_n)_{n=0}^\infty \subset [0,1)$. More precisely, given a finite segment $(s_n)_{n=0}^{N-1}$, the points arrange themselves on the unit interval
\begin{equation*}
    0 \leq s_{\sigma_{s, N}(1)} \leq s_{\sigma_{s, N}(2)} \leq \cdots \leq s_{\sigma_{s, N}(N)} < 1,
\end{equation*}
with $\sigma_{s, N} : \{0, 1, \cdots, N-1\} \to \{0, 1, \cdots, N-1\}$ denoting the permutation of $\{0, 1, \cdots, N-1\}$ induced by the order of the points. The gaps
\begin{equation*}
    s_{\sigma_{s,N}(1)} - 0,\ s_{\sigma_{s,N}(2)} - s_{\sigma_{s,N}(1)},\ \cdots,\ s_{\sigma_{s,N}(N)} - s_{\sigma_{s,N}(N-1)}, 1 - s_{\sigma_{s,N}(N)}
\end{equation*}
form a multiset $\widetilde{\operatorname{Gaps}}_{s, N}$. The limiting behavior of the normalized gap distribution \[\lim_{N \to \infty} \frac{\#\left((N \cdot \widetilde{\operatorname{Gaps}}_{s, N}) \cap (a, b)\right)}{N}\] for $0 \leq a  < b \leq \infty$ gives the second order statistics of the sequence.

For an i.i.d sequence of uniformly distributed variables $(X_n)_{n=0}^\infty$, the limiting behavior is Poissonian: for any $t>0$,
\begin{equation*}
    \lim_{N \to \infty} \frac{\#\left((N \cdot \widetilde{Gaps}_{(X_n)_{n=0}^\infty, N} \cap (t, \infty))\right)}{N} = e^{-t}.
\end{equation*}
Sequences $s$ whose gaps stray from that Poissonian limiting behavior are called \textbf{exotic} in \cite{Athreya2012-kd}. The interested reader should refer to \cite{Athreya2012-kd} for examples of exotic sequences, and to get an understanding of the dynamical approach to gap distributions in general.

Circle rotations and interval exchange transformations (IETs) are low complexity maps (having zero topological entropy) essential to the study of polygonal billiards and linear flows on translation surfaces. In this paper, we study gaps for sequences $s$ that arise as orbits of circle rotations $s = \left(R_\alpha^n 0\right)_{n=0}^\infty$, and interval exchange transformations $s = \left(T^n0\right)_{n=0}^\infty$.

\subsection{Goals and Organization}

We have three goals in this paper: interpret and prove the Three Gap Theorem (\cref{theorem: Three Gap Theorem}) geometrically using zippered rectangle decompositions, draw parallels between Farey fractions and primitive points in arbitrary lattices towards deriving a gap distribution result related to the Three Gap Theorem (\cref{theorem: continuous distribution arising from the three gap theorem}) using ergodic theory, and generalize the Three Gap Theorem to $d$-IETs. We organize the paper as follows.

\begin{itemize}
\item In the remainder of this section, we state the well-known Three Gap Theorem (\cref{theorem: Three Gap Theorem}), our generalization thereof to $d$-IETs (\cref{theorem: d + 2 gap theorem}), an average gap distribution result from \cite{Polanco_undated-ts} related to the Three Gap Theorem (\cref{theorem: continuous distribution arising from the three gap theorem}), and our interepretition of the aforementioned gap distribution in terms of the average height of zippered rectangles in the space of unimodular tori (\cref{theorem: distribution for TGT}). We also axiomatize a gap distribution result for our generalized gap theorem for $d$-IETS (\cref{theorem: axiomatic theorem}). We start by presenting the definition of IETs, and follow that with a short exposition of the Three Gap Theorem to provide historical context.
\item In \cref{section: background} we review some facts about the space of unimodular lattices/tori, the horocycle and geodesic flows, and zippered rectangle decompositions.
\item In \cref{section: the three gap theorem and zippered rectangles} we relate gaps for orbits of circle rotations to zippered rectangle decompositions of unit area tori, providing proofs for \cref{theorem: Three Gap Theorem}, and \cref{theorem: distribution for TGT} along the way.
\item In \cref{section: primitive lattice points}, we build the tools to explicitly derive the gap distribution in \cref{theorem: distribution for TGT} (i.e. derive the expression in \cref{theorem: continuous distribution arising from the three gap theorem}) for all tori. In particular, we show that primitive lattice points share many properties with Farey fractions: Farey neighbors (\cref{lemma: Farey Neighbors}), dynamics of the Farey triangle and the BCZ map (\cref{corollary: primitive lattice points and the Farey triangle}), and generative algorithm (\cref{theorem: generating primitive lattice points}). We also derive the zippered rectangle decompositions of  tori whose corresponding lattices have no short horizontal vectors (\cref{corollary: Zippered Rectangle Decompositions of Tori}), and extend previously known distribution results for the denominators of Farey fractions to the $x$-components of primitive vectors of all lattices (\cref{theorem: equidistribution of x-component measures}). This makes it possible to derive the explicit expression for the limiting distribution in \cref{theorem: continuous distribution arising from the three gap theorem} using geometry and dynamics.
\item In \cref{section: gap theorem for d-iets via graph theory}, we present a second proof of the $d+2$ Gap Theorem (\cref{theorem: d + 2 gap theorem}) using a purely combinatorial argument.
\end{itemize}

\subsection{Interval Exchange Transformations}
\label{subsection: interval exchange transformations}

Given $\lambda = (\lambda_1, \lambda_2, \cdots, \lambda_d) \in \RR^n$ with $\lambda_i \geq 0$ and $|\lambda| = \sum_{i=1}^d \lambda_i = 1$, and a permutation $\pi \in S_d$ on the $d$ letters $\{1, 2, \cdots, d\}$, we can define a map $T = T_{(\lambda, \pi)} : [0, 1) \to [0, 1)$ exchanging the intervals $I_i = [\sum_{k=1}^{i-1}\lambda_k,\sum_{k=1}^i \lambda_k)$ ($i=1, 2, \cdots, d$) according to the permutation $\pi$. That is, if $x \in I_i$, then \[T_{(\lambda, \pi)}(x) = x - \sum_{k < i}\lambda_k + \sum_{\pi(k) < \pi(i)}\lambda_k.\] We say that $T_{(\lambda, \pi)}$ is a \textbf{$d$-interval exchange transformation}, or \textbf{$d$-IET} for short, with \textbf{length data} $\lambda$ and \textbf{combinatorial data} $\pi$.

We say that a permutation $\pi \in S_d$ is \textbf{irreducible}, and denote that by $\pi \in S_d^o$, if $\pi(\{1, 2, \cdots, k\}) \neq \{1, 2, \cdots, k\}$ for every $k < d$.

The length data $\lambda$ can be parametrized by the unit simplex $\Delta^{d-1} := \{(t_1, t_2, \cdots, t_d) \in \RR^d \mid t_i \geq 0, \text{ and } \sum_{i=1}^d t_i = 1\}$. The unit simplex comes with the Lebesgue measure $\operatorname{Leb}_{\Delta^{d-1}}$, which makes it possible to talk about ``almost all $d$-IETs''.

Denote the discontinuities of $T^{-1}$ by $\alpha_0 = 0, \alpha_1, \cdots, \alpha_d = 1$, and those of $T$ by $\beta_0 = 0, \beta_1, \cdots, \beta_d = 1$. Note that the subintervals $(\alpha_{i-1}, \alpha_i)$ are permuted by $T^{-1}$, and get sent to the subintervals $(\beta_{\pi^{-1}(i) - 1}, \beta_{\pi^{-1}(i)})$.

The IET $T$ is said to satisfy the \textbf{infinite distinct orbit condition} (i.d.o.c for short) or \textbf{Keane's condition} if the orbits $\{(T^{-1})^n\alpha_1\}_{n=1}, \{(T^{-1})^n\alpha_2\}_{n=1}, \cdots, \{(T^{-1})^n\alpha_{d-1}\}_{n=1}$ are both infinite and pairwise distinct. The i.d.o.c property is the correct notion of irrationality for IETs since Keane \cite{Keane1975-ai} showed that an IET with i.d.o.c is minimal, and if an IET has rationally independent length data $\lambda$, then it satisfies the i.d.o.c.

For more on IETs, the interested reader can refer to the excellent survery \cite{Yoccoz2007-zz}.

\subsubsection{Gaps of IETs}
\label{subsubsection: gaps of iets}

Let $T = T_{(\pi, \lambda)} : [0, 1) \to [0, 1)$ be a $d$-interval exchange map with irreducible combinatorial data $\pi \in S_d^o$. For an integer $N \geq 1$, consider the orbit segment $\{T^n0\}_{n=0}^{N-1}$, ordered on $[0, 1)$ as \[0 = T^{\sigma_{T, N}(0)}0 < T^{\sigma_{T, N}(1)}0 < \cdots < T^{\sigma_{T, N}(N-1)}0 < 1\] with $\sigma_{T, N} : \{0, 1, \cdots, N-1\} \to \{0, 1, \cdots, N-1\}$ the permutation induced by the order the points arrange themselves on the interval.

We denote for $i = 0, 1, \cdots, N-1$ the $i$th gap for the orbit segment $(T^n0)_{n=0}^{N-1}$ by
\begin{equation*}
    \operatorname{gap}_{T, N}(i) :=
    \begin{cases}
    T^{\sigma_{T, N}(i + 1)}0 - T^{\sigma_{T, N}(i)}, & i \neq N - 1 \\
    1 - T^{\sigma_{T, N}(N-1)}0, & i = N - 1
    \end{cases}.
\end{equation*}
We also denote the set of gap lengths by 
\begin{equation*}
    \operatorname{Gaps}_{T, N} := \{\operatorname{gap}_{T, N}(i) \mid 0 \leq i < N\},
\end{equation*}
and the multiset of gaps by
\begin{equation*}
    \widetilde{\operatorname{Gaps}}_{T, N} := \{\!\{\operatorname{gap}_{T, N}(i) \mid 0 \leq i < N\}\!\}.
\end{equation*}
We identify the elements of the multiset of gaps $\widetilde{\operatorname{Gaps}}_{T, N}$ with the actual gaps. That is, the set of open intervals $\left\{(T^{\sigma_{T, N}(0)}0, T^{\sigma_{T, N}(1)}0), (T^{\sigma_{T, N}(1)}0, T^{\sigma_{T, N}(2)}0), \cdots, (T^{\sigma_{T, N}(N-1)}0, 1) \right\}$ bounded by the orbit points $\{T^n0\}_{n=0}^{N-1}$.

\subsection{Circle Rotation}

For $\alpha \in [0, 1)$, denote the circle rotation map $t \mapsto t + \alpha \mod 1$ by $R_\alpha$. The circle rotation is a $2$-IET with length data $\lambda = (1 - \alpha, \alpha)$, and combinatorial data $\pi = (1\ 2)$. Also, for $t \in \RR$, let $\{t\} = t - \lfloor t \rfloor$ denote the fractional part of $t$. The orbit sequence $\left(R_\alpha^n0\right)_{n=0}^\infty$ and Kronecker's sequence of fractional parts $\left(\{n\alpha\}\right)_{n=0}^\infty$ agree.

For $T = R_\alpha$, the permutation $\sigma_{R_\alpha, N}$, the set of gap lengths $\operatorname{Gaps}_{T, N}$, and the multiset of gaps $\widetilde{\operatorname{Gaps}_{T, N}}$ are very well understood as can be seen in \cref{subsubsection: the combinatorial structure of the gaps of circle rotations} and \cref{subsubsection: the statistics of the gaps of circle rotations}.

\subsubsection{The Combinatorial Structure of $\widetilde{\operatorname{Gaps}}_{R_\alpha, N}$}
\label{subsubsection: the combinatorial structure of the gaps of circle rotations}

It was observed by H. Steinhaus (1957) that any finite segment of the orbit sequence $\left(R_\alpha^n0\right)_{n=0}^{N-1}$ induces three gap lengths. The first proof of the \textbf{Steinhaus Conjecture} was given by V. S\'{o}s \cite{Sos1957-nu, Sos1958-em}. This result is now commonly known as the \textbf{Three Gap Theorem}.

\begin{theorem}[Three Gap Theorem]
\label{theorem: Three Gap Theorem}
For any $\alpha \in (0, 1)$, and any integer $N \geq 1$,
\begin{equation*}
    \#\operatorname{Gaps}_{R_\alpha, N} \leq 3
\end{equation*}
\end{theorem}
 
\begin{figure}
\centering
\begin{tikzpicture}[scale = 0.75]
\draw (0,0) circle (3);
\draw

node[circle, fill=blue, scale=0.5] at (0,3) {}
node at (0, 3.5) {$\scriptscriptstyle R_\alpha^0(0)$}

node[circle, fill=blue, scale=0.5] at (-2.89171, -0.798766) {}
node at (-3.37366, -0.931894) {$\scriptscriptstyle R_\alpha^1(0)$}

node[circle, fill=blue, scale=0.5] at (1.53987, -2.57465) {}
node at (1.79651, -3.00376) {$\scriptscriptstyle R_\alpha^2(0)$}

node[circle, fill=blue, scale=0.5] at (2.07171, 2.16979) {}
node at (2.417, 2.53143) {$\scriptscriptstyle R_\alpha^3(0)$}

node[circle, fill=blue, scale=0.5] at (-2.64307, 1.41921) {}
node at (-3.08359, 1.65575) {$\scriptscriptstyle R_\alpha^4(0)$}

node[circle, fill=blue, scale=0.5] at (-0.664248, -2.92554) {}
node at (-0.774955, -3.41313) {$\scriptscriptstyle R_\alpha^5(0)$}

node[circle, fill=blue, scale=0.5] at (2.99679, 0.13867) {}
node at (3.49626, 0.161782) {$\scriptscriptstyle R_\alpha^6(0)$}

node[circle, fill=blue, scale=0.5] at (-0.931577, 2.8517) {}
node at (-1.08684, 3.32698) {$\scriptscriptstyle R_\alpha^7(0)$}

node[circle, fill=blue, scale=0.5] at (-2.50072, -1.65723) {}
node at (-2.91751, -1.93343) {$\scriptscriptstyle R_\alpha^8(0)$}

node at (0.393103, -2.4689) {A}
node at (2.27511, 1.03627) {A}
node at (-1.60463, 1.91708) {A}
node at (-1.42063, -2.05713) {A}
node at (-2.48445, 0.278445) {A}
node at (0.929891, 2.32063) {A}

node at (-0.393045, 2.46891) {C}
node at (-2.27514, -1.03622) {C}

node at (2.20253, -1.18274) {B};
\end{tikzpicture}
\caption{The orbit segment $\{R_\alpha^n(0)\}_{n=0}^8$ for $\alpha = 1/\sqrt{2}$. In this case, $A = \frac{3 \sqrt{2}}{2} - 2$, $B = A + C = 3 - \frac{4 \sqrt{2}}{2}$, and 
$C = 5 - \frac{7 \sqrt{2}}{2}$.}
\end{figure}
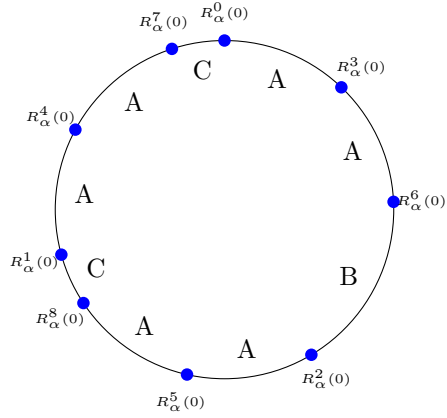

The list of mathematicians who subsequently proved the Three Gap Theorem includes S. Swierczkowski \cite{Swierczkowski1958-ji}, Sz\"{u}z, Erd\"{o}s, Turan, Chung \& Graham \cite{Chung1976-tu} among others. Recent work on the Three Gaps Theorem and how it relates to the spaces of lattices has been done by Marklof \& Strömbergsson in \cite{Marklof2016-tx}, and by Haynes \& Marklof in \cite{Haynes2017-xs}. A formal proof using the Coq proof assistant system has been published by Mayero \cite{Mayero2006-dy}. We present our proof in \cref{section: the three gap theorem and zippered rectangles}.

The different proofs usually come with a description of the gap combinatorial structure, giving the following theorem. (Check \cite{Polanco_undated-ts} for a proof.) We geometrically interpret and prove the same theorem (minus the permutation) using zippered rectangles in \cref{corollary: correspondence}, and \cref{corollary: Zippered Rectangle Decompositions of Tori} of this paper.

\begin{theorem}[Combinatorial Structure of $\widetilde{\operatorname{Gaps}}_{R_\alpha, N}$]
\label{theorem: combinatorial structure theorem}
Consider $\alpha \in (0, 1)$, and $N \geq 1$ an integer. Let $\mathcal{F}(N) = \{0 \leq \frac{a}{q} \leq 1 \mid \gcd(a, q) = 1,\, q \leq N\}$ be the Farey fractions of order $N$. If $\alpha = \frac{a}{q} \in \mathcal{F}(N)$, then $\widetilde{\operatorname{Gaps}}_{R_\alpha, N}$ has exactly $q$ elements, each of length $\frac{1}{q}$. Otherwise, there exists consecutive fractions $\frac{a_1}{q_1}$ and $\frac{a_2}{q_2}$ in $\mathcal{F}(N)$ such that $\frac{a_1}{q_1} < \alpha < \frac{a_2}{q_2}$. In that case, $\widetilde{\operatorname{Gaps}}_{R_\alpha, N}$ contains\footnote{The number of one of the gaps might be $0$. For instance, the number of $C$ gaps is zero on the Farey arc $\left(\frac{0}{1}, \frac{1}{N}\right)$.}
\begin{center}
\begin{tabular}{c c c}
$N - q_1$ gaps & & $A = q_1 \alpha - a_1 = \{q_1 \alpha\}$ \\
$q_1 + q_2 - N$ gaps & of length & $B = A + C$\\
$N - q_2$ gaps & & $C = a_2 - q_2 \alpha  = 1 - \{q_2 \alpha\}$
\end{tabular}
\end{center}

In the second case, the permutation $\sigma = \sigma_{\alpha, N} : \{0, 1, \cdots, N-1\} \to \{0, 1, \cdots, N-1\}$ giving the ordering \[0 = \{\sigma_{\alpha, N}(0) \alpha\} < \{\sigma_{\alpha, N}(1) \alpha\} < \cdots < \{\sigma_{\alpha, N}(N-1)\alpha\} < 1\] is recursively defined by
\[\sigma_0 = 0,\] and
\begin{equation*}
\sigma_{i+1} - \sigma_i =
\begin{cases}
q_1, & \text{if } \sigma_i \in [0, N-q_1) \text{ ($A$ gap)}\\
q_1 - q_2, & \text{if } \sigma_i \in [N-q_1, q_2) \text{ ($B$ gap)} \\
-q_2, & \text{if } \sigma_i \in [q_2, N) \text{ ($C$ gap)}
\end{cases}
.
\end{equation*}
\end{theorem}

Note that in the second case, when $\alpha \not\in \mathcal{F}(N)$, the elements belonging to the same Farey arc $\left(\frac{a_i}{q_i}, \frac{a_{i+1}}{q_{i+1}}\right)$ in $\mathcal{F}(N)$ have the same gap structure. The gap lengths vary linearly between the two end points of the arc. Finally, the elements of $\mathcal{F}(N)$ give the \emph{non-generic} gap structure, and are pretty much handled by modular arithmetic.

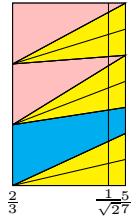
\begin{figure}
\begin{center}
\begin{tikzpicture}[scale = 1.5]
\draw (0, 0) -- (1, 0) -- (1, {(1 + sqrt(5))/2}) -- (0, {(1 + sqrt(5))/2}) -- (0, 0);
\draw[fill = yellow] (0, {0 * (1 + sqrt(5))/(2 * 3)}) -- (1, {0 * (1 + sqrt(5))/(2 * 7)}) -- (1, {2 * (1 + sqrt(5))/(2 * 7)}) -- (0, 0);
\draw[fill = cyan] (0, {0 * (1 + sqrt(5))/(2 * 3)}) -- (1, {2 * (1 + sqrt(5))/(2 * 7)}) -- (1, {3 * (1 + sqrt(5))/(2 * 7)}) -- (0, {1 * (1 + sqrt(5))/(2 * 3)}) -- (0, 0);
\draw[fill = yellow] (0, {1 * (1 + sqrt(5))/(2 * 3)}) -- (1, {3 * (1 + sqrt(5))/(2 * 7)}) -- (1, {5 * (1 + sqrt(5))/(2 * 7)}) -- (0, {1 * (1 + sqrt(5))/(2 * 3)});
\draw[fill=pink] (0, {1 * (1 + sqrt(5))/(2 * 3)}) -- (1, {5 * (1 + sqrt(5))/(2 * 7)}) -- (0, {2 * (1 + sqrt(5))/(2 * 3)}) -- (0, {1 * (1 + sqrt(5))/(2 * 3)});
\draw[fill = yellow] (0, {2 * (1 + sqrt(5))/(2 * 3)}) -- (1, {5 * (1 + sqrt(5))/(2 * 7)}) -- (1, {7 * (1 + sqrt(5))/(2 * 7)}) -- (0, {2 * (1 + sqrt(5))/(2 * 3)});
\draw[fill=pink] (0, {2 * (1 + sqrt(5))/(2 * 3)}) -- (1, {7 * (1 + sqrt(5))/(2 * 7)}) -- (0, {3 * (1 + sqrt(5))/(2 * 3)}) -- (0, {2 * (1 + sqrt(5))/(2 * 3)});
\draw (0, 0) -- (1, {(1 + sqrt(5))/(2 * 7)});
\draw (0, 0) -- (1, {2 * (1 + sqrt(5))/(2 * 7)});
\draw (0, {1 * (1 + sqrt(5))/(2 * 3)}) -- (1, {3 * (1 + sqrt(5))/(2 * 7)});
\draw (0, {1 * (1 + sqrt(5))/(2 * 3)}) -- (1, {4 * (1 + sqrt(5))/(2 * 7)});
\draw (0, {1 * (1 + sqrt(5))/(2 * 3)}) -- (1, {5 * (1 + sqrt(5))/(2 * 7)});
\draw (0, {2 * (1 + sqrt(5))/(2 * 3)}) -- (1, {5 * (1 + sqrt(5))/(2 * 7)});
\draw (0, {2 * (1 + sqrt(5))/(2 * 3)}) -- (1, {6 * (1 + sqrt(5))/(2 * 7)});
\draw (0, {2 * (1 + sqrt(5))/(2 * 3)}) -- (1, {7 * (1 + sqrt(5))/(2 * 7)});
\draw ({21 * (1/sqrt(2) - 2/3)}, 0) -- ({21 * (1/sqrt(2) - 2/3)}, {(1+sqrt(5))/2});
\draw node at (0, -0.15) {$\scriptstyle \frac{2}{3}$} node at ({21 * (1/sqrt(2) - 2/3)}, -0.15) {$\scriptstyle \frac{1}{\sqrt{2}}$} node at (1, -0.15) {$\scriptstyle \frac{5}{7}$};
\end{tikzpicture}
\end{center}
\caption{The Farey arc $(2/3, 5/7)$, with $A$ gaps in \textcolor{yellow}{yellow}, $B$ gaps in \textcolor{cyan}{cyan}, and $C$ gaps in \textcolor{pink}{pink}.}
\end{figure}

In the following theorem, we generalize the Three Gap Theorem to general $d$-IETs.

\begin{theorem}[$d+2$ Gap Theorem]
\label{theorem: d + 2 gap theorem}
Let $T$ be a $d$-IET with combinatorial data $\pi$ satisfying the i.d.o.c. For any integer $N\geq 1$,
\begin{itemize}
\item if $\pi^{-1}(\pi(1) - 1) = d$, then $\# \operatorname{Gaps}_{T, N} \leq d+1$, and 
\item if $\pi^{-1}(\pi(1) - 1) \neq d$, then $\# \operatorname{Gaps}_{T, N} \leq d+2$.
\end{itemize}
\end{theorem}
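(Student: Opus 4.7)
My proof plan is to adapt the pre-image/trace-back argument used in classical proofs of the Three Gap Theorem (such as those of S\'{o}s and Sur\'{a}nyi) to the IET setting. Write $\mathcal{P}_N$ for the partition of $[0,1]$ induced by $\{T^k 0 : 0 \leq k < N\} \cup \{1\}$. The key dynamical fact I would exploit is that $T^{-1}$ is a piecewise isometry on $[0,1)$ with $d-1$ interior discontinuities $\alpha_1, \ldots, \alpha_{d-1}$: any open interval avoiding all $\alpha_j$ in its interior is mapped by $T^{-1}$ to an interval of the same length.

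Starting from a gap $g$ of $\mathcal{P}_N$, I would iterate $T^{-1}$ as long as the resulting interval does not contain any $\alpha_j$ in its interior. Each such step is an isometry on the interval, so the length of $g$ is preserved throughout. Since the endpoints of $g$ lie in $\{T^k 0 : 0 \leq k < N\}\cup\{1\}$, each iteration shifts endpoints backward along the orbit (replacing $T^k 0$ by $T^{k-1}0$), so the procedure must terminate in at most $N$ steps. It terminates when either (a) the current preimage contains some $\alpha_j$ in its interior, or (b) one of its endpoints coincides with $0$ (so further backward iteration of that endpoint would leave $\{T^k 0\}_{k\geq 0}$), or (c) an analogous boundary configuration at $1$ is reached.

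I would then classify the terminal intervals by their \emph{anchor type}: which $\alpha_j$ marks the termination together with a side, or which boundary configuration is reached at $0$ or $1$. A careful enumeration should give at most $d-1$ interior anchor types (one per $\alpha_j$) plus a bounded number of boundary anchor types associated with $0$ and $1$, totalling at most $d+2$ possible lengths. The hypothesis $\pi^{-1}(\pi(1)-1) = d$ is equivalent to $\lim_{x\to 1^-}T(x) = T(0)$, i.e.\ to $T$ descending to a continuous map of the circle $\mathbb{R}/\mathbb{Z}$; under this hypothesis the two boundary anchor types near $0$ and near $1$ merge into a single type, improving the bound to $d+1$.

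The main obstacle will be formalizing the anchor type classification and showing that each anchor actually contributes at most one length to $\operatorname{Gaps}_{T,N}$, rather than a family of lengths parametrized by where along the orbit the trace-back halts. The subtle point is the dual role of the orbit point $0$ --- it is both the seed of the orbit and an endpoint of $[0,1)$ --- so the interaction between the backward orbit structure and the discontinuities of $T^{\pm 1}$ at $0$ must be tracked carefully; the hypothesis $\pi^{-1}(\pi(1)-1) = d$ should be exactly the combinatorial condition under which this interaction collapses.
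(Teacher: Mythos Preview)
Your trace-back strategy is essentially the paper's combinatorial proof (Appendix~B): apply $T^{-1}$ to each gap and follow it until it ``splits'' or reaches a terminal slot, then count the distinct lengths that can arise. Your interpretation of the condition $\pi^{-1}(\pi(1)-1)=d$ as $\lim_{x\to 1^-}T(x)=T(0)$ (so that $T$ descends to the circle) is also correct and matches the paper's identification of when the last-gap length $L_d$ is already accounted for.

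Where your outline is imprecise is exactly the place you flag as the main obstacle, and the paper's bookkeeping differs from yours in two respects. First, one of your $d-1$ ``interior anchors'' never occurs: since $T(0)=\alpha_{i_0}$ with $i_0=\pi(1)-1$, this particular discontinuity of $T^{-1}$ is always an orbit point and hence a gap \emph{endpoint}, never an interior point; the two gaps adjacent to it contribute two separate lengths $R_0$ and $L_{\pi^{-1}(i_0)}$ rather than a single ``$\alpha_{i_0}$-anchor'' length. Second, you omit the \emph{ghost point} $T^N 0$: this point lies in the interior of exactly one gap, and that gap contributes one additional length (the sum of the two adjacent gaps it traces back to). The paper's proof organizes the count by six cases depending on where $T^N 0$ falls relative to the $\alpha_j$'s and to the first and last gaps; in the generic Case~I the tally is $2$ (from the two gaps at $\alpha_{i_0}$) $+\,(d-2)$ (from the remaining $\alpha_j$) $+\,1$ (from the $T^N 0$ gap) $+\,(0$ or $1)$ (from the last gap, depending on whether $\pi^{-1}(i_0)=d$), giving $d+1$ or $d+2$. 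Your proposed count of ``$d-1$ interior $+$ boundary'' reaches the same totals but for partially wrong reasons; making the argument rigorous requires tracking $T^N 0$ explicitly.

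The paper also gives an independent geometric proof (Section~4): it realizes $T$ as the first-return map of the horizontal flow on a translation surface $S_T$ and reads off the gap count from the number of rectangles in a zippered-rectangle decomposition over a horizontal segment of length $N$, invoking the standard bound $d_S+2$ (or $d_S+1$) for first-return IETs on a transversal. That route avoids the case analysis entirely but requires the translation-surface machinery.
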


The bounds in theorem \cref{theorem: d + 2 gap theorem} are \emph{optimal}, improving a previous $3(d-1)$ bound that could be obtained as a consequence of Boshernitzan graph theoritic approach in \cite{Boshernitzan1985-fb} which we present in \cref{subsection: the Rauzy graph approach}. We give \emph{two} proofs of the above gap theorem: In \cref{section: gap theorem for d-iets via zippered rectangles}, we show that our zippered rectangles proof of the Three Gap Theorem extends naturally to suspensions over IETs, thus proving \cref{theorem: d + 2 gap theorem}. In \cref{section: gap theorem for d-iets via graph theory}, we show how the Rauzy graph approach can be modified to prove \cref{theorem: d + 2 gap theorem}.

\subsubsection{The Statistics of $\widetilde{\operatorname{Gaps}}_{\alpha, N}$}
\label{subsubsection: the statistics of the gaps of circle rotations}

Fix $\alpha \in (0,1)$, and $N \geq 1$ an integer. The expected value of the gap lengths is \[\frac{1}{N} \sum_{i=0}^{N-1} \operatorname{gap}_{R_\alpha, N}(i) = \frac{1}{N}.\] Let
\begin{eqnarray*}
\mathcal{G}_{R_\alpha, N}(z) &:=& \# \{\ell \in \widetilde{\operatorname{Gaps}}_{R_\alpha, N} \mid \ell \geq \frac{z}{N}\} \\
&=& \#\{L \in N \cdot \widetilde{\operatorname{Gaps}}_{R_\alpha, N} \mid L \geq z\}
\end{eqnarray*}
denote the number of \textbf{normalized gap lengths} greater than or equal to $z$.

Now, for $0 < a < b < 1$, consider the \textbf{average gap distribution}
\[g_{R_\cdot}^{[a, b]}(z; N) := \frac{1}{b-a} \int_a^b \frac{\mathcal{G}_{R_\alpha, N}(z)}{N}\, d\alpha.\] It is proved in \cite{Polanco_undated-ts} that $g_{R_\cdot}^{[a, b]}(z; N)$ has a limit.

\begin{theorem}[\cite{Polanco_undated-ts}]
\label{theorem: continuous distribution arising from the three gap theorem}
As $N \to \infty$, the limit $\lim_{N \to \infty} g_{R_\cdot}^{[a, b]}(z; N)$ exists, is independent of $[a, b]$, and is equal to
\begin{equation*}
    g(z)
    = \frac{6}{\pi^2} \scriptsize
        \begin{cases}
        \frac{\pi^2}{6} - z & 0 < z < 1 \\
        \begin{aligned}
        \log^2(2) - \frac{2\pi^2}{3} - 1 + \left(\frac{z}{2} - \frac{2}{z}\right)\log\left(\frac{2-z}{z - 1}\right) + \frac{3z}{2}\log\left(\frac{z}{z - 1}\right) \\
        -\log\left(\frac{4}{z}\right)\log(z) + 4\Li_2(\frac{1}{z}) + 2\Li_2(\frac{z}{2})
        \end{aligned} & 1 < z < 2 \\
        -1 + \left(\frac{z}{2} - \frac{2}{z}\right) \log \left(\frac{z - 2}{z - 1}\right) + \frac{3}{2} z \log \left(\frac{z}{z - 1}\right) + 4 \Li_2\left(\frac{1}{z}\right) - 2 \Li_2\left(\frac{2}{z}\right) & 2 < z
        \end{cases}
\end{equation*}
where the Dilogarithm is defined for $|z| \leq 1$ by \[\Li_2(z) := \sum_{n=1}^\infty \frac{z^n}{n^2}.\]
\end{theorem}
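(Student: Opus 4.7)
The plan is to use the combinatorial content of \cref{theorem: three gap theorem} to rewrite the integrand on each Farey arc, invoke classical equidistribution of Farey arcs to pass to a fixed integral over a triangle, and evaluate that integral explicitly by cases in $z$.

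First, I would decompose $(a,b)$ into Farey arcs. For two consecutive Farey fractions $\frac{a_1}{q_1} < \frac{a_2}{q_2}$ in $\mathcal{F}(N)$, the open arc has length $\frac{1}{q_1 q_2}$, and the parametrization $\alpha = \frac{a_1}{q_1} + \frac{u}{q_1 q_2}$ with $u \in (0,1)$ together with $x_i := q_i/N$ turns \cref{theorem: three gap theorem} into the statement that the normalized gaps are $NA = u/x_2$, $NC = (1-u)/x_1$, $NB = NA+NC$, with multiplicities $N(1-x_1)$, $N(1-x_2)$, $N(x_1+x_2-1)$ respectively. Setting
\[ F_z(x_1,x_2,u) := (1-x_1)\mathbf{1}_{\{u \geq zx_2\}} + (x_1+x_2-1)\mathbf{1}_{\{u/x_2+(1-u)/x_1 \geq z\}} + (1-x_2)\mathbf{1}_{\{u \leq 1-zx_1\}}, \]
one has $\mathcal{G}_{R_\alpha,N}(z)/N = F_z(x_1,x_2,u)$ on each arc; integrating across each arc and summing (with $O(1/N)$ error from the at most two boundary arcs) writes $g_{R_\cdot}^{[a,b]}(z;N)$ as an arc-length-weighted average of $\int_0^1 F_z\,du$ over Farey arcs contained in $[a,b]$.

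Next, I would invoke the classical equidistribution of Farey arcs (cf.\ Hall; Boca--Cobeli--Zaharescu): weighted by arc length, the empirical distribution of $(q_1/N,q_2/N)$ over consecutive Farey fractions in any fixed subinterval of $(0,1)$ converges weak-$\ast$ to the probability measure $d\mu(x_1,x_2) = \frac{6}{\pi^2}\,\frac{dx_1\,dx_2}{x_1 x_2}$ supported on the triangle $T = \{(x_1,x_2) \in (0,1]^2 : x_1+x_2 > 1\}$ (a probability measure by $\iint_T \frac{dx_1\,dx_2}{x_1 x_2} = \zeta(2) = \pi^2/6$), with the within-arc coordinate $u$ uniform on $[0,1]$ and independent in the limit. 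Since $\mu$ does not depend on the subinterval, the claimed independence of $[a,b]$ is automatic, and one arrives at
\[ \lim_{N\to\infty} g_{R_\cdot}^{[a,b]}(z;N) = \frac{6}{\pi^2} \iint_T \frac{1}{x_1 x_2} \int_0^1 F_z(x_1,x_2,u)\,du\,dx_1\,dx_2. \]

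Finally, I would compute this integral explicitly. The symmetry $(x_1,u) \leftrightarrow (x_2,1-u)$ identifies the $A$-gap and $C$-gap contributions, halving the bookkeeping; the $u$-integrals are elementary (each indicator yields $(1-zx_2)_+$, $(1-zx_1)_+$, or a piecewise linear function of $x_1,x_2,z$). What remains is an $(x_1,x_2)$-integral of piecewise linear functions against $1/(x_1 x_2)$ over subregions of $T$, split naturally into the three regimes $0<z<1$, $1<z<2$, $z>2$ determined by where the thresholds $zx_1=1$, $zx_2=1$, and the $B$-gap boundary cut $T$. The antiderivative $\int \frac{-\log(1-t)}{t}\,dt = \Li_2(t)$ (together with elementary logarithms) generates the dilogarithmic terms displayed in the theorem. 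The $0<z<1$ regime collapses cleanly to $\frac{6}{\pi^2}(\frac{\pi^2}{6} - z)$, matching the stated formula and serving as a useful sanity check. The main obstacle will be the $1<z<2$ regime, where the $B$-gap region cuts across the hypotenuse $x_1+x_2=1$ of $T$: matching the resulting combination of dilogarithms to the compact form displayed requires the reflection identity $\Li_2(w) + \Li_2(1-w) = \frac{\pi^2}{6} - \log(w)\log(1-w)$ and patient algebraic bookkeeping to collect the various $\log$ and $\Li_2$ terms into the final shape.
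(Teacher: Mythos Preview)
Your proposal is correct and follows essentially the same route the paper itself outlines (in Appendix~A, summarizing \cite{Polanco_undated-ts}): parametrize each Farey arc by $\alpha = a_1/q_1 + u/(q_1q_2)$, read off the normalized gap lengths and multiplicities from \cref{theorem: three gap theorem} as functions of $(q_1/N,q_2/N,u)$, and then pass to the limit using the Boca--Cobeli--Zaharescu equidistribution of consecutive Farey denominators (your ``equidistribution of Farey arcs'' with limit density $\tfrac{6}{\pi^2}\tfrac{dx_1\,dx_2}{x_1x_2}$ on $\{x_1+x_2>1\}$). The paper stops at the integral representation and cites \cite{Polanco_undated-ts} for the explicit dilogarithmic evaluation, whereas you also sketch that final computation; your plan for it (symmetry $(x_1,u)\leftrightarrow(x_2,1-u)$, piecewise-linear $u$-integrals, the three $z$-regimes, and the reflection identity for $\Li_2$) is sound.
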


In \cref{section: the three gap theorem and zippered rectangles}, we interpret the average gap distribution $g_{R_\cdot}^{[a, b]}(z; N)$ as the average height of zippered rectangles, and prove the existence of the limit as $N \to \infty$ using the equidistribution of large horocycles. In \cref{section: primitive lattice points}, we derive the explicit distribution using limiting distributions of primitive lattice points. 

The approach from \cref{section: the three gap theorem and zippered rectangles} easily extends to prove the following theorem. (Refer to \cref{subsection: the space of unimodular lattices} for the definitions of $X_2$ and $\mu_2$, and to \cref{subsection: notation}, and \cref{subsection: gap distribution for circle rotations} for the definitions of $f_z$ and $\mathfrak{Z}$.)

\begin{theorem}
\label{theorem: distribution for TGT}
The average gap distribution $g_{R_\cdot}^{[a, b]}(z; N)$ has a limiting distribution
\begin{equation*}
    \lim_{N \to \infty} g_{R_\cdot}^{[a, b]}(z; N) = \int_{X_2} f_z(\mathfrak{Z}(\Lambda))\, d\mu_2(\Lambda).
\end{equation*} That is, $g_{R_\cdot}^{[a, b]}(z; N)$ in the limit is equal to the distribution of the heights of the zippered rectangle decomposition of unit area tori.
\end{theorem}

In \cref{subsection: gap distribution for IETs}, we show how the same approach can be axiomatized to cover compositions of a fixed IET and circle rotations. (Refer to \cref{subsection: gap distribution for IETs} for the definition of $g_{T \circ R}^{[a, b]}(z; N)$, to \cref{subsection: the space of unimodular lattices} for the definitions of $g_t$ and $h_\cdot$, and \cref{subsection: the surface S_T defined by IET T} for the definition of $S_T$, and to \cref{subsection: notation}, \cref{subsection: gap distribution for circle rotations}, and \cref{subsection: gap distribution for IETs} for the definitions of $f_z$ and $\mathfrak{Z}_\text{canon}$.)

\begin{theorem}
\label{theorem: axiomatic theorem}
Let $T$ be an interval exchange transformation, with $S_T \in \mathcal{H}$. If an equidistribution result of large horocycles result implying $(g_t)_\ast(\pi_{S_T})(h_\cdot)_\ast\operatorname{Unif}_{[a, b]} \rightharpoonup \mu$, with $\mu$ a measure on $\mathcal{H}$, holds true, then the average gap distribution $g_{T \circ R}^{[a, b]}(z; N)$ has a limiting distribution
\begin{equation*}
    \lim_{N \to \infty} g_{T \circ R_\cdot}^{[a, b]}(z; N) = \int_\mathcal{H} f_z(\mathfrak{Z}_\text{canon}(S))\,d\mu(S).
\end{equation*}
\end{theorem}

\section{Background}
\label{section: background}

\subsection{Linear Flow on Translation Surfaces}
\label{subsection: linear flow on translation surfaces}

The relationship between linear flows on tori (and translation surfaces in general) and interval exchange transformations is widely known.

\begin{proposition}
Let $S$ be an open bounded geodesic segment in a translation surface. The first return map of a linear flow on the translation surface to $S$ is an interval exchange map.
\end{proposition}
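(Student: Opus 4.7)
The plan is to work in flat charts and reduce to a flow-box argument on finitely many subintervals. First I would rotate the translation structure so that the linear flow is vertical. Since $S$ is a bounded geodesic segment, in the flat metric it is a straight segment; I may assume $S$ is transverse to the vertical direction (if $S$ were itself vertical, it would be a union of flow lines and the return map would not be a well-defined interval map). Parametrizing $S$ by arc length then identifies it with an interval $(0, L) \subset \mathbb{R}$.

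Next I would isolate the finite set $D \subset S$ where the first return map can fail to be continuous. Let $\Sigma$ be the (finite) set of cone-type singularities of the surface. For each $p \in \Sigma$ and each of the finitely many local downward separatrices at $p$, trace the vertical trajectory backward from $p$ until it either first meets $S$ or hits another singularity. Each such first meeting contributes one point of $D$. Because the total number of singular separatrices is finite, so is $D$; thus $S \setminus D$ decomposes as a disjoint union $I_1 \sqcup \cdots \sqcup I_d$ of open subintervals.

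The core claim is then that on each $I_j$ the first return map $T$ acts by a translation. For any $x \in I_j$, the forward trajectory up to the first return time avoids $\Sigma$ by the construction of $D$, so a tubular neighborhood of this trajectory can be covered by a single flat rectangle in the atlas; all points in a small open neighborhood $U \subset I_j$ of $x$ then return to $S$ after the same time and by the same horizontal displacement, so $T|_U$ is a translation. Connectedness of $I_j$ promotes this to a single translation $T|_{I_j} : x \mapsto x + c_j$. Because the vertical flow preserves the area form, $T$ preserves the induced transverse (arc length) measure on $S$, so the translates $T(I_j)$ are pairwise disjoint and their closures cover $S$ up to a finite set; this is precisely the data of a $d$-IET with length vector $(|I_j|)_j$ and the permutation read off from the order in which the pieces reappear in $S$.

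The main obstacle I expect is establishing the finiteness of $D$ cleanly and confirming that the return map really is defined off of $D$. Finiteness reduces to the combinatorial fact that the total number of separatrices emanating from $\Sigma$ is bounded (for instance by $\sum_{p \in \Sigma} 2 k_p$, where $2\pi k_p$ is the cone angle at $p$), together with the observation that distinct separatrices contribute distinct first hit points on $S$; definedness of $T$ off of $D$ follows from Poincaré recurrence applied to the finite area flow on the surface minus $\Sigma$. With these two points in hand, the flow-box argument above identifies $T$ as a $d$-IET.
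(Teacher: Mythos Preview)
The paper does not actually prove this proposition; it is presented as a ``folklore result'' in \S\ref{subsection: linear flow on translation surfaces} and the reader is directed to the surveys \cite{Yoccoz2007-zz,Zorich2006-of} for details. Your outline is essentially the standard argument and is largely sound, but there is one genuine omission.

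Your set $D$ of potential discontinuities only accounts for the backward separatrices issuing from the cone points $\Sigma$. You must also include the backward orbits, under the flow, of the two \emph{endpoints} of $\overline{S}$. If $x \in S$ is such that its forward trajectory grazes an endpoint of $S$ before returning, then nearby points on one side return at that moment while nearby points on the other side sail past and return later; the return time (and hence the translation constant) jumps at $x$ even though no singularity is involved. This is exactly the phenomenon quantified in \cref{proposition: d interval theorem}, where the interval count goes up by one or two according to whether the endpoints of the transversal lie on singular leaves. With $D$ enlarged by these at most two extra points, your flow-box argument on each $I_j$ is correct.

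Two smaller remarks. First, Poincar\'e recurrence only guarantees return for almost every point; to see that every $x \in S \setminus D$ returns in finite time, argue instead that the immersed flow rectangle over $I_j$ has area $|I_j|$ times the (locally constant) return time, and this area is bounded by the total area of the surface, so the return time is uniformly bounded on $I_j$. Second, the trajectory from $x$ to $T(x)$ generally cannot be covered by a \emph{single} flat chart; one covers it by a finite chain of flow boxes, or equivalently immerses a long thin rectangle, which is enough for the local-constancy conclusion.
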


In particular, the following proposition is true. (The author has not been able to find an appearance of this in the literature preceeding \cite{Arnold2007-kz}).

\begin{proposition}[Three Interval Theorem]
\label{proposition: Three Interval Theorem}
The first return map defined by a minimal linear flow on a torus to a transversal to the flow gives an interval exchange map with $2$ or $3$ intervals.
\end{proposition}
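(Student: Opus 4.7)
The plan is to directly count the discontinuities of the first return map $T$ on a transversal $S$ of a minimal linear flow $\phi_t$ on $\mathbb{T}^2$. Writing the torus as a flat quotient $\RR^2/\Lambda$ and letting $v$ denote the flow direction, $S$ is a geodesic segment transverse to $v$ with two endpoints $a, b \notin S$. Since $\phi_t$ is a rigid translation and the torus is flat and singularity-free, a short computation in the universal cover shows that the return time function $\tau : S \to (0, \infty)$ is locally constant on its continuity set, and $T$ restricts to a rigid translation on each continuity interval. The problem therefore reduces to bounding the number of discontinuities of $T$.

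The key step is to argue that every discontinuity of $T$ comes from the backward orbit of one of the two endpoints. A point $x_0 \in S$ is a discontinuity exactly when the forward orbit of $x_0$ first meets $\overline{S}$ at a boundary point rather than an interior point, since any interior first hit persists under small perturbations of $x_0$ by transversality. Conversely, for each $p \in \{a, b\}$ minimality of $\phi_t$ guarantees a smallest positive time $t_p$ with $y_p := \phi_{-t_p}(p) \in \overline{S}$, and $y_p$ is the unique candidate in $S$ whose forward orbit can first strike $p$. If $y_p$ lies in the interior of $S$ it contributes one discontinuity of $T$; if $y_p$ coincides with the opposite endpoint, no discontinuity is produced in $S$. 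Hence $T$ has at most two discontinuities, partitioning $S$ into at most three subintervals on each of which $T$ is a translation.

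Together with the a.e.\ bijectivity of $T$ inherited from volume-preservation of $\phi_t$, this exhibits $T$ as an IET with at most three intervals. Minimality of $\phi_t$ rules out the trivial $1$-IET case (which would force every orbit of $\phi_t$ to close), leaving $2$ or $3$ as the only possibilities; the $2$-interval case arises precisely when some $y_p$ coincides with the other endpoint. The main technical hurdle is the bookkeeping in the second step, where one must rigorously rule out spurious discontinuities and handle the degenerate endpoint-to-endpoint case. Everything else follows from the translation symmetry of $\phi_t$ and the recurrence supplied by minimality.
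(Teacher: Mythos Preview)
Your proposal is correct and is essentially the standard argument. Note, however, that the paper does not actually supply a proof of this proposition: it is stated as a background/folklore result (with a reference to Arnol'd) and is presented as the torus case of the more general \cref{proposition: d interval theorem} for arbitrary translation surfaces, whose proof is likewise deferred to \cite{Yoccoz2007-zz} and \cite{Zorich2006-of}. Your direct count of discontinuities---observing that on the flat torus there are no cone points, so the only possible discontinuities of the return map are the first backward hits of the two endpoints of the transversal---is exactly the specialization of that general mechanism to genus one with no singularities. The paper's framing via \cref{proposition: d interval theorem} would phrase this as: $d_S$ intervals plus at most one extra interval for each endpoint whose forward/backward trajectory misses the (empty) singular set, giving at most $d_S+2$; on the torus this collapses to your endpoint analysis. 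Your explicit elimination of the $1$-IET case via minimality is a detail the paper omits but which is indeed needed for the ``$2$ or $3$'' conclusion rather than ``at most $3$''.
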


In \cref{section: the three gap theorem and zippered rectangles}, we show that the above proposition is equivalent to the Three Gap Theorem (\cref{theorem: Three Gap Theorem}), and thus provides a geometric proof of the aforementioned result.

The same phenomenon is true for general translation surfaces. (Check \cite{Yoccoz2007-zz} or \cite{Zorich2006-of}.)

\begin{proposition}
\label{proposition: number of discontinuities of first return map}
Let $S$ be a translation surface, and denote its genus and number of singularities by $g$ and $s$ respectively. Write $d_S = 2g + s - 1$. Then the first return map of a minimal linear flow on $S$ to a transversal $X$ is an interval exchange interval $T$ on 
\begin{itemize}
\item $d_T = d_S$ intervals if the forward or backward trajectories of both end points of $X$ under the flow hit singularities,
\item $d_T = d_S + 1$ intervals if the forward or backward trajectories of exactly one of the end points of $X$ under the flow hits a singularity
\item $d_T = d_S + 2$ intervals if the forward or backward trajectories of neither end point of $X$ under the flow hits a  singularity
\end{itemize}
\end{proposition}

In \cref{section: gap theorem for d-iets via zippered rectangles}, we show that this proposition implies the $d+2$ Gap Theorem (\cref{theorem: d + 2 gap theorem}).

\subsection{Zippered Rectangles Decomposition}
\label{subsection: zippered rectangles decomposition}

Consider a translation surface $S$, a minimal linear flow $\phi$ on $S$, and a transversal $X$ to $\phi$. As per \cref{proposition: number of discontinuities of first return map}, the first return map of $\phi$ to $X$ is an interval exchange map $T$ on $d_T$ subintervals of $X$. It can be shown that the return times of $\phi$ are constant on each of those $d_T$ subintervals. (Check \cite{Yoccoz2007-zz}.) This decomposes the surface $S$ into $d_T$ \textbf{zippered rectangles} whose widths are the lengths $\lambda = (\lambda_1, \lambda_2, \cdots, \lambda_{d_T})$ associated with $T$, and whose heights $h = (h_1, h_2, \cdots, h_{d_T})$ are the return times of $\phi$ to $X$ on each of the $d_T$ subintervals of $X$. The components of the vector $(h, \lambda)$ will be referred to as the \textbf{height-width parameters} of the \textbf{zippered rectangle decomposition} of $S$ with respect to the suspension of $\phi$ over $X$.\footnote{Veech's zippered rectangle construction involve a third parameter: the altitudes of singularities $a$, in addition to the lengths $\lambda$ and heights $h$. For this paper, the altitudes do not serve a significant purpose, and we choose to ignore them.}

\subsection{The Space of Unimodular Lattices}
\label{subsection: the space of unimodular lattices}

Let $X_2$ denote the modular surface $X_2 = \operatorname{SL}_2 \RR / \operatorname{SL}_2 \ZZ$. For any $A \in \operatorname{SL}_2\RR$, the coset $A\cdot\operatorname{SL}_2\RR$, the unimodular lattice $A\cdot\ZZ^2$, and the unit area torus $\RR^2/A\cdot\ZZ^2$ can be identified.

The space $X_2$ inherits an $\operatorname{SL}_2\RR$ action given by left translation/multiplication, along with a projection map $\pi_{\Lambda} : \operatorname{SL}_2\RR \to X_2$ is defined by $A \mapsto A \cdot \Lambda$ for every $\Lambda \in \operatorname{SL}_2\RR$. This gives rise to two important flows on $X_2$: the \textbf{horocycle flow} given by the matrices $h_s = \begin{pmatrix} 1 & 0 \\ -s & 1\end{pmatrix}$, for $s \in \RR$, and the \textbf{geodesic flow} given by the matrices $g_t := \begin{pmatrix} e^{-t} & 0 \\ 0 & e^t \end{pmatrix}$, for $t \in \RR$. In \cref{section: primitive lattice points}, it will be more convenient to work with the scaling matrices  $s_r = \begin{pmatrix} r & 0 \\ 0 & r^{-1} \end{pmatrix}$, with $r > 0$.

The space $X_2$ also comes with an $\operatorname{SL}_2 \RR$-invariant Haar measure which is usually denoted by $\mu_2$. The following is true about the convergence of measures on horocycles under translation by the geoedisic flow.

\begin{theorem}[Equidistribution of Large Horocycles \cite{Eskin1993-ea, Hejhal1996-vo, Weiss2007-dg, Kleinbock2012-oy}]
\label{theorem: equidistribution of large horocycles}
For any finite interval $I = [a,b] \subset \RR$, any bounded continuous function $f : X_2 \to \RR$, and any fixed $\Lambda \in X_2$
\begin{equation*}
\lim_{t \to \infty} \frac{1}{b-a} \int_{\RR} f(g_t h_\alpha \cdot \Lambda)\, d\alpha = \int_{X_2} f\, d\mu_2.
\end{equation*}
\end{theorem}

\section{The Three Gap Theorem and Zippered Rectangles}
\label{section: the three gap theorem and zippered rectangles}

In this section, we relate gaps in orbits of circle rotations to zippered rectangle decompositions of unit area tori, and prove \cref{theorem: Three Gap Theorem}, and \cref{theorem: distribution for TGT}. In \cref{section: primitive lattice points}, we relate zippered rectangle decompositions to primitive lattice points, and develop the machinery needed to indepently derive the gap distribution in \cref{theorem: continuous distribution arising from the three gap theorem}. In \cref{section: gap theorem for d-iets via zippered rectangles}, we show how the same approach can be used to prove \cref{theorem: d + 2 gap theorem}, and \cref{theorem: axiomatic theorem}.

\subsection{Proof of Theorem \ref{theorem: Three Gap Theorem}}

\begin{corollary}
\label{corollary: correspondence}
For any $\alpha \in (0, 1)$, and any integer $N \geq 1$, the pairs
\begin{equation*}
    (\text{gap length}, \text{number of gaps of that length})
\end{equation*}
describing the multiset $\widetilde{\operatorname{Gaps}}_{R_\alpha, N}$ correspond to the height-width parameters $(h, \lambda)$ of the zippered rectangle decomposition of the torus $h_\alpha \cdot \mathbb{T}^2$ over any horizontal segment of length $N$.
\end{corollary}

\begin{proof}
For any $\alpha \in (0, 1)$, the circle rotation $R_\alpha$ can be identified with the return map of the horizontal linear flow $\phi_0^t(x, y) = (x + t, y) \mod 1$ to the unit length vertical $V_{h_\alpha \cdot \mathbb{T}^2}^1 = \{0\} \times [0, 1] \subseteq \mathbb{T}^2$. Denoting the horizontal line segment $\phi_0^{[0, N]}(0, 0)$ of length $N$ by $H_{h_\alpha \cdot \mathbb{T}^2}^{N}$, we can write
\begin{equation*}
    H_{h_\alpha \cdot \mathbb{T}^2}^{N} \cap V_{h_\alpha \cdot \mathbb{T}^2}^1 = \{(0, R_\alpha(0)^n)\}_{n=0}^{N}.
\end{equation*}

The points $\{(0, R_\alpha^n(0))\}_{n=0}^{N}$ order themselves around the closed vertical segment $V_{h_\alpha \cdot \mathbb{T}^2}^1$ the same way the points $\{R_\alpha^n(0)\}_{n=0}^{N}$ order themselves on the interval $[0, 1)$. Moreover, the return time of $\phi_0^t$ to $V_{h_\alpha \cdot \mathbb{T}^2}^1$ is exactly $1$. This implies that $H_{h_\alpha \cdot \mathbb{T}^2}^{N} \cup V_{h_\alpha \cdot \mathbb{T}^2}$ divides $h_\alpha \cdot \mathbb{T}^2$ into $N$ rectangles with unit widths, and whose heights are the gap lengths of $\{R_\alpha^n(0)\}_{n=0}^{N-1}$. The sought for corrspondence immediately follows for $H_{h_\alpha \cdot \mathbb{T}^2}^{N}$, and for any horizontal segment of length $N$ by the translation invariance of $h_\alpha \cdot \mathbb{T}^2$.
\end{proof}

As a consequence of this corollary, we get the following equivalence.

\begin{corollary}
The Three Gap Theorem and the Three Interval Theorem (\cref{proposition: Three Interval Theorem}) are equivalent.
\end{corollary}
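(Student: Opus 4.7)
The plan is to deduce this equivalence directly from the Three Height Theorem, which serves as a dictionary translating the orbital data of a rotation into the geometric data of a zippered rectangle decomposition.

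First I would read off, for every $(\alpha, N)$, the identity
\begin{equation*}
\#\operatorname{Gaps}_{R_\alpha, N} \;=\; \#\{\text{rectangles in the zippered decomposition}\} \;=\; \#\{\text{subintervals of the first-return IET}\}.
\end{equation*}
The first equality is the content of \cref{theorem: three height theorem}: it pairs each element of the multiset $\widetilde{\operatorname{Gaps}}_{\alpha, N}$ (viewed as a (gap length, multiplicity) pair) with a single rectangle in the decomposition of $h_\alpha \cdot \mathbb{T}^2$ over a horizontal segment of length $N$. The second equality is built into the construction of the zippered rectangle decomposition recalled in \cref{subsection: zippered rectangles decomposition}, where there is exactly one rectangle per subinterval of the IET.

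With this identity in hand, the forward implication (Three Intervals $\Rightarrow$ Three Gap) is immediate: \cref{proposition: d interval theorem} bounds the right-hand side by $3$ for every $(\alpha, N)$, hence $\#\operatorname{Gaps}_{R_\alpha, N} \leq 3$. For the reverse implication, I would observe that any minimal linear flow on a torus, together with a chosen transversal, is affinely equivalent via an element of $\operatorname{SL}_2 \mathbb{R}$ and a rescaling to the standard setup $(h_\alpha \cdot \mathbb{T}^2, \phi_0^t)$ over a horizontal transversal of integer length $N$; the Three Gap bound then lifts through the identity above to the desired bound of $3$ on the number of IET subintervals, yielding the Three Intervals Theorem.

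The main subtlety I anticipate lies in the reverse direction's normalization: one must confirm that the affine conjugation and rescaling preserve the count of IET subintervals, and that restricting to integer transversal lengths captures every case up to this equivalence. Both statements are routine consequences of the homogeneity of $\mathbb{T}^2$ and of the fact that the zippered rectangle combinatorics are locally constant in the transversal length away from a discrete exceptional set, but they merit a short verification. The heart of the corollary is otherwise just a restatement of \cref{theorem: three height theorem}.
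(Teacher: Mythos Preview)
Your approach is exactly what the paper does: the corollary is stated immediately after \cref{theorem: three height theorem} with the single sentence ``In light of the above theorem, we get the following equivalence,'' and no further argument. Your elaboration of both implications---and in particular your treatment of the normalization needed for the reverse direction---already goes beyond what the paper itself supplies.
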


This proves the Three Gap Theorem (\cref{theorem: Three Gap Theorem}).

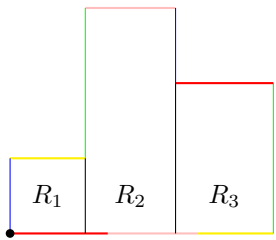
\begin{figure}
\centering
\begin{tikzpicture}[scale = 1]
\draw[red, thick] (0, 0) -- (1.3, 0);
\draw[pink, thick] (1.3, 0) -- (2.5, 0);
\draw[yellow, thick] (2.5, 0) -- (3.5, 0);
\draw[green] (3.5, 0) -- (3.5, 2);
\draw[red, thick] (3.5, 2) -- (2.2, 2);
\draw (2.2, 0) -- (2.2, 2);
\draw[pink, thick] (2.2, 3) -- (1, 3); 
\draw[green] (1, 3) -- (1, 1);
\draw[yellow, thick] (1, 1) -- (0, 1);
\draw (1, 0) -- (1, 1);
\draw[blue] (0, 0) -- (0, 1);
\draw[blue] (2.2, 2) -- (2.2, 3);
\draw node[fill, circle, scale=0.35] at (0,0) {};
\draw node at (0.5, 0.5) {$R_1$} node at (1.6, 0.5) {$R_2$} node at (2.85, 0.5) {$R_3$};
\draw node at (1.9, 2.5) {} node at (3.2, 1.5) {};
\end{tikzpicture}
\caption{A zippered rectangle with colored sides (black excluded) glued together. For $i=1, 2, 3$, the rectangle $R_i$ has width $\lambda_i$ and height $h_i$. This figure tiles the plane.}
\end{figure}

\subsection{The Combinatorial Structure of $\widetilde{\operatorname{Gaps}}_{R_\alpha, N}$, and the zippered rectangle decomposition of $\RR^2/g_{\log N} h_\alpha \cdot \ZZ^2$}
\label{subsection: combinatorial structure of gaps and zippered rectangle decomposition of tori}
\label{subsection: notation}

When $g_{\log N}$ acts on $\RR^2/h_\alpha \cdot \ZZ^2$, it scales it horizontally by $1/N$, and vertically by $N$. In light of \cref{corollary: correspondence}, the combinatorial structure of $\widetilde{\operatorname{Gaps}}_{R_\alpha, N}$ described in \cref{theorem: combinatorial structure theorem}--after scaling all the gaps by $N$--translates to an explicit description of the zippered rectangle decomposition of the unit area tori $\RR^2/g_{\log N} h_\alpha \cdot \ZZ^2$ over any horizontal segment of unit length.

For convenience, we define an operator $\mathfrak{Z}$ that takes a unit area torus $S \in X_2$, and returns the length-height vector $(\lambda, h) \in \RR^4 \sqcup \RR^6$ of its zippered rectangle decomposition over a horizontal line segment of unit length. For the tori $\RR^2/g_{\log N} h_\alpha \cdot \ZZ^2$, we have from \cref{theorem: combinatorial structure theorem} that
\begin{multline*}
\mathfrak{Z}(\RR^2 / g_{\log N} h_{\alpha} \cdot \ZZ^2) =
\begin{cases}
(\frac{q-n_1}{N}, \frac{n_1}{N}, \frac{N}{q}, \frac{N}{q}), & \alpha = \frac{a}{q} \in \mathcal{F}(N) \\
(1 - \frac{q_1}{N}, \frac{q_1}{N} + \frac{q_2}{N} - 1, 1 - \frac{q_2}{N}, \\ N(q_1 \alpha - a_1), N((q_1 - q_2)\alpha - (a_1 - a_2)), N(a_2 - q_2 \alpha)), & \alpha \in \left(\frac{a_1}{q_1}, \frac{a_2}{q_2}\right)
\end{cases}
,
\end{multline*}
where $\mathcal{F}(N)$ are the Farey fractions at level $N$, and $\frac{a_1}{q_1}$ and $\frac{a_2}{q_2}$ are two successive Farey fractions in $\mathcal{F}(N)$. In corollary \ref{corollary: Zippered Rectangle Decompositions of Tori}, we derive the zippered rectangle decompositions for unimodular tori, which makes it possible to retrieve the gap description from theorem \ref{theorem: combinatorial structure theorem} using geometry.

\subsection{Proof of Theorem \ref{theorem: distribution for TGT}, and Setup of Proof of Theorem \ref{theorem: continuous distribution arising from the three gap theorem}}
\label{subsection: gap distribution for circle rotations}

We can now prove \cref{theorem: distribution for TGT}. Moreover, we show in \cref{subsection: proof of continuous distribution theorem} how this gives \cref{theorem: continuous distribution arising from the three gap theorem}.

\begin{proof}[Proof of \cref{theorem: distribution for TGT}]
For convenience, we use the Iverson bracket. For a predicate $P$, the \textbf{Iverson bracket} $[P]$ is
\begin{equation*}
[P] :=
\begin{cases}
1, & \text{ if $P$ is true} \\
0, & \text{ if $P$ is false}
\end{cases}.
\end{equation*}

For any $z \geq 0$, define the cut-off function $d_z : \RR^2 \to \RR$ by \[d_z(x, y) = [y \geq z] x,\]
and define the aggregate cut-off function $f_z : \RR^6 \to \RR$ by
\begin{equation*}
    f_z(\lambda_1, \lambda_2, \lambda_3, h_1, h_2, h_3) = d_z(\lambda_1, h_1) + d_z(\lambda_2, h_2) + d_z(\lambda_3, h_3).
\end{equation*}

It follows from \cref{subsection: notation} that the count of normalized gap lengths greater than or equal to $z$ defined in \cref{subsubsection: the statistics of the gaps of circle rotations} evaluates to
\begin{equation*}
\mathcal{G}_{R_\alpha, N}(z) =
\begin{cases}
[1/q \geq z] q, & \alpha = \frac{a}{q} \in \mathcal{F}(N) \\
N \times f_z(\mathfrak{Z}(g_{\log N} h_\alpha \cdot \ZZ^2)), & \text{ otherwise}
\end{cases}.
\end{equation*}
Integrating we get the average gap distribution defined in \cref{subsubsection: the statistics of the gaps of circle rotations} is
\begin{equation*}
    g_{R_\cdot}^{[a, b]}(z; N) = \frac{1}{b-a} \int_a^b \frac{\mathcal{G}_{R_\alpha, N}(z)}{N}\, d\alpha = \frac{1}{b-a} \int_a^b f_z(\mathfrak{Z}(g_{\log N} h_\alpha \cdot \ZZ^2))\, d\alpha.
\end{equation*}

The uniform probability measure $\nu(\cdot) = \frac{1}{b-a} \int_a^b \cdot\, d\alpha$ is absolutely continuous with respect to the Lebesgue measure on $\RR$. A compactness argument, and the equidistribution of large horocycles (\cref{theorem: equidistribution of large horocycles}) prove \cref{theorem: distribution for TGT}.
\end{proof}

\section{Primitive Lattice Points, the BCZ Map, and Limiting Distributions}
\label{section: primitive lattice points}

If $\frac{a_1}{q_1}, \frac{a_2}{q_2}, \frac{a_3}{q_3} \in \mathcal{F}(N)$ are three consecutive Farey fractions of order $N$, it is well-known that
\begin{equation*}
a_3 = ka_2 - a_1,
\end{equation*}
and 
\begin{equation*}
q_3 = kq_2 - q_1,
\end{equation*}
with $k = \left\lfloor\frac{N+q_1}{q_2}\right\rfloor$. That is, it is possible to generate Farey fractions of order $N$ in increasing order. (They can similarly be generated in decreasing order as well.) In this section, we show that primitive lattice points satisfy the same generative property of Farey fractions, and prove a limiting result that makes it possible to derive \cref{theorem: continuous distribution arising from the three gap theorem}.

\subsection{Primitive Lattice Points}

Given a unimodular lattice $\Lambda \subset \RR^2$, a lattice point $\vec{v} \in \Lambda \setminus \{0\}$ is said to be \textbf{primitive} if the only lattice point in the half-open line segment $[0,1) \cdot \vec{v} = \{r\vec{v} \mid r \in [0,1)\}$ is the origin $\begin{pmatrix} 0 \\ 0 \end{pmatrix}$. We denote by $\Lambda_\text{prim}$ the set of primitive points in $\Lambda$. Note that
\begin{equation*}
\ZZ^2_\text{prim} = \{\begin{pmatrix}q \\ a\end{pmatrix} \in \ZZ^2 \mid \gcd(q,a)=1\}.
\end{equation*}
Also, for any $g \in \SL(2,\RR)$, and any unimodular $\Lambda$,
\begin{equation*}
(g \cdot \Lambda)_\text{prim} = g \cdot \Lambda_\text{prim}.
\end{equation*}

For any $\tau > 0$, denote by $S_\tau$ the vertical strip $(0,\tau]\times \RR$ in the plane. Given two primitive points $\begin{pmatrix}q_1 \\ a_1\end{pmatrix}, \begin{pmatrix}q_2 \\ a_2\end{pmatrix}$ in $\Lambda_\text{prim} \cap S_\tau$, we say that they have \textbf{consecutive slopes} if $\frac{a_1}{q_1} < \frac{a_2}{q_2}$, and there are no other points $\begin{pmatrix}q^\prime \\ a^\prime\end{pmatrix} \in \Lambda_\text{prim} \cap S_\tau$ with $\frac{a_1}{q_1} < \frac{a^\prime}{q^\prime} < \frac{a_2}{q_2}$. Given an interval $I \subseteq \RR$, we write
\begin{equation*}
\mathcal{F}_I(\Lambda, \tau) = \{\begin{pmatrix}q \\ a\end{pmatrix} \in \Lambda_\text{prim} \cap S_\tau \mid \frac{a}{q} \in I\},
\end{equation*}
and
\begin{equation*}
N_{\Lambda, I}(\tau) = \# \mathcal{F}_I(\Lambda, \tau).
\end{equation*}
If $I = \RR$, we write
\begin{equation*}
\mathcal{F}(\Lambda, \tau) = \mathcal{F}_\RR(\Lambda, \tau) = \Lambda_\text{prim} \cap S_\tau.
\end{equation*}
For any integer $N \geq 1$, we can identify $\begin{pmatrix}q \\ a\end{pmatrix} \in \mathcal{F}_{[0,1]}(\ZZ^2, N)$ with the Farey fraction $\frac{a}{q} \in \mathcal{F}(N)$.

\begin{figure}
\label{figure: primitive points example}
\centering
\begin{tikzpicture}
\fill[fill=gray!20] (0,0) -- (3,0.5) -- (3,2.5);
\draw (-0.25,0) -- (4.2,0);
\draw (0,-0.25) -- (0,3.5);
\draw (0,0) -- (3.5,3.5*1/6);
\draw (0,0) -- (3.5,3.5*5/6);
\draw (3,-0.25) -- (3,3.5);
\draw [fill=white] (0,0) circle [radius=0.1em];
\draw [fill=black] (0,1) circle [radius=0.1em];
\draw [fill=white] (0,2) circle [radius=0.1em];
\draw [fill=white] (0,3) circle [radius=0.1em];
\draw [fill=black] (1,0) circle [radius=0.1em];
\draw [fill=black] (1,1) circle [radius=0.1em];
\draw [fill=black] (1,2) circle [radius=0.1em];
\draw [fill=black] (1,3) circle [radius=0.1em];
\draw [fill=white] (2,0) circle [radius=0.1em];
\draw [fill=black] (2,1) circle [radius=0.1em];
\draw [fill=white] (2,2) circle [radius=0.1em];
\draw [fill=black] (2,3) circle [radius=0.1em];
\draw [fill=white] (3,0) circle [radius=0.1em];
\draw [fill=black] (3,1) circle [radius=0.1em];
\draw [fill=black] (3,2) circle [radius=0.1em];
\draw [fill=white] (3,3) circle [radius=0.1em];
\draw [fill=white] (4,0) circle [radius=0.1em];
\draw [fill=black] (4,1) circle [radius=0.1em];
\draw [fill=white] (4,2) circle [radius=0.1em];
\draw [fill=black] (4,3) circle [radius=0.1em];
\draw
node at (3, -0.5) {$\scriptscriptstyle x=\tau$}
node at (3.5, 0.8) {$\scriptscriptstyle y=ax$}
node at (3.5, 3.25) {$\scriptscriptstyle y=bx$};
\end{tikzpicture}
\caption{The set $\mathcal{F}_I(\Lambda, \tau)$ is the collection of primitive points inside the shaded region, with $\Lambda=\ZZ^2$, $I=[1/6,5/6]$, and $\tau=3$. Primitive points are in black, and non-primitive points are in white.}
\end{figure}
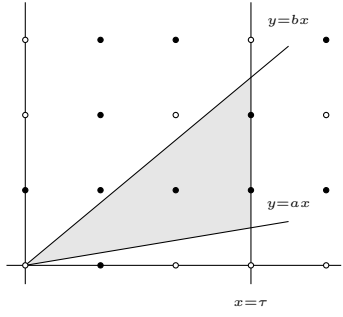

We first state a useful asymptotic result.

\begin{proposition}
\label{proposition: asymptotic growth of N_I}
If $\Lambda \subset \RR^2$ is a unimodular lattice, and $I \subset \RR$ is a finite interval, then
\begin{equation*}
N_{\Lambda, I}(\tau) \sim \frac{3}{\pi^2}|I|\tau^2
\end{equation*}
as $\tau \to \infty$.
\end{proposition}

\begin{proof}
Consider the triangle
\begin{equation*}
A_I := \{\begin{pmatrix}t \\ \alpha t\end{pmatrix} \mid t \in [0,1], \alpha \in I\}.
\end{equation*}
For any $\tau > 0$, the area of the homothetic dilation $\tau A_I$ is $\frac{1}{2}|I|\tau^2$. If $\Lambda = g \cdot \ZZ^2$ is a unimodular lattice, with $g \in \SL(2,\RR)$, then for all $\tau > 0$
\begin{equation*}
N_{\Lambda,I}(\tau) = \#(\Lambda_\text{prim} \cap \tau A_I) = \#(\ZZ_\text{prim}^2 \cap \tau g^{-1} \cdot A_I) = \#(\ZZ_\text{prim}^2 \cap \tau A_I^\prime),
\end{equation*}
with the triangles $\tau A_I^\prime = \tau g^{-1} \cdot A_I$ having the same area as $\tau A_I$.

Now, the triangle $A_I^\prime$ is \emph{starlike} with respect to the origin, and it thus follows from \cite{Nowak1997-gd} that
\begin{equation*}
\#(\ZZ_\text{prim}^2 \cap \tau A_I^\prime) \sim \frac{\operatorname{area}(A_I^\prime)}{\zeta(2)}\tau^2 = \frac{3}{\pi^2}|I|\tau^2,
\end{equation*}
thus proving the claim.
\end{proof}

The following lemma is immediate.\footnote{Note that only matrices of the form $\begin{pmatrix}r & 0 \\ t & r^{-1}\end{pmatrix}$, with $r > 0$ and $t \in \RR$ map the set of lines $x=\tau$, with $\tau > 0$, to itself. That is, only the aforementioned matrices work with our construction $\mathcal{F}(\lambda, \tau)$.} Note that the scaling matrices $s_r$ are related to the geodesic flow matrics $g_t$ by $s_r = g_{-\log r}$. For this section, working with scaling matrices proves to be more convenient.

\begin{lemma}
\label{lemma: scaling and shearing matrices}
For any unimodular lattice $\Lambda \subset \RR^2$, and any $\tau > 0$, the following is true:
\begin{enumerate}
\item If $h_s = \begin{pmatrix} 1 & 0 \\ -s & 1 \end{pmatrix}$, with $s \in \RR$, then
$$h_s \cdot \mathcal{F}_I(\Lambda, \tau) = \mathcal{F}_{I - s}(h_s \cdot \Lambda, \tau).$$
\item If $s_r = \begin{pmatrix} r & 0 \\ 0 & r^{-1} \end{pmatrix}$, with $r > 0$, then $$s_r \cdot \mathcal{F}_I(\Lambda, \tau) = \mathcal{F}_{\frac{1}{r^2}I}(s_r \cdot \Lambda, r\tau).$$
\end{enumerate}
\end{lemma}

\subsubsection{Primitive Points of Lattice with Vertical Vectors}

\begin{lemma}[Lattices with Vertical Vectors]
\label{lemma: lattices with vertical vectors}
Let $\Lambda \subset \RR^2$ be a unimodular lattice with vertical vectors. If $\begin{pmatrix}0 \\ \rho \end{pmatrix}$ is the shortest vertical vector pointing upwards ($\rho > 0$), then there exists a smallest $s \geq 0$ such that the columns of $\begin{pmatrix}\rho^{-1} & 0 \\ s & \rho \end{pmatrix}$ form a positively oriented basis of $\Lambda$. In that case,
\begin{equation*}
\mathcal{F}_I(\Lambda, \tau) = h_{- \rho s} s_\frac{1}{\rho} \cdot \mathcal{F}_\frac{I - \rho s}{\rho^2}(\ZZ^2, \rho \tau).
\end{equation*}
The value $\tau_0 = \frac{1}{\rho}$ is the smallest $\tau > 0$ such that $\mathcal{F}(\Lambda, \tau) \neq \emptyset$, and $\mathcal{F}(\Lambda, \tau) \neq \emptyset$ for all $\tau \geq \tau_0$.
\end{lemma}

\begin{proof}
Let $\begin{pmatrix}r \\ s\end{pmatrix} \in \Lambda$ form with $\begin{pmatrix}\rho \\ 0 \end{pmatrix}$ a positively-oriented basis of $\Lambda$. This implies that $r\rho = \det \begin{pmatrix} r & 0 \\ s & \rho \end{pmatrix} = 1$, which gives $r = \frac{1}{\rho}$. We can assume WLOG that $s \geq 0$ is minimal by adding or subtracting $\begin{pmatrix} 0 \\ \rho \end{pmatrix}$ to $\begin{pmatrix} \frac{1}{\rho} \\ s \end{pmatrix}$ an appropriate number of times. The remainder of the lemma is easy.
\end{proof}

\begin{corollary}
\label{corollary: x components of periodic lattices are periodic}
Let $\Lambda \subset \RR^2$ be a unimodular lattice with vertical vectors. If $\begin{pmatrix}0 \\ \rho\end{pmatrix}$ is the shortest vertical vector in $\Lambda$ pointing upwards ($\rho > 0$), then for any interval $I \subseteq \RR$,
\begin{equation*}
h_{n\rho^2} \cdot \mathcal{F}_I(\Lambda, \tau) = \mathcal{F}_{I - n\rho^2}(\Lambda, \tau).
\end{equation*}
In particular, if $\mathcal{F}_I(\Lambda, \tau) \neq \emptyset$, then the map
\begin{equation*}
h_{n\rho^2} : \mathcal{F}_I(\Lambda, \tau) \to \mathcal{F}_{I - n\rho^2}(\Lambda, \tau)
\end{equation*}
is a bijection that sends any point $\begin{pmatrix}q \\ a\end{pmatrix} \in \mathcal{F}_I(\Lambda, \tau)$ to a point $\begin{pmatrix}p \\ b\end{pmatrix} \in \mathcal{F}_{I - n\rho^2}(\Lambda, \tau)$ with the same $x$-component $p = q$, and slope $\frac{b}{p} = \frac{a}{q} - n\rho^2$.
\end{corollary}

\begin{proof}
It is easy to see for any integer $n \in \ZZ$ that $h_n \cdot \ZZ^2 = \ZZ^2$. From this follows that for any $\tau > 0$ and interval $I \subseteq \RR$
\begin{equation*}
h_n \cdot \mathcal{F}_I(\ZZ^2, \tau) = \mathcal{F}_{I-n}(h_n \cdot \ZZ^2, \tau) = \mathcal{F}_{I-n}(\ZZ^2, \tau).
\end{equation*}

Now, if $\Lambda$ is a unimodular lattice with vertical vectors, and $\begin{pmatrix}0 \\ \rho\end{pmatrix}$ with $\rho > 0$ is the shortest vertical vector in $\Lambda$ pointing upwards, then for any interval $I \subseteq \RR$ and any $\tau > 0$
\begin{equation*}
\mathcal{F}_I(\Lambda, \tau) = h_{- \rho s} s_\frac{1}{\rho} \cdot \mathcal{F}_\frac{I - \rho s}{\rho^2}(\ZZ^2, \rho \tau)
\end{equation*}
from lemma \ref{lemma: scaling and shearing matrices}. A direction computation gives
\begin{eqnarray*}
h_{n\rho^2} (h_{-\rho s} s_{\rho^{-1}}) &=& h_{-\rho s} h_{n\rho^2} s_{\rho^{-1}} \\
&=& h_{-\rho s} s_{\rho^{-1}} h_{n}.
\end{eqnarray*}
From this and \cref{lemma: lattices with vertical vectors}, we get
\begin{eqnarray*}
h_{n\rho^2} \cdot \mathcal{F}_I(\Lambda, \tau) &=& h_{- \rho s} s_\frac{1}{\rho} h_n \cdot \mathcal{F}_\frac{I - \rho s}{\rho^2}(\ZZ^2, \rho \tau) \\
&=& h_{- \rho s} s_\frac{1}{\rho} \cdot \mathcal{F}_{\frac{I - \rho s}{\rho^2}-n}(\ZZ^2, \rho \tau) \\
&=& h_{- \rho s} s_\frac{1}{\rho} \cdot \mathcal{F}_\frac{(I - n \rho^2) - \rho s}{\rho^2}(\ZZ^2, \rho \tau) \\
&=& \mathcal{F}_{I-n\rho^2}(\Lambda, \tau).
\end{eqnarray*}
The rest of the corollary is easy.
\end{proof}

\subsubsection{Primitive Points of Lattices with no Vertical Vectors}

\begin{lemma}[Lattices with no Vertical Vectors]
Let $\Lambda \subset \RR^2$ be a unimodular lattice with no vertical vectors. Then $\mathcal{F}(\Lambda, \tau) \neq \emptyset$ for all $\tau > 0$.
\end{lemma}

\begin{proof}
Consider the rectangle $R_{\tau, h} = [-\tau, \tau] \times [-h, h]$, with $h > \frac{1}{\tau}$. The rectangle $R_{\tau, h}$ has area $(2\tau)(2h) > 2^2$, and so by Minkowski's theorem, it contains a non-zero lattice point, and that point is not vertical as $\Lambda$ has no vertical vectors. That is, there exists a lattice point in $\Lambda$ whose $x$-component is non-zero, and is bounded in absolute value by $\tau$. This implies that $\Lambda \cap S_\tau$, and necessarily $\Lambda_\text{prim} \cap S_\tau = \mathcal{F}(\Lambda, \tau)$, is non-empty.
\end{proof}

\begin{proposition}
Let $\Lambda \subset \RR^2$ be a unimodular lattice with no vertical vectors. For all $\tau > 0$, and any two distinct points $\begin{pmatrix}q \\ a\end{pmatrix}, \begin{pmatrix}q^\prime \\ a^\prime \end{pmatrix} \in \mathcal{F}(\Lambda, \tau)$, $q \neq q^\prime$. In particular, no two distinct points $\begin{pmatrix}q \\ a\end{pmatrix}, \begin{pmatrix}q^\prime \\ a^\prime \end{pmatrix} \in \mathcal{F}(\Lambda, \tau)$ are related by a relationship of the form $h_\alpha \begin{pmatrix}q \\ a\end{pmatrix} = \begin{pmatrix}q^\prime \\ a^\prime \end{pmatrix}$ for any $\alpha \in \RR$.
\end{proposition}

\begin{proof}
If two distinct points $\begin{pmatrix}q \\ a\end{pmatrix}, \begin{pmatrix}q^\prime \\ a^\prime \end{pmatrix} \in \mathcal{F}(\Lambda, \tau)$ have the same $x$-component, then their difference is a vertical non-zero vector in $\Lambda$, which contradicts the assumption that $\Lambda$ has no vertical vectors. The rest follows from the fact that action of $h_\alpha$ preserves the $x$-component.
\end{proof}

\subsection{Primitive Lattice Points, the Farey Triangle, and the BCZ Map}
\label{subsection: Primitive Lattice Points, the Farey Triangle, and the BCZ Map}

We now draw several parallels between Farey fractions, and primitive lattice points in a strip.

\subsubsection{Farey Neighbors}

\begin{lemma}
\label{lemma: Farey Neighbors}
Let $\Lambda$ be a unimodular lattice, $\tau > 0$ with $\mathcal{F}(\Lambda, \tau) \neq \emptyset$, and $\begin{pmatrix}q_1 \\ a_1\end{pmatrix}, \begin{pmatrix}q_2 \\ a_2\end{pmatrix}$ be two points in $\mathcal{F}(\Lambda, \tau)$ with consecutive slopes. The following are true.
\begin{enumerate}[label=(\alph*)]
\item The $x$-components $q_1$ and $q_2$ satisfy $0 < q_1, q_2 \leq \tau$.
\item There are no lattice points from $\Lambda$ in the interior of the triangle bound by the lines $y = \frac{a_1}{q_1}x$, $y = \frac{a_2}{q_2}x$, and $x = \tau$.
\item The $x$-components $q_1$ and $q_2$ satisfy $q_1 + q_2 > \tau$.
\item The vectors $\begin{pmatrix}q_1 \\ a_1\end{pmatrix}$ and $\begin{pmatrix}q_2 \\ a_2\end{pmatrix}$ form a positively-oriented basis of the lattice $\Lambda$.
\item The components of $\begin{pmatrix}q_1 \\ a_1\end{pmatrix}$ and $\begin{pmatrix}q_2 \\ a_2\end{pmatrix}$ satisfy the \textbf{Farey neighbor identity}
\begin{equation*}
a_2 q_1 - a_1 q_2 = 1.
\end{equation*}
Equivalently,
\begin{equation*}
\frac{a_2}{q_2} = \frac{a_1}{q_1} + \frac{1}{q_1q_2}.
\end{equation*}
\end{enumerate}
\end{lemma}

\begin{proof}
\begin{enumerate}[label=(\alph*)]
\item The points $\begin{pmatrix}q_1 \\ a_1\end{pmatrix}$ and $\begin{pmatrix}q_2 \\ a_2\end{pmatrix}$ belong to the strip $S_\tau$, and so their $x$-components belong to $(0,\tau]$.
\item If there exists a lattice point $\vec{u}$ in the interior of the aforementioned triangle, then the triangle contains a primitive lattice point $\vec{u}_0 = r_0 \vec{u}$, with $r_0$ be the smallest $r > 0$ such that $r\vec{u} \in \Lambda$. The point $\vec{u}_0$ is inside the strip $S_\tau$. The slope of $\vec{u}_0$ is included in $\left(\frac{a_1}{q_1}, \frac{a_2}{q_2}\right)$, which contradicts $\begin{pmatrix}q_1 \\ a_1\end{pmatrix}$ and $\begin{pmatrix}q_2 \\ a_2\end{pmatrix}$ having consecutive slopes.
\item The sum $\vec{u} = \begin{pmatrix}q_1 \\ a_1\end{pmatrix} + \begin{pmatrix}q_2 \\ a_2\end{pmatrix}$ is a lattice point with slope included in $\left(\frac{a_1}{q_1}, \frac{a_2}{q_2}\right)$. The $x$-component of $\vec{u}$ is $q_1 + q_2$, and if $q_1 + q_2 \leq \tau$, then $\vec{u}$ belongs to the strip $S_\tau$, which contradicts the statement in this lemma. This implies that $q_1 + q_2 > \tau$.
\item Assume that $\vec{v}_1 = \begin{pmatrix}q_1 \\ a_1\end{pmatrix}$ and $\vec{v}_2 = \begin{pmatrix}q_2 \\ a_2\end{pmatrix}$ do \emph{not} form a basis of $\Lambda$. By a well-known property of (unimodular) lattices, then there must be a \emph{non-zero} lattice point $\vec{u} = \begin{pmatrix}u_1 \\ u_2\end{pmatrix} = \alpha_1 \vec{v}_1 + \alpha_2 \vec{v}_2$, with $\alpha_1, \alpha_2 \in [0,1)$. Note that the slope of $\vec{u}$ is included in $\left(\frac{a_1}{q_1}, \frac{a_2}{q_1}\right)$, and so must have $u_1 > \tau$ by the first statement in this lemma. Now consider the vector $\vec{w} = \vec{v}_1 + \vec{v}_2 - \vec{u}$:
  \begin{enumerate}
  \item It is a lattice point.
  \item Its $x$-component $q_1 + q_2 - u_1$ satisfies
   \begin{equation*}
   0 < q_1 + q_2 - u_1 < \tau + \tau - \tau = \tau.
   \end{equation*}
   That is, $\vec{u}$ belongs to the strip $S_\tau$.
  \item It is on the form
   \begin{equation*}
   \vec{w} = (1-\alpha_1)\vec{v}_1 + (1-\alpha_2)\vec{v}_2,
   \end{equation*}
   with $\alpha_1, \alpha_2 \in (0,1]$. That is, the slope of $\vec{w}$ is included in $\left(\frac{a_1}{q_1}, \frac{a_2}{q_1}\right)$.
  \end{enumerate}
The point $\vec{w}$, and by extension $\vec{u}$, thus cannot exist by the first statement in this lemma. Then $\vec{v}_1$ and $\vec{v}_2$ form a basis of $\Lambda$. They are positively-oriented as the slope of $\vec{v}_2$ is larger than that of $\vec{v}_1$.
\item Since $\begin{pmatrix}q_1 \\ a_1\end{pmatrix}$ and $\begin{pmatrix}q_2 \\ a_2\end{pmatrix}$ form a positively-oriented basis of the unimodular lattice $\Lambda$, they satisfy
\begin{equation*}
1 = \det\begin{pmatrix}q_1 & q_2 \\ a_1 & a_2\end{pmatrix} = a_2 q_1 - a_1 q_2 = 1.
\end{equation*}
\end{enumerate}
\end{proof}

\begin{corollary}
Let $\Lambda \subset \RR^2$ be a unimodular lattice, $\tau > 0$ with $\mathcal{F}(\Lambda, \tau) \neq \emptyset$, and $\begin{pmatrix}q \\ a\end{pmatrix} \in \Lambda_\text{prim} \cap S_\tau$ be a primitive vector. There exists a unique $\begin{pmatrix}q^\prime \\ a^\prime\end{pmatrix} \in \Lambda_\text{prim} \cap S_\tau$ such that $\begin{pmatrix}q \\ a\end{pmatrix}$ and $\begin{pmatrix}q^\prime \\ a^\prime\end{pmatrix}$ have consecutive slopes, and a unique $\begin{pmatrix}q^{\prime\prime} \\ a^{\prime\prime}\end{pmatrix}$ such that $\begin{pmatrix}q^{\prime\prime} \\ a^{\prime\prime}\end{pmatrix}$ and $\begin{pmatrix}q \\ a\end{pmatrix}$ have consecutive slopes.
\end{corollary}

\begin{proof}
If $\Lambda$ has a vertical vector, then the corollary is obviously true, and so we assume WLOG that $\Lambda$ has \emph{no} vertical vectors.

It suffices to prove that $\Lambda \cap S_\tau$ contains points whose slopes are strictly smaller and bigger than $\frac{a}{q}$, and for that we use Minkowski's theorem. Consider the rectangle $R_{\frac{q}{2},h} = [-q/2,q/2] \times [-h,h]$, with $h > \frac{2}{q}$. This rectangle has area bigger than $2^2$, and so by Minkowski's theorem contains a non zero lattice point $\vec{v}_0$. Note that $\vec{v}_0$ is not a scalar multiple of $\begin{pmatrix} q \\ a \end{pmatrix}$, and is not vertical. Now, the two vectors $\begin{pmatrix} q \\ a \end{pmatrix} \pm \vec{v}_0$ are in $\Lambda \cap S_\tau$, have finite slopes that are stricly bigger and smaller than that of $\frac{a}{q}$. This implies that $\Lambda \cap S_\tau$, and necessarily $\Lambda_\text{prim} \cap S_\tau$, contains points with slopes strictly bigger and smaller than $\frac{a}{q}$, which proves the claim.
\end{proof}

\subsubsection{Primitive Lattice Points, and the Farey Triangle $\mathscr{T}$}

\begin{corollary}
\label{corollary: primitive lattice points and the Farey triangle}
Let $\Lambda$ be a unimodular lattice, $\tau > 0$ with $\mathcal{F}(\Lambda, \tau) \neq \emptyset$, and $\begin{pmatrix}q_1 \\ a_1\end{pmatrix}, \begin{pmatrix}q_2 \\ a_2\end{pmatrix} \in \mathcal{F}(\Lambda, \tau)$ be two primitive vectors with consecutive slopes.
\begin{enumerate}[label=(\alph*)]
\item For $\alpha \in \left[\frac{a_1}{q_1}, \frac{a_2}{q_2}\right]$, the lattices $h_\alpha \cdot \Lambda$ have horizontal lattice points with lengths not exceeding $\tau$ only at $\alpha = \frac{a_1}{q_1}, \frac{a_2}{q_2}$.
\item The columns of the matrix $\begin{pmatrix}\frac{q_1}{\tau} & \frac{q_2}{\tau} \\ 0 & \left(\frac{q_1}{\tau}\right)^{-1}\end{pmatrix}$ form a positively-oriented basis of $s_\frac{1}{\tau} h_\frac{a_1}{q_1} \cdot \Lambda = h_{\frac{a_1}{q_1}\tau^2}(s_\frac{1}{\tau} \cdot \Lambda)$, and are two primitive points with consecutive slopes in $\mathcal{F}\left(s_\frac{1}{\tau} h_\frac{a_1}{q_1} \cdot \Lambda, 1\right) = \left(s_\frac{1}{\tau} h_\frac{a_1}{q_1} \cdot \Lambda\right)_\text{prim} \cap S_1$, with $\begin{pmatrix}\frac{q_1}{\tau} \\ 0 \end{pmatrix}$ being the shortest horizontal vector of $s_\frac{1}{\tau} h_\frac{a_1}{q_1} \cdot \Lambda$ with a positive $x$-component. \item The lattice $s_\frac{1}{\tau} h_\frac{a_1}{q_1} \cdot \Lambda = h_{\frac{a_1}{q_1}\tau^2}(s_\frac{1}{\tau} \cdot \Lambda)$ can be identified with the point $\left(\frac{q_1}{\tau}, \frac{q_2}{\tau}\right)$ in the \textbf{Farey triangle} \cite{Boca2001-he}
\begin{equation*}
\mathscr{T} = \{ (x,y) \in \RR^2 \mid 0 < x, y, \leq 1, x + y > 1 \}.
\end{equation*}
\item If $\begin{pmatrix}q_1 \\ a_1\end{pmatrix}, \begin{pmatrix}q_2 \\ a_2\end{pmatrix}, \begin{pmatrix}q_3 \\ a_3\end{pmatrix} \in \mathcal{F}(\Lambda, \tau)$ have consecutive slopes, then
\begin{equation*}
T\left(\frac{q_1}{\tau}, \frac{q_2}{\tau}\right) = \left(\frac{q_2}{\tau}, \frac{q_3}{\tau}\right),
\end{equation*}
where $T : \mathscr{T} \to \mathscr{T}$ is the \textbf{BCZ map} \cite{Boca2001-he}
\begin{equation*}
T(x, y) = \left(y,\left\lfloor \frac{1+x}{y} \right\rfloor y - x \right).
\end{equation*}
\end{enumerate}
\end{corollary}

\begin{proof}
\begin{enumerate}[label=(\alph*)]
\item This follows from the second statement in \cref{lemma: Farey Neighbors}, and the fact that the triangular region bound by $y=\frac{a_1}{q_1}x$, $y=\frac{a_2}{q_2}x$, and $x=\tau$ is mapped by $h_\alpha$ to the triangular region bound by $y=\left(\frac{a_1}{q_1} - \alpha\right)x$, $y=\left(\frac{a_2}{q_2} - \alpha\right)x$, and $x=\tau$, which will continue to have no lattice points from $h_\alpha \cdot \Lambda$ in its interior.
\item By the third statement in \cref{lemma: Farey Neighbors}, the columns of the matrix $\begin{pmatrix}q_1 & q_2 \\ a_1 & a_2\end{pmatrix}$ form a positively-oriented basis of $\Lambda$. This gives a positively-oriented basis of $s_\frac{1}{\tau}h_\frac{a_1}{q_1} \cdot \Lambda$
\begin{eqnarray*}
s_\frac{1}{\tau} h_\frac{a_1}{q_1} \begin{pmatrix}q_1 & q_2 \\ a_1 & a_2\end{pmatrix} &=& \begin{pmatrix}\frac{1}{\tau} & 0 \\ 0 & \tau\end{pmatrix} \begin{pmatrix}1 & 0 \\ -\frac{a_1}{q_1} & 1\end{pmatrix} \begin{pmatrix}q_1 & q_2 \\ a_1 & a_2\end{pmatrix} \\
&=& \begin{pmatrix}\frac{1}{\tau} & 0 \\ 0 & \tau\end{pmatrix} \begin{pmatrix}q_1 & q_2 \\ 0 & \frac{1}{q_i}\end{pmatrix} \\
&=& \begin{pmatrix}\frac{q_1}{\tau} & \frac{q_2}{\tau} \\ 0 & \left(\frac{q_1}{\tau}\right)^{-1}\end{pmatrix}
\end{eqnarray*}
where $a_2q_1 - a_1q_2 = 1$ from \cref{lemma: Farey Neighbors} has been used.
\item Given a point $(r, s)$ in the Farey triangle $\mathscr{T}$, we have $\begin{pmatrix} r & s \\ 0 & r^{-1}\end{pmatrix} \in \SL(2, \RR)$ with determinant $1$, and so its columns form a positively-oriented basis of the unimodular lattice $\Lambda_{r,s} = \begin{pmatrix} r & s \\ 0 & r^{-1}\end{pmatrix} \cdot \ZZ^2$. Since $\vec{v}_1 = \begin{pmatrix}r \\ 0\end{pmatrix}$ and $\vec{v}_2 = \begin{pmatrix}s \\ r^{-1}\end{pmatrix}$ form a basis of $\Lambda_{r,s}$, then the only lattice point in $W_{r,s} = [0,1)\cdot\vec{v}_1 + [0,1)\cdot\vec{v}_2$ is the origin $\vec{0}$. This implies that $\vec{v}_1$ and $\vec{v}_2$ are primitive lattice points of $\Lambda_{r,s}$, and it remains to show that they have consecutive slopes. Consider the wedge $\tilde{W}_{r,s} = \cup_{n,m \in \mathbb{N}}W_{a,b} + n\vec{v}_1 + m\vec{v}_2$. The two lines $y=0$ and $y=\frac{1}{rs}x$ parallel to $\vec{v}_1$ and $\vec{v}_2$ bound $\tilde{W}_{r,s}$, and the only lattice points in $\tilde{W}_{r,s}\cap\Lambda_{a,b}$ lying strictly between $y=0$ and $y=\frac{1}{rs}x$ are of the form $n\vec{v}_1 + m\vec{v}_2$ with $n,m>0$. Since $r+s>1$, the $x$-components of all the points $n\vec{v}_1 + m\vec{v}_2$ with $n,m>0$ are greater than $1$, and hence lie on the left of the line $x=1$. So, the  interior of the triangle bound by $y=0$, $y=\frac{1}{rs}x$, and $x=1$ contains no lattice points, and the columns of $\begin{pmatrix} r & s \\ 0 & r^{-1}\end{pmatrix}$ correspond to primitive lattice points with consecutive slopes in $(\Lambda_{r,s})_\text{prim} \cap S_1$.

It remains to show that given two points $(r, s), (w, z) \in \mathscr{T}$ in the Farey triangle, we the unimodular lattices $\Lambda_{r,s} = \begin{pmatrix}r & s \\ 0 & r^{-1}\end{pmatrix}\ZZ^2$ and $\Lambda_{w,z} = \begin{pmatrix}w & z \\ 0 & w^{-1}\end{pmatrix}\ZZ^2$ are equal only if $(r, s) = (w, z)$. The two lattices $\Lambda_{(r,s)}, \Lambda_{(w,z)}$ are equal if and only if there exists a matrix $\begin{pmatrix}l & m \\ n & k\end{pmatrix} \in \SL(2, \ZZ)$ such that
\begin{equation*}
\begin{pmatrix}r & s\\ 0 & r^{-1}\end{pmatrix}^{-1} \begin{pmatrix}w & z \\ 0 & w^{-1}\end{pmatrix} = \begin{pmatrix}r^{-1}w & r^{-1}z - sw^{-1} \\ 0 & rw^{-1}\end{pmatrix} = \begin{pmatrix}l & m \\ n & k\end{pmatrix}.
\end{equation*}
From this follows that $w=rl$, $r=wk$, so $r=rlk$, from which $l = k = 1$, and so $r = w$. From $z-s=rm$ follows the following
\begin{eqnarray*}
rm &=& z - s \\
&=& (r+z) - (r+s) \\
&=& (w+z) - (a+s) \\
&<& (r+1) - (1) \\
&=& a,
\end{eqnarray*}
and $0 \leq rm < r$ implies that $m=0$. Obviously, $n=0$, and we are done.

\item The points $\left(\frac{q_1}{\tau}, \frac{q_2}{\tau}\right)$ and $\left(\frac{q_2}{\tau}, \frac{q_3}{\tau}\right)$ in the Farey triangle $\mathscr{T}$ can be identified with the lattices $\Lambda_{\frac{q_1}{\tau}, \frac{q_2}{\tau}} = h_{\frac{a_1}{q_1}\tau^2}(s_\frac{1}{\tau} \cdot \Lambda)$ and $\Lambda_{\frac{q_2}{\tau}, \frac{q_3}{\tau}} = h_{\frac{a_2}{q_2}\tau^2}(s_\frac{1}{\tau} \cdot \Lambda)$. Note for $\alpha \in \left[\frac{a_1}{q_1},\frac{a_2}{q_2}\right]$, the lattices $h_{\alpha\tau^2}(s_\frac{1}{\tau}\Lambda)$ have horizontal lattices points with lengths not exceeding $1$ only at $\alpha = \frac{a_1}{q_1},\frac{a_2}{q_2}$. From this, and \cite[lemma 2.2]{Athreya2013-ql}, follows that $T\left(\frac{q_1}{\tau}, \frac{q_2}{\tau}\right) = \left(\frac{q_2}{\tau}, \frac{q_3}{\tau}\right)$.
\end{enumerate}
\end{proof}

\subsubsection{Generating Primitive Lattice Points}

\begin{theorem}
\label{theorem: generating primitive lattice points}
Let $\Lambda$ be a unimodular lattice, $\tau >0$ with $\mathcal{F}(\Lambda, \tau)$, and $\begin{pmatrix}q_1 \\ a_1\end{pmatrix}, \begin{pmatrix}q_2 \\ a_2\end{pmatrix}, \begin{pmatrix}q_3 \\ a_3\end{pmatrix} \in \mathcal{F}(\Lambda, \tau)$ be three primitive points with consecutive slopes. Then
\begin{equation*}
\begin{pmatrix}q_3 \\ a_3\end{pmatrix} = k \begin{pmatrix}q_2 \\ a_2\end{pmatrix} - \begin{pmatrix}q_1 \\ a_1\end{pmatrix},
\end{equation*}
where $k = \left\lfloor \frac{\tau + q_1}{q_2} \right\rfloor$, and
\begin{equation*}
\begin{pmatrix}q_1 \\ a_1\end{pmatrix} = k^\prime \begin{pmatrix}q_2 \\ a_2\end{pmatrix} - \begin{pmatrix}q_3 \\ a_3\end{pmatrix},
\end{equation*}
where $k^\prime = \left\lfloor \frac{\tau + q_3}{q_2} \right\rfloor$.
In particular
\begin{equation*}
\frac{a_2}{q_2} = \frac{a_1 + a_3}{q_1 + q_3}.
\end{equation*}
\end{theorem}

\begin{proof}
That $q_3 = k q_2 - q_1$ follows from $T\left(\frac{q_1}{\tau}, \frac{q_2}{\tau}\right) = \left(\frac{q_2}{\tau}, \frac{q_3}{\tau}\right)$. To prove that $a_3 = k a_2 - a_1$, we use $q_3 = k q_2 - q_1$, along with two applications of the Farey neighbor identity from \cref{lemma: Farey Neighbors}:
\begin{eqnarray*}
a_3 &=& q_3 \times \frac{a_3}{q_3} \\
&=& q_3 \left(\frac{a_2}{q_2} + \frac{1}{q_2 q_3}\right) \\
&=& (k q_2 - q_1) \frac{a_2}{q_2} + \frac{1}{q_2} \\
&=& k a_2 + (1 - q_1a_2)\frac{1}{q_2} \\
&=& k a_2 + (-a_1q_2) \frac{1}{q_2} \\
&=& k a_2 - a_1
\end{eqnarray*}
The rest follows similarly from the BCZ map $T : \mathscr{T} \to \mathscr{T}$ having an inverse $T^{-1} : \mathscr{T} \to \mathscr{T}$ given by \cite{Boca2001-he}
\begin{equation*}
T(x,y) = \left(\left\lfloor\frac{1+y}{x}\right\rfloor x -y, x\right).
\end{equation*}
\end{proof}

\subsection{Best Approximations by Primitive Lattice Points}
\label{subsection: Best Approximations by Primitive Lattice Points}

Note that by \cref{lemma: Farey Neighbors}, if $\Lambda$ is a unimodular lattice, $\tau > 0$ with $\mathcal{F}(\Lambda, \tau) \neq \emptyset$, and $\begin{pmatrix}q_1 \\ a_1\end{pmatrix}, \begin{pmatrix}q_2 \\ a_2\end{pmatrix} \in \mathcal{F}(\Lambda, \tau)$ are two primitive points with consecutive slopes, then
\begin{equation*}
\frac{a_2}{q_2} - \frac{a_1}{q_1} = \frac{1}{q_1q_2} \geq \frac{1}{\tau^2}.
\end{equation*}
That is, the slopes of the elements of $\mathcal{F}(\Lambda, \tau)$ do \emph{not} accumulate. This makes it possible to find best upper and lower approximates of any real number by points of $\mathcal{F}(\Lambda, \tau)$.

\begin{definition}
Let $\Lambda \subset \RR^2$ be a unimodular lattice, and $\tau > 0$ with $\mathcal{F}(\Lambda, \tau)$. We define the best upper and lower approximation maps $\approxplus, \approxminus : \RR \to \mathcal{F}(\Lambda, \tau)$ as follows: for every $\alpha \in \RR$, $\approxplus_{\Lambda, \tau}(\alpha)$ is the element $\begin{pmatrix}q \\ a\end{pmatrix} \in \mathcal{F}(\Lambda, \tau)$ with the smallest slope $\frac{a}{q} \geq \alpha$, and $\approxminus_{\Lambda, \tau}(\alpha)$ is the element $\begin{pmatrix}q^\prime \\ a^\prime\end{pmatrix} \in \mathcal{F}(\Lambda, \tau)$ with the largest slope $\frac{a^\prime}{q^\prime} \leq \alpha$. For convenience, we write
\begin{equation*}
\slope_{\Lambda, \tau}^\pm(\alpha) = \slope\left(\approxpm_{\Lambda, \tau}(\alpha)\right)
\end{equation*}
for the slopes of best approximates by primitive lattice points.
\end{definition}

The following lemma is immediate.

\begin{lemma}
\label{lemma: scaling and shearing approximations}
Let $\Lambda$ be a unimodular lattice, and $\tau > 0$ with $\mathcal{F}(\Lambda, \tau) \neq \emptyset$. The following are true.
\begin{enumerate}[label=(\alph*)]
\item For any $s \in \RR$,
\begin{equation*}
h_s \approxpm_{\Lambda, \tau}(\alpha) = \approxpm_{h_s  \cdot \Lambda, \tau}(\alpha-s).
\end{equation*}
\item For any $r > 0$,
\begin{equation*}
s_r \approxpm_{\Lambda, \tau}(\alpha) = \approxpm_{s_r \cdot \Lambda, r\tau}(\frac{1}{r^2}\alpha).
\end{equation*}
\end{enumerate}
\end{lemma}

\begin{lemma}
\label{lemma: limit of slope of best approximations}
The slopes of any unimodular lattice $\Lambda \subset \RR^2$ are dense in the projective real line $\mathbb{RP}^1$. In particular, for any $\alpha \in \RR$
\begin{equation*}
\lim_{\tau \to \infty} \slope^\pm_{\Lambda, \tau}(\alpha) = \alpha.
\end{equation*}
\end{lemma}

\begin{proof}
The slopes of the points in $\ZZ^2$ are the rational points in $\mathbb{RP}^1$, and hence are dense. Given any unimodular lattice $\Lambda = g \cdot \ZZ^2$, with $g = \begin{pmatrix}a & b \\ c & d\end{pmatrix}$, the slopes of $\Lambda$ are the images of the rational points in $\mathbb{RP}^1$ under the rational maps $g(s) = \frac{c+ds}{a+bs}$, which is an automorhpism of the projective line, and hence are dense. The remainder of the lemma is obvious.
\end{proof}

We also get the following useful identity. Note that when we write $\sum_{i=0}^{N_{\Lambda,I}(\tau)-1} \frac{1}{q_i q_{i+1}}$, the last term involves $q_{N_{\Lambda,I}(\tau)}$, which is \emph{not} the $x$-component of any of the elements of $\mathcal{F}_I(\Lambda,\tau)$. We write this with the understanding that $\vec{v}_1 = \begin{pmatrix}q_{N_{\Lambda,I}(\tau) - 1} \\ a_{N_{\Lambda,I}(\tau) - 1}\end{pmatrix}$ and $\vec{v}_2 = \begin{pmatrix}q_{N_{\Lambda,I}(\tau)} \\ a_{N_{\Lambda,I}(\tau)}\end{pmatrix}$ have consecutive slopes in $\mathcal{F}(\Lambda,\tau)$. That is, $\vec{v}_1$ is the last element in $\mathcal{F}(\Lambda,\tau)$ with slope in $I$, and $\vec{v}_2$ is the following element, necessarily with a slope not in $I$. This should not cause any confusion, and for brevity we do not point it out whenever the aforementioned sums are involved.

\begin{lemma}
\label{lemma: sum of product of reciprocals}
Let $\Lambda$ be a unimodular lattice, $I = [a,b] \subset \RR$ a finite interval, and $\tau > 0$ with $\mathcal{F}_I(\Lambda, \tau) \neq \emptyset$. Writing $\mathcal{F}_I(\Lambda, \tau) = \left\{\begin{pmatrix}q_i \\ a_i\end{pmatrix}\right\}_{i=0}^{N_{\Lambda,I}(\tau) - 1}$, with the elements in increasing slope order, we have
\begin{equation*}
\sum_{i=0}^{N_{\Lambda,I}(\tau)-1} \frac{1}{q_i q_{i+1}} = \slope^+_{\Lambda,\tau}(b) - \slope^+_{\Lambda,\tau}(a).
\end{equation*}
In particular,
\begin{equation*}
\lim_{\tau \to \infty} \sum_{i=0}^{N_{\Lambda,I}(\tau)-1} \frac{1}{q_i q_{i+1}} = b - a = |I|.
\end{equation*}
\end{lemma}

\begin{proof}
By the Farey neighbor identity from \cref{lemma: Farey Neighbors}, and mathematical induction, we get that
\begin{equation*}
\slope^+_{\Lambda,\tau}(a) + \sum_{i=0}^{N_{\Lambda,I}(\tau)-1} \frac{1}{q_i q_{i+1}} = \slope^+_{\Lambda,\tau}(b).
\end{equation*}
The limit follows from \cref{lemma: limit of slope of best approximations}.
\end{proof}

\subsection{Limiting Distributions of Primitive Points}
\label{subsection: Limiting Distributions of Primitive Points}

In \cite{Athreya2013-ql}, the following theorem on the Farey triangle as a cross section to the horocycle flow on $X_2$, along with a number of limiting Farey fraction distribution results, were proved. In the remainder of this section, we use the previous results from this section to extend the aforementioned limiting Farey distribution results to the primitive points in all lattices.

\begin{theorem}[\cite{Athreya2013-ql}]
\label{theorem: Farey triangle as cross section}
Let $\mathscr{T}$ be the Farey triangle, $dm = 2dxdy$ be twice the Lebesgue measure on $\mathscr{T}$ in the plane, and $T : \mathscr{T} \to \mathscr{T}$ be the BCZ map. Let $X_2$ be the space of unimodular lattices, $\mu_2$ the Haar measure on $X_2$, and $h_\cdot$ the horocycle flow on $X_2$. The following are true.
\begin{enumerate}[label=(\alph*)]
\item For any $(x, y) \in \mathscr{T}$, denote $p_{x,y} = \begin{pmatrix}x & y \\ 0 & x^{-1}\end{pmatrix}$. The Farey triangle $\mathscr{T}$ can be bijectively identified with the set $\Omega = \{p_{x,y}\cdot\ZZ^2 \mid (x,y) \in \mathscr{T}\} \subset X_2$ via the map
\begin{equation*}
\Lambda_{\cdot, \cdot} : \mathscr{T} \to \Omega
\end{equation*}
defined for all $(x,y) \in \mathscr{T}$ by
\begin{equation*}
\Lambda_{x,y} = p_{x,y}\cdot\ZZ^2.
\end{equation*}
\item The triple $(\mathscr{T}, m, T)$, with $\mathscr{T}$ identified with $\Omega$, is a cross section to $(X_2, \mu_2, h_\cdot)$, with the roof function $R : \mathscr{T} \to \RR_+$ defined by
\begin{equation*}
R(x,y) = \frac{1}{xy}
\end{equation*}
for all $(x,y) \in \mathscr{T}$.
\end{enumerate}
\end{theorem}

\subsubsection{Distributions Related to Primitive Lattice Points}

For any unimodular lattice $\Lambda$, $\tau > 0$ with $\mathcal{F}_I(\Lambda,\tau) \neq \emptyset$, we write $\mathcal{F}_I(\Lambda, \tau) = \left\{\begin{pmatrix}q_i \\ a_i\end{pmatrix}\right\}_{i=0}^{N_{\Lambda,I}(\tau) - 1}$, with the elements in increasing slope order. We define the measures
\begin{equation*}
\rho_{\Lambda, I, \tau} = \frac{1}{N_{\Lambda, I}(\tau)} \sum_{i=0}^{N_{\Lambda, I}(\tau) - 1} \delta_{T^i\left(\frac{q_0}{\tau}, \frac{q_1}{\tau}\right)}
\end{equation*}
on the Farey triangle $\mathscr{T}$ in the plane. The following theorem shows that those measures converge to $m$ on $\mathscr{T}$.

\begin{theorem}
\label{theorem: equidistribution of x-component measures}
For any unimodular lattice $\Lambda$, finite interval $I = [a,b] \subset \RR$, and $\tau > 0$ with $\mathcal{F}_I(\Lambda, \tau) \neq \emptyset$, the measures $\rho_{\Lambda, I, \tau}$ weak-$\ast$ convergence
\begin{equation*}
\rho_{\Lambda, I, \tau} \rightharpoonup m.
\end{equation*}
\end{theorem}

\begin{proof}
Given a continuous, bounded function $f : X_2 \to \RR$, we have
\begin{eqnarray*}
\frac{1}{b-a} \int_a^b f(s_\frac{1}{\tau} h_\alpha \cdot \Lambda)\, d\alpha &=& \frac{1}{b-a} \int_a^{\slope_{\Lambda, \tau}^+(a)} + \int_{\slope_{\Lambda, \tau}^+(a)}^{\slope_{\Lambda, \tau}^+(b)} - \int_b^{\slope_{\Lambda, \tau}^+(b)} f(s_\frac{1}{\tau} h_\alpha \cdot \Lambda)\, d\alpha \\
&=& \frac{1}{b-a} \int_{\slope_{\Lambda, \tau}^+(a)}^{\slope_{\Lambda, \tau}^+(b)} f(s_\frac{1}{\tau} h_\alpha \cdot \Lambda)\, d\alpha + O(\slope_{\Lambda, \tau}^+(a)-a) + O(\slope_{\Lambda, \tau}^+(b)-b) \\
&=& \frac{1}{b-a} \int_{\slope_{\Lambda, \tau}^+(a)}^{\slope_{\Lambda, \tau}^+(b)} f(s_\frac{1}{\tau} h_\alpha \cdot \Lambda)\, d\alpha + o(1).
\end{eqnarray*}
From \cref{lemma: limit of slope of best approximations} we thus get
\begin{equation*}
\lim_{\tau \to \infty} \frac{1}{\slope_{\Lambda, \tau}^+(b) - \slope_{\Lambda, \tau}^+(a)} \int_{\slope_{\Lambda, \tau}^+(a)}^{\slope_{\Lambda, \tau}^+(b)} f(s_\frac{1}{\tau} h_\alpha \Lambda)\, d\alpha = \lim_{\tau \to \infty} \frac{1}{b-a} \int_a^b f(s_\frac{1}{\tau} h_\alpha \Lambda)\, d\alpha.
\end{equation*}
From this and theorem \ref{theorem: equidistribution of large horocycles} follows the weak-$\ast$ convergence
\begin{equation*}
(s_\frac{1}{\tau})_\ast(\pi_\Lambda)_\ast(h_\cdot)_\ast\operatorname{Unif}_{[{\slope_{\Lambda, \tau}^+(a)}, {\slope_{\Lambda, \tau}^+(b)}]} \rightharpoonup \mu_2,
\end{equation*}
where $\operatorname{Unif}_{[{\slope_{\Lambda, \tau}^+(a)}, {\slope_{\Lambda, \tau}^+(b)}]}$ is the uniform measure on $[{\slope_{\Lambda, \tau}^+(a)}, {\slope_{\Lambda, \tau}^+(b)}]$.

By \cref{theorem: Farey triangle as cross section}, $X_2$ is a suspension over the triangle $\mathscr{T}$ with roof function $R$, and so
\begin{equation*}
\rho_{\Lambda, I, \tau}^R := \rho_{\Lambda, I, \tau}dt = (s_\frac{1}{\tau})_\ast(\pi_\Lambda)_\ast(h_\cdot)_\ast\operatorname{Unif}_{[{\slope_{\Lambda, \tau}^+(a)}, {\slope_{\Lambda, \tau}^+(b)}]},
\end{equation*}
from which
\begin{equation*}
\rho_{\Lambda, I, \tau}^R \rightharpoonup \mu_2
\end{equation*}
by the weak-$\ast$ convergence we have just proved. Writing $\pi : X_2 \to \mathscr{T}$ for the projection map from $X_2$ to the cross section $\mathscr{T}$, $\pi$ is continuous except on a set of measure zero with respect to $\rho_{\Lambda, I, \tau}^R$ and $\mu_2$. From this
\begin{equation*}
\rho_{\Lambda, I, \tau} = \frac{1}{R} \pi_\ast \rho_{\Lambda, I, \tau}^R \rightharpoonup \frac{1}{R} \pi_\ast \mu_2 = m,
\end{equation*}
and we are done.
\end{proof}

The next lemma generalizes \cite[lemma 5.2]{Athreya2013-ql}, and makes it possible to show convergence of integrals with respect to the measures $\rho_{\Lambda, I, \tau}$ for a large family of functions.

\begin{lemma}
\label{lemma: family of convergent integrals}
Let $\Lambda$ be a unimodular lattice, $I \subset \RR$ a finite interval, and $\tau_{\Lambda,I}$ be the infimum of the values $\tau > 0$ such that $\mathcal{F}_I(\Lambda, \tau) \neq \emptyset$. Denoting the measure $m$ on $\mathscr{T}$ by $\rho_{\Lambda, I, \infty}$, if a measureable function $f : \mathscr{T} \to \RR$ satisfies
\begin{equation*}
\sup_{\tau_0 < \tau \leq \infty} \int_\mathscr{T} |f|\, d\rho_{\Lambda,I,\tau} < \infty,
\end{equation*}
then
\begin{equation*}
\lim_{\tau \to \infty} \int_\mathscr{T} f\,d\rho_{\Lambda,I,\tau} = \int_\mathscr{T} f\,dm.
\end{equation*}
\end{lemma}

\begin{proof}
Proof of \cite[lemma 5.2]{Athreya2013-ql}, verbatim, with the measures $\rho_{\Lambda, I, \tau}$ replacing $\rho_{N}$.
\end{proof}

It should be noted that at this point we have the tools to generalize many Farey fraction statistical results of dynamical nature to primitive points of arbitrary lattices in $\RR^2$. As an example, we extend a proof of a theorem of \cite{Hall1984-bu} in \cite{Athreya2013-ql} from the denominators of Farey fractions to $x$-components of primitive lattice points.

\begin{corollary}
\label{corollary: Hall-Tanenbaum}
Let $\Lambda \subset \RR^2$ be a unimodular lattice, $I \subset \RR$ a finite interval, and $\mathcal{F}_I(\Lambda, \tau) = \left\{\begin{pmatrix}q_i \\ a_i\end{pmatrix}\right\}_{i=0}^{N_{\Lambda,I}(\tau) - 1}$, with the elements in increasing slope order of slope. Then for any $s,t \in \mathbb{C}$ with $\Re(s), \Re(t) \geq -1$
\begin{equation*}
\lim_{\tau \to \infty} \frac{1}{N_{\Lambda, I}(\tau) \tau^{s+t}} \sum_{i=0}^{N_{\Lambda,I}(\tau)-1} q_i^s q_{i+1}^t = \int_\mathscr{T} x^s y^t\, dm = 2 \int_\mathscr{T} x^s y^t\, dxdy.
\end{equation*}
\end{corollary}

\begin{proof}
Write $f_{s,t}(x,y) = x^sy^t$. For all $(x,y) \in \mathscr{T}$,
\begin{equation*}
|f_{s,t}(x,y)| \leq |f_{-1,-1}(x,y)|.
\end{equation*}
From \cref{lemma: sum of product of reciprocals}, we get
\begin{equation*}
\rho_{\Lambda, I, \tau}(f_{-1,-1}) = \frac{1}{N_{\Lambda, I}(\tau)} \sum_{i=0}^{N_{\Lambda,I}(\tau) - 1} \frac{\tau^2}{q_i q_{i+1}},
\end{equation*}
which is uniformly bounded by \cref{proposition: asymptotic growth of N_I}, and \cref{lemma: sum of product of reciprocals}. A direct calculation shows that $m(f_{-1,-1}) = \frac{\pi^2}{3}$, and so we are done by \cref{lemma: family of convergent integrals}.
\end{proof}

\subsection{Zippered Rectangle Decompositions of Tori, and Proof of Theorem \ref{theorem: continuous distribution arising from the three gap theorem}}
\label{subsection: proof of continuous distribution theorem}

We now have everything we need to prove \cref{theorem: continuous distribution arising from the three gap theorem}. We do this on two steps: First, in \cref{corollary: Zippered Rectangle Decompositions of Tori}, we describe the zippered rectangle decomposition of unimodular tori over unit length horizontals using primitive lattice points. Second, in \cref{proposition: change of variables}, we show how \cref{theorem: equidistribution of x-component measures} can be used to derive the continuous distribution in \cref{theorem: continuous distribution arising from the three gap theorem}. Beginning to end, this is a stand-alone proof of \cref{theorem: continuous distribution arising from the three gap theorem} using geometry and dynamics.

In \cref{proposition: change of variables}, the function $f$ is intended to be the aggregate function $f_z$ from \cref{subsection: gap distribution for circle rotations}. Integrating $f=f_z$ and the corresponding $F$ is direct, but lengthy, and can be extracted from the proof in \cite{Polanco_undated-ts} where \cref{theorem: continuous distribution arising from the three gap theorem} first showed up. For this particular $F$, the uniform boundedness of the integrals $\rho_{\Lambda,I,\tau}(F)$ follows from \cref{lemma: sum of product of reciprocals}, and the first two statements in \cref{lemma: Farey Neighbors}.

\begin{corollary}
\label{corollary: Zippered Rectangle Decompositions of Tori}
Let $\Lambda$ be a unimodular lattice, $\tau > 0$ with $\mathcal{F}(\Lambda, \tau) \neq \emptyset$, and $\begin{pmatrix}q_1 \\ a_1\end{pmatrix}, \begin{pmatrix}q_2 \\ a_2\end{pmatrix} \in \mathcal{F}(\Lambda, \tau)$ be two primitive vectors with consecutive slopes. Then for any $\alpha \in \RR$, if $\alpha \in \left(\frac{a_1}{q_1}, \frac{a_2}{q_2}\right)$, the zippered rectangle decomposition $\mathfrak{Z} = (\lambda_1, \lambda_2, \lambda_3, h_1, h_2, h_3)$ of the torus $\RR^2 / s_\frac{1}{\tau} h_\alpha \cdot \Lambda$ over a marked unit length horizontal is given by
\begin{eqnarray*}
\lambda_1 &=& 1 - \frac{q_1}{\tau}, \\
\lambda_2 &=& 1 - \lambda_2 - \lambda_3 = \frac{q_1 + q_2}{\tau} - 1, \\
\lambda_3 &=& 1 - \frac{q_2}{\tau}, \\
h_1 &=& \tau (\alpha q_1 - a_1), \\
h_2 &=& h_1 + h_3 = \tau ((a_2 - \alpha q_2) - (a_1 - \alpha q_1)), \text{ and}\\
h_3 &=& \tau (a_2 - \alpha q_2).
\end{eqnarray*}
\end{corollary}

\begin{proof}
\begin{figure}
\centering
\begin{tikzpicture}
\node at (3,1.5) {$\scriptstyle(q_2, a_2 - \alpha q_2)^T$};
\node at (3,1) {$\bullet$};
\node at (7,1) {$\times$};
\node at (0,0.4) {$\scriptstyle(0,0)^T$};
\node at (3.25,0.5) {$h_3$};
\node at (0,0) {$\bullet$};
\node at (4,0.2) {$\lambda_3$};
\node at (5,0.4) {$\scriptstyle(\tau, 0)^T$};
\node at (5,0) {$\times$};
\node at (5.25,-1) {$h_1$};
\node at (4,-2) {$\bullet$};
\node at (4,-2.6) {$\scriptstyle(q_1, a_1 - \alpha q_1)^T$};
\node at (4.5,-1.8) {$\lambda_1$};
\node at (8,-2) {$\times$};
\draw [->] (0,0) -- (6, 2);
\draw [->] (0,0) -- (6, 0);
\draw [->] (0,0) -- (6,-3);
\draw [line width=1] (3,1) -- (7,1);
\draw [line width=1] (0,0) -- (5,0);
\draw [line width=1] (4,-2) -- (8,-2);
\draw [->] (3,1) -- (3,0) ;
\draw [->] (5,0) -- (5,-2);
\end{tikzpicture}
\caption{A lift of the torus $\RR^2/h_\alpha\Lambda$ to the plane $\RR^2$ with a marked horizontal of length $\tau$. The left endpoints of the lifts of the said horizontal are marked as $\bullet$, and are situated at the lattice points $h_\alpha\Lambda$, and the right endpoints are marked as $\times$.}
\label{fig: zippered rectangles}
\end{figure}
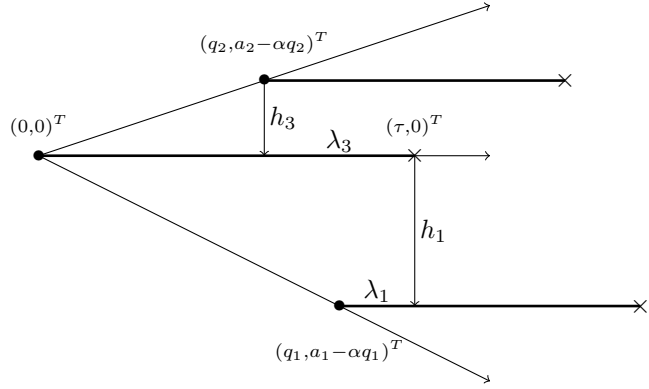

Lift the torus $\RR^2/h_\alpha \cdot \Lambda$ to the plane $\RR^2$ as in figure \ref{fig: zippered rectangles}. Note that the interior of the triangle bound by the lines $y = \left(\frac{a_1}{q_1} - \alpha\right)x$, $y = \left(\frac{a_2}{q_2} - \alpha\right)x$, and $x = \tau$ contains no lattice points from $h_\alpha \cdot \Lambda$. In particular, the interior of the quadrilateral $W_1$ with vertices at $\begin{pmatrix}0 \\ 0\end{pmatrix}$, $\begin{pmatrix}\tau \\ 0\end{pmatrix}$, $\begin{pmatrix}\tau \\ a_2 - \alpha q_2\end{pmatrix}$, and $\begin{pmatrix}q_2 \\ a_2 - \alpha q_2\end{pmatrix}$ contains no lattice points from $h_\alpha \cdot \Lambda$. Now, the interior of the quadrilateral $W_2$ with vertices at $\begin{pmatrix}0 \\ 0\end{pmatrix}$, $\begin{pmatrix}q_2 - \tau \\ 0\end{pmatrix}$, $\begin{pmatrix}q_2 - \tau \\ a_2 - \alpha q_2\end{pmatrix}$, and $\begin{pmatrix}q_2 \\ a_2 - \alpha q_2\end{pmatrix}$ contains no lattices points from $h_\alpha \Lambda$ as well. If there existed a $\vec{v}_0 \in \operatorname{int}(W_2) \cap \Lambda$, then the lattice point $\begin{pmatrix}q_2 \\ a_2 - \alpha q_2\end{pmatrix} - \vec{v}_0$ would be in the interior of $W_1$, which is a contradiction. This implies that no lift of the marked horizontal will extend between $\begin{pmatrix}q_2 \\ a_2 - \alpha q_2\end{pmatrix}$ and its projection of the lift of the marked horizontal situated at the origin. This gives $\lambda_3 = \tau - q_2$, and $h_3 = a_2 - \alpha q_2$. The same argument gives $\lambda_1 = \tau - q_1$, and $h_1 = \alpha q_1 - a_1$. The remaining parameters are given by $\lambda_2 = 1 - (\lambda_1 + \lambda_3)$, and $h_2 = h_1 + h_3$. This gives the zippered rectangle decomposition of $\RR^2/h_\alpha \cdot \Lambda$.

Scaling the zippered rectangle decomposition of $\RR^2/h_\alpha \Lambda$ by $s_\frac{1}{\tau}$ gives the canonical zippered rectangle decomposition of $\RR^2/s_\frac{1}{\tau}h_\alpha \cdot \Lambda$.
\end{proof}

\begin{proposition}
\label{proposition: change of variables}
Let $\Lambda$ be a unimodular lattice, and $I = [a,b] \subset \RR$ be a finite interval. Given a function $f : \RR^6 \to \RR$, if the function
\begin{equation*}
F(x,y) = \frac{1}{xy} \int_0^1 f(1 - x, x + y - 1, 1 - y, \frac{t}{y}, \left(\frac{1}{y} - \frac{1}{x}\right)t + \frac{1}{x}, \frac{1-t}{x})\, dt.
\end{equation*}
is defined on $\mathscr{T}$, is measureable, and its integral with respect to $\rho_{\Lambda, I, \tau}$ converges as $\tau \to \infty$ (i.e. $\rho_{\Lambda, I, \tau}(F) \to m(F)$), then
\begin{equation*}
\lim_{\tau \to \infty} \frac{1}{b-a}\int_a^b f(\mathfrak{Z}(s_\frac{1}{\tau}h_\alpha\cdot\Lambda)\,d\alpha = \frac{3}{\pi^2} \int_\mathscr{T} F\,dm = \frac{6}{\pi^2} \int_\mathscr{T} F(x,y)\,dxdy.
\end{equation*}
\end{proposition}

\begin{proof}
For any Farey arc $\left(\frac{a_i}{q_i}, \frac{a_{i+1}}{q_{i+1}}\right)$, the change of variables $\alpha = \frac{a_i}{q_i} + t\left(\frac{a_{i+1}}{q_{i+1}} - \frac{a_i}{q_i} \right) = \frac{a_i}{q_i} + \frac{t}{q_iq_{i+1}}$, with $t \in (0,1)$ gives
\begin{equation*}
\int_{a_i/q_i}^{a_{i+1}/q_{i+1}} f(\mathfrak{Z}(s_\frac{1}{\tau}h_\alpha \cdot \Lambda)) d\alpha = \frac{1}{\tau^2}F\left(\frac{q_1}{\tau}, \frac{q_2}{\tau}\right).
\end{equation*}
We thus get
\begin{eqnarray*}
\frac{1}{b-a}\int_a^b f(\mathfrak{Z}(s_\frac{1}{\tau}h_\alpha \cdot \Lambda)) d\alpha &=& \frac{1}{b-a} \sum_{i=0}^{N_{\Lambda,I}(\tau) - 1} \int_\frac{a_i}{q_i}^\frac{a_{i+1}}{q_{i+1}} f(\mathfrak{Z}(s_\frac{1}{\tau}h_\alpha \cdot \Lambda)) + o(1) \\
&=& \frac{1}{b-a} \sum_{i=0}^{N_{\Lambda,I}(\tau) - 1} \frac{1}{\tau^2} F\left(\frac{q_1}{\tau}, \frac{q_2}{\tau}\right) + o(1) \\
&=& \frac{1}{b-a} \frac{N_{\Lambda,I}(\tau)}{\tau^2} \frac{1}{N_{\Lambda,I}(\tau)} \sum_{i=0}^{N_{\Lambda,I}(\tau) - 1} F\left(\frac{q_1}{\tau}, \frac{q_2}{\tau}\right) + o(1) \\
&=& \frac{1}{b-a} \left(\frac{3(b-a)}{\pi^2} + o(1)\right) \rho_{\Lambda, I, \tau}(F) + o(1)
\end{eqnarray*}
From this follows that if $\rho_{\Lambda, I, \tau}(F) \to m(F)$, then
\begin{equation*}
\lim_{\tau \to \infty} \frac{1}{b-a}\int_a^b f(\mathfrak{Z}(s_\frac{1}{\tau}h_\alpha \cdot \Lambda)) d\alpha = \frac{3}{\pi^2} \int_\mathscr{T} F\,dm = \frac{6}{\pi^2} \int_\mathscr{T} F(x,y)\,dxdy.
\end{equation*}
\end{proof}

\section{Gap Theorem for $d$-IETS via Zippered Rectangles}
\label{section: gap theorem for d-iets via zippered rectangles}

In this section, we demonstrate that the technique used in \cref{section: the three gap theorem and zippered rectangles} to prove the Three Gap Theorem can be used to prove a generalization of the theorem for general intervals exchange transformations.

\subsection{The Surface $S_T$ Defined by IET $T$}
\label{subsection: the surface S_T defined by IET T}

Given an IET $T$, construct the surface $S_T$ (motivated by \cite{Athreya2012-yw}) as follows: Take the square $[0, 1] \times [0, 1]$, identify the horizontal sides by translation, and the vertical sides by $T$. Concretely, for each $i$, mark the points $(0, \alpha_i)$ on the left vertical, the points $(1, \beta_i)$ on the right vertical, and glue the vertical segments $[(0, \alpha_{i-1}), (0, \alpha_i)]$ and $[(1, \beta_{\pi(i)-1}), (1, \beta_{\pi(i)})]$ together.

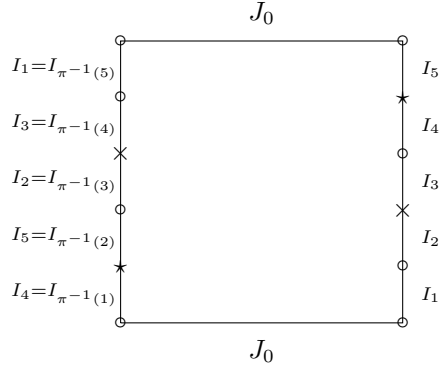
\begin{figure}
\centering
\begin{tikzpicture}[scale = .75]
\draw (0, 0) -- (5, 0) -- (5, 5) -- (0, 5) -- (0, 0);
\draw
node at (0, 0) {$\circ$}
node at (0, 1) {$\star$}
node at (0, 2) {$\circ$}
node at (0, 3) {$\times$}
node at (0, 4) {$\circ$}
node at (0, 5) {$\circ$}
node at (5, 0) {$\circ$}
node at (5, 1) {$\circ$}
node at (5, 2) {$\times$}
node at (5, 3) {$\circ$}
node at (5, 4) {$\star$}
node at (5, 5) {$\circ$};
\draw
node at (-1, 0.5) {$\scriptstyle I_4 = I_{\pi^{-1}(1)} $}
node at (-1, 1.5) {$\scriptstyle I_5 = I_{\pi^{-1}(2)}$}
node at (-1, 2.5) {$\scriptstyle I_2 = I_{\pi^{-1}(3)}$}
node at (-1, 3.5) {$\scriptstyle I_3 = I_{\pi^{-1}(4)}$}
node at (-1, 4.5) {$\scriptstyle I_1 = I_{\pi^{-1}(5)}$}
node at (5.5, 0.5) {$\scriptstyle I_1$}
node at (5.5, 1.5) {$\scriptstyle I_2$}
node at (5.5, 2.5) {$\scriptstyle I_3$}
node at (5.5, 3.5) {$\scriptstyle I_4$}
node at (5.5, 4.5) {$\scriptstyle I_5$};
\draw node at (2.5, -0.5) {$J_0$};
\draw node at (2.5, 5.5) {$J_0$};
\end{tikzpicture}
\caption{The surface $S_T$ for a $5$-IET that has combinatorial data $\pi = (1\ 5\ 2\ 3\ 4)$, with labels for gluing the line segments. The surface has three vertices $\circ, \star, \times$ with respective cone angles $6\pi, 2\pi, 2\pi$. It has genus $g = 2$, and lives in the stratum $\mathcal{H}_2(2)$.}
\end{figure}

After gluing, we mark the point on $S_T$ corresponding to $(0, 0)$ and call it a \textbf{marked origin}. As such, $S_T$ is a Riemann surface with a marked point. We intend to follow the same strategy used to prove the Three Gap Theorem: We start at the vertex $(0, 0)$, and flow horizontally for time $t = N$. A small subtlety arises here. For a general IET $T$, the horizontal trajectory in question can hit singularities in its path, and there will be a finite choice to make when it comes to flowing \emph{out} of each singularity. We choose the trajectory that gives the ``correct'' returns to the vertical through $(0, 0)$ (i.e. that intersects the vertical at the points corresponding to the orbit $\{T^n0\}_{n=0}^\infty$). We call this the \textbf{canonical horizontal trajectory of length $N$ coming out of the origin}.

The following lemma characterizes the property of marked origins being singularities.

\begin{lemma}
\label{lemma: characterization of a marked origin being a singularity}
Let $T$ and $S_T$ be as above. The marked origin is a singularity if and only if $\pi^{-1}(1) - \pi^{-1}(d) \neq 1$.
\end{lemma}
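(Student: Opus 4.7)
The plan is to determine when the marked origin of $S_T$ is a singularity by directly computing the cone angle there. Since $S_T$ is a translation surface, the cone angle at any vertex must be an integer multiple of $2\pi$: the origin is regular precisely when this angle equals $2\pi$, and is a singularity precisely when it is at least $4\pi$. The angle is additive over the equivalence class of marked boundary points collapsed to that vertex, with each of the four square corners contributing $\pi/2$ to its class and each interior marked point on a side contributing $\pi$.

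First I would label the $2(d+1)$ marked boundary points as $L_j = (0,\alpha_j)$ on the left and $R_j = (1,\beta_j)$ on the right, and catalogue the identifications. The horizontal top--bottom gluing forces $L_0 \sim L_d$ and $R_0 \sim R_d$, while the IET gluing of each pair of vertical subintervals contributes one identification between the two bottom endpoints and one between the two top endpoints. To trace the equivalence class of $L_0 = (0,0)$: it contains $L_d$ by the horizontal identification, and following the IET identifications out of the bottom of $I^L_1$ and the top of $I^L_d$ brings in two further vertices $R_p$ and $R_q$ whose indices are read off directly from the combinatorial data (they come out to be $\pi^{-1}(1) - 1$ and $\pi^{-1}(d)$ under the length-matching convention). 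Irreducibility of $\pi$, which is forced by minimality of $T$, excludes $\pi^{-1}(1) = 1$ and $\pi^{-1}(d) = d$, so $R_p$ and $R_q$ are both interior points of the right side and each contributes a local angle of $\pi$ rather than $\pi/2$.

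A sharp dichotomy now follows. If $R_p \neq R_q$ the class contains four distinct points $L_0, L_d, R_p, R_q$ contributing $\pi/2 + \pi/2 + \pi + \pi = 3\pi$ to the cone angle; the integrality constraint then forces the class to be strictly larger and the cone angle to be at least $4\pi$, so the origin is a singularity. If instead $R_p = R_q$, the coincidence condition reads $\pi^{-1}(1) - 1 = \pi^{-1}(d)$, i.e., exactly the negation of the condition in the lemma, and the tentative class is the three-point set $\{L_0, L_d, R_p\}$ with candidate cone angle $\pi/2 + \pi/2 + \pi = 2\pi$. The main obstacle, and the only remaining work, is to verify that this three-point set is genuinely closed under the full gluing graph so that the cone angle really does equal $2\pi$. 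Concretely, the interior vertex $R_p$ has a second IET-neighbor on the left coming from the top of $I^R_{p}$; substituting the coincidence hypothesis into the index formula for that neighbor will show it equals $d$, so the neighbor is already $L_d$ and the class closes up. Combining the two directions yields the lemma.
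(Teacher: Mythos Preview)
Your argument is correct and follows essentially the same idea as the paper's proof, which is a one-sentence observation that the origin is regular exactly when the bottom and top left-side segments glue to segments on the right that are ``back to back'' (adjacent), i.e., when $\pi^{-1}(1) = \pi^{-1}(d) + 1$. You have simply made the underlying cone-angle bookkeeping explicit, including the closure check for the three-point class and the use of irreducibility to guarantee that the right-side neighbors are interior points; the paper leaves all of this to the reader.
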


\begin{proof}
Since the permutation $\pi$ is irreducible, it is easy to see that $(0,0)$ is not a singularity if and only if the first and last segment $I_{\pi^{-1}(1)}$ and $I_{\pi^{-1}(d)}$ on the left side correspond to segments on the right side that are \emph{back to back}, that is, $\pi^{-1}(1) = \pi^{-1}(d) + 1$.
\end{proof}

We call IETs with $\pi^{-1}(1) - \pi^{-1}(d) = 1$ \textbf{arc exchange maps}.

\begin{lemma}
Let $T$ and $S_T$ be as above.
\begin{enumerate}
\item The first return map of the horizontal flow to any closed vertical segment is given by the map $T$.
\item The IET $T$ fails to satisfy the i.d.o.c condition if and only if there is a horizontal saddle connection that is not the horizontal unit length closed line segment $[(0, \alpha_0), (1, \beta_0)] \sim [(0, \alpha_d), (1, \beta_d)]$.
\end{enumerate}
\end{lemma}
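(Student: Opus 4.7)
The plan is to handle both parts by tracing horizontal trajectories on $S_T$ and reading off the combinatorics of $T$. For part (1), the construction of $S_T$ makes the claim essentially tautological: a horizontal trajectory leaving $(0, y)$ on the left vertical travels for unit time to reach $(1, y)$ on the right vertical, and the right-vertical gluing was set up precisely so that $(1, y)$ with $y \in I_i$ is identified with $(0, T(y))$ on the left vertical. Hence the first return to the left vertical is $T$. For a general closed vertical transversal $V'$ of unit height, I would argue by horizontal translation: locally this is an isometry of the flat structure, and after carrying $V'$ across the seam using the gluing, one sees that the return map to $V'$ is conjugate, via this horizontal shift, to the return map to the canonical left vertical, and hence still equals $T$.

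For part (2), I plan to set up a correspondence between horizontal saddle connections on $S_T$ and finite orbit relations among the discontinuities of $T^{-1}$. The singularities of $S_T$ are the equivalence classes (under the vertical gluing) of the interior partition points $(0, \alpha_i)$ for $i = 1, \ldots, d-1$ and $(1, \beta_j)$ for $j = 1, \ldots, d-1$, together with the equivalence class of the four corners arising from $(0, \alpha_0)$ under top/bottom identification and the gluing. For the reverse direction ($\Leftarrow$), a horizontal saddle connection $\gamma$ different from the distinguished bottom/top segment has, after rerouting any right-vertical endpoint through the gluing, both endpoints among the $\alpha_i$ on the left vertical. Part (1) then shows that $\gamma$ having horizontal length $n$ corresponds to $T^n(\alpha_i^\pm) = \alpha_j$ for some $i, j \in \{1, \ldots, d-1\}$, which violates i.d.o.c. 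For the forward direction ($\Rightarrow$), failure of i.d.o.c furnishes $i, j \in \{1, \ldots, d-1\}$ and $n \geq 1$ with $T^{-n}(\alpha_j^\pm) = \alpha_i$; lifting this orbit relation through part (1) produces a horizontal trajectory from $(0, \alpha_i)$ to $(0, \alpha_j)$ of length $n$, which is a saddle connection distinct from the distinguished one (whose endpoints are the corner class $(0, 0)$, not an $\alpha_i$ with $i \in \{1, \ldots, d-1\}$).

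The main obstacle I anticipate is the bookkeeping around one-sided limits and the choice of outgoing direction at each intermediate singularity encountered by a horizontal trajectory. At each $\alpha_i$ there are two horizontal half-trajectories emerging on either side, so tracking a saddle connection requires consistently choosing one side at each crossing and matching that choice to the one-sided limit $\alpha_i^+$ or $\alpha_i^-$ in the relation $T^n(\alpha_i^\pm) = \alpha_j$. A secondary subtlety is excluding precisely the distinguished horizontal segment from the correspondence: the corner class $(0, 0) \sim (0, 1)$ may or may not be a singularity of $S_T$, depending on whether $T$ is an arc exchange map (by the preceding lemma on marked origins), and $\alpha_0 = 0$ is not counted as a discontinuity in the i.d.o.c sense, so this segment's presence as a saddle connection is independent of i.d.o.c and must be treated separately from the correspondence above.
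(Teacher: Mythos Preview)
The paper states this lemma without proof, so there is no argument of the paper's to compare against. Your plan is correct and is the standard way one would justify these claims. Part~(1) is indeed immediate from the construction: the side identifications were chosen precisely so that crossing from the right edge to the left edge implements $T$ on the vertical coordinate, and horizontal translation carries any other full vertical circle to the canonical one while commuting with the horizontal flow, so the return map is literally $T$ (not merely conjugate to it, since the vertical coordinate is preserved by the translation). For part~(2), your bijection between nontrivial horizontal saddle connections and relations of the form $T^{-n}\alpha_j=\alpha_i$ with $i,j\in\{1,\dots,d-1\}$ and $n\ge 1$ is exactly the content of Keane's i.d.o.c.\ condition, and the two obstacles you flag---consistent choice of one-sided germ at each singular crossing, and the separate treatment of the bottom/top segment whose endpoints lie in the corner class rather than among the interior $\alpha_i$---are the only genuine bookkeeping issues. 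Nothing is missing.
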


\begin{lemma}
Let $T$ and $S_T$ be as above. For any $\alpha \in \RR$:
\begin{enumerate}
\item The construction $S_\cdot$ is compatible with shearing by $h_\alpha$. That is, $h_\alpha \cdot S_T = S_{T \circ R_\alpha}$.
\item Shearing by $h_\alpha$ takes any line segment $X \subset S_T$ of slope $\alpha$ to a horizontal line segment $h_\alpha \cdot X \subset S_{T \circ R_\alpha}$. Moreover, the return time of any $x \in X$ to $X$ by the vertical flow on $S_T$ is the same as that of $h_\alpha \cdot x \in h_\alpha \cdot X$ to $h_\alpha \cdot X$ by the vertical flow on $S_{T \circ R_\alpha}$.
\end{enumerate}
\end{lemma}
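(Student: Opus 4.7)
The plan is to show that $h_\alpha\cdot S_T$ is a unit-return-time suspension of the IET $T\circ R_\alpha$ over the left vertical $\{0\}\times[0,1]$ by the horizontal flow; since $S_{T\circ R_\alpha}$ is by definition exactly such a suspension, uniqueness forces the translation-surface equality in (1), and (2) then reduces to two linear observations. Without loss of generality I would assume $\alpha\in[0,1)$---both $R_\alpha$ and the translation-surface class $h_\alpha\cdot S_T$ are invariant under $\alpha\mapsto\alpha+1$, the latter because $h_1$ acts on $S_T$ by a cut-and-paste along $y=0$.

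\textbf{Part (1).} I would describe $h_\alpha\cdot S_T$ as the parallelogram with vertices $(0,0)$, $(1,-\alpha)$, $(1,1-\alpha)$, $(0,1)$: its left vertical is fixed pointwise by $h_\alpha$, its right vertical becomes $\{1\}\times[-\alpha,1-\alpha]$, the vertical gluing is the shifted image of the one on $S_T$ (carrying the first-return convention $(1,y)\sim(0,T(y))$ over to $(1,z)\sim(0,T(z+\alpha))$), and the slanted top and bottom are glued by translation by $\pm(0,1)$. I would then trace the horizontal flow starting at $(0,\tilde y)$ with $\tilde y\in[0,1]$. For $\tilde y\in[0,1-\alpha]$ the trajectory runs directly to $(1,\tilde y)\sim(0,T(\tilde y+\alpha))=(0,(T\circ R_\alpha)(\tilde y))$. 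For $\tilde y\in(1-\alpha,1]$ the trajectory hits the top slanted edge at $\bigl((1-\tilde y)/\alpha,\tilde y\bigr)$, is translated down by $1$ to $\bigl((1-\tilde y)/\alpha,\tilde y-1\bigr)$, and continues to $(1,\tilde y-1)\sim(0,T(\tilde y+\alpha-1))=(0,(T\circ R_\alpha)(\tilde y))$. In both cases the total horizontal travel is exactly $1$, so the horizontal flow on $h_\alpha\cdot S_T$ has constant unit return time and first-return map $T\circ R_\alpha$ on $\{0\}\times[0,1]$. Since $S_{T\circ R_\alpha}$ is by construction the unit square with vertical sides glued by $T\circ R_\alpha$ and horizontal sides by translation, the map sending $(t,\tilde y)\in[0,1]^2$ to the time-$t$ image of $(0,\tilde y)$ under the horizontal flow on $h_\alpha\cdot S_T$ is an explicit translation-surface isomorphism $S_{T\circ R_\alpha}\to h_\alpha\cdot S_T$.

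\textbf{Part (2) and main obstacle.} The first claim in (2) is immediate from $h_\alpha(1,\alpha)^T=(1,0)^T$, together with (1), which places $h_\alpha\cdot X$ inside $S_{T\circ R_\alpha}$ as a horizontal segment. For the return-time statement, $h_\alpha(0,1)^T=(0,1)^T$ fixes the vertical direction, and on every vertical line $\{x=x_0\}$ the shear is just the translation $y\mapsto y-\alpha x_0$, which preserves vertical distances. Thus $h_\alpha$ conjugates the vertical flow on $S_T$ to the vertical flow on $h_\alpha\cdot S_T=S_{T\circ R_\alpha}$ without rescaling time, and return times to $X$ and to $h_\alpha\cdot X$ match pointwise. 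The main obstacle is the case analysis in (1)---confirming that the horizontal trajectory really closes up after unit horizontal displacement (so that the suspension has constant return time $1$), and that the reduction to $\alpha\in[0,1)$ via the $h_1$ cut-and-paste is a genuine translation-surface identity. Once those are in hand, the rest is formal from the uniqueness of unit suspensions together with the two one-line matrix computations above.
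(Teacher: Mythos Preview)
Your proof is correct. The paper states this lemma without proof, evidently regarding it as a routine verification; your suspension argument---identifying both $h_\alpha\cdot S_T$ and $S_{T\circ R_\alpha}$ as the unit-return-time suspension of $T\circ R_\alpha$ over the left vertical, via the explicit case analysis of horizontal trajectories on the sheared parallelogram---is precisely the natural way to supply the details, and the two matrix identities $h_\alpha(1,\alpha)^T=(1,0)^T$ and $h_\alpha(0,1)^T=(0,1)^T$ are the expected one-line justifications for part~(2).
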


This proves the following property of the compositions of IETs and circle rotations.

\begin{corollary}
For any IET $T$, there are at most countably many $\alpha \in \RR$ for which $T \circ R_\alpha$ does not satisfy the i.d.o.c condition.
\end{corollary}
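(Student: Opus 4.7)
The plan is to reduce the statement to the classical countability of saddle connections on a translation surface of finite area. By the preceding lemma applied to $T \circ R_\alpha$, this IET fails the i.d.o.c.\ condition if and only if the surface $S_{T \circ R_\alpha}$ admits a horizontal saddle connection distinct from the distinguished unit-length closed horizontal segment coming from the top/bottom identification. By the compatibility lemma, $S_{T \circ R_\alpha} = h_\alpha \cdot S_T$, and the shear $h_\alpha$ carries line segments of slope $\alpha$ on $S_T$ bijectively onto horizontal line segments on $h_\alpha \cdot S_T$, while fixing the set of cone points. Consequently, a nontrivial horizontal saddle connection on $S_{T \circ R_\alpha}$ corresponds, under $h_{-\alpha}$, to a saddle connection of slope $\alpha$ on $S_T$ other than the preimage of the distinguished closed segment. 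In particular, for every $\alpha$ in the exceptional set, $S_T$ supports a saddle connection of slope $\alpha$.

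It therefore suffices to show that the set of slopes of saddle connections on the fixed translation surface $S_T$ is at most countable. This is a well-known fact: on any finite-area translation surface, the number of saddle connections of length at most $R$ is finite for every $R > 0$. One quick way to see this is to develop segments in the plane and note that distinct saddle connections produce distinct holonomy vectors in $\mathbb{R}^2$, while a volume/packing argument bounds the number of lifts of the finitely many cone points lying within Euclidean distance $R$ of a fixed lift. Taking the union over $R \in \mathbb{N}$ yields a countable set of saddle connections on $S_T$, hence a countable set of directions. Since the exceptional set of $\alpha$ is contained in this countable set of slopes, the corollary follows.

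The only genuine ingredient beyond the two preceding lemmas is the countability of saddle connections; this is standard and can either be cited from \cite{Yoccoz2007-zz} or proved in situ via the packing argument sketched above. I expect this to be the mildest of obstacles rather than a true difficulty, and the remaining steps are essentially bookkeeping with the $h_\alpha$-equivariance of the $S_\cdot$ construction.
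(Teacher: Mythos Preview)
Your proof is correct and follows essentially the same route as the paper: use the lemma characterizing failure of i.d.o.c.\ via horizontal saddle connections on $S_{T\circ R_\alpha}$, invoke the compatibility $S_{T\circ R_\alpha}=h_\alpha\cdot S_T$ to translate these into saddle connections of slope $\alpha$ on $S_T$, and conclude by countability of saddle connections on a fixed translation surface. The only difference is that you supply a short justification of the countability fact, whereas the paper simply cites it.
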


\begin{proof}
The failure of $T \circ R_\alpha$ to satisfy the i.d.o.c is equivalent to the existence of non-horizontal saddle connections in $S_{T \circ R_\alpha}$ that are not $[(0, \alpha_0), (1, \beta_0)] \sim [(0, \alpha_d), (1, \beta_d)]$. Any horizontal saddle connection in $S_{T \circ R_\alpha} = h_\alpha \cdot S_T$ corresponds to a saddle connection in $S_T$ with slope $\alpha$. The result follows from the fact that there are countably many saddle connections on any translation surface.
\end{proof}

\subsection{Gap Theorem for IETs}
\label{subsection: gap distribution for IETs}

The argument used in \cref{section: the three gap theorem and zippered rectangles} can be adapted to $S_T$ to prove the following.

\begin{theorem}
For a fixed IET $T$ satisfying the i.d.o.c, and integer $N \geq 1$, the pairs \[(\text{gap length}, \text{number of gaps of that length})\] describing the multiset $\widetilde{\operatorname{Gaps}}_{T, N}$ correspond to the height-width parameters $(h, \lambda)$ of the zippered rectangle decomposition of the surface $S_T$ over the canonical horizontal trajectory of length $N$ coming out of the marked origin.
\end{theorem}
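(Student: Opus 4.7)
The proof is a direct adaptation of the Three Height Theorem (\cref{theorem: three height theorem}); the new content is verifying that the picture of ``horizontal strips between consecutive returns to $V$'' carries over from the torus to the higher-genus surface $S_T$. First, using that $T$ is the first return map of the horizontal flow on $S_T$ to $V = \{0\} \times [0, 1]$ (see \cref{subsection: the surface $S_T$}), the canonical horizontal trajectory $\gamma$ of length $N$ out of the marked origin meets $V$ in exactly the $N$ orbit points $\{T^k(0) : 0 \leq k \leq N-1\}$, together with the terminal point $T^N(0)$. By minimality of $T$ these are distinct; sorting them as $0 = z_0 < z_1 < \cdots < z_{N-1} < z_N := 1$ partitions $V$ into $N$ open gaps $G_j = (z_j, z_{j+1})$ of lengths $\ell_j = z_{j+1} - z_j$ whose multiset is $\widetilde{\operatorname{Gaps}}_{T, N}$.

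Next I would decompose $S_T$ into horizontal strips. Writing $\gamma$ as a concatenation of $N$ unit-length horizontal segments in the square $[0, 1]^2$ representing $S_T$ --- the $k$-th segment running at height $T^{k-1}(0)$ from $(0, T^{k-1}(0))$ to $(1, T^{k-1}(0)) \sim (0, T^k(0))$ --- the collection of heights is exactly $\{z_0, \ldots, z_{N-1}\}$. For each $j$, the open strip $R_j := [0, 1] \times (z_j, z_{j+1})$ is an open rectangle of dimensions $1 \times \ell_j$ bounded below and above by two consecutive segments of $\gamma$; the $N$ strips are pairwise disjoint and their closures tile $S_T$ (total area $\sum_j \ell_j = 1 = \operatorname{area}(S_T)$), giving the zippered rectangle decomposition of $S_T$ over $\gamma$.

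Grouping the $R_j$'s by common height then produces the stated correspondence. For each distinct gap length $\ell$ with multiplicity $n_\ell$, the $n_\ell$ strips of height $\ell$ are combinatorially glued (along their matching left/right vertical edges, which are identified via iterates of $T$) into a single abstract rectangle of width $\lambda = n_\ell$ and height $h = \ell$. This matches each pair (gap length, number of gaps of that length) with a height-width pair $(h, \lambda)$, yielding the bijection claimed in the theorem.

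The main obstacle is handling singularities. When the marked origin is a singularity (i.e., $\pi^{-1}(1) - \pi^{-1}(d) \neq 1$), or when some $T^{k-1}(0)$ coincides with an $\alpha_i$ or $\beta_j$, the corresponding segment of $\gamma$ passes through a cone point and requires the canonical continuation choice. I expect this choice to be precisely the one that makes the $R_j$-decomposition consistent across the singularity, but verifying this will require tracking cone angles and the horizontal saddle connections of $S_T$ with care, and is the most delicate part of the argument.
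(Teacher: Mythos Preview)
Your approach is essentially the paper's: the paper proves this theorem in one line (``The argument used in \cref{section: circle rotation} can be adapted to prove the following''), and you have written out exactly that adaptation --- identify $\gamma \cap V$ with the orbit $\{T^k 0\}_{k=0}^{N-1}$, and read off the zippered-rectangle heights as the vertical gaps. So the strategy is correct and matches the paper.

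One point deserves care. Your grouping step asserts that the $n_\ell$ strips of height $\ell$ glue (via $T$ on their vertical edges) into a \emph{single} rectangle of width $n_\ell$. What is actually true is that the right edge of the strip over the $k$-th unit segment of $\gamma$ glues to the left edge of the strip over the $(k+1)$-st segment \emph{provided} the gap above $T^{k-1}(0)$ contains no discontinuity $\beta_i$ of $T$ and neither endpoint of $\gamma$; when it does, the chain breaks. Consecutive strips in a chain do have equal height, so each chain is one zippered rectangle with integer width, but nothing a priori prevents two separate chains from having the same height $\ell$. The correct conclusion is that the heights $h_i$ of the zippered rectangles are exactly the gap lengths and the \emph{total} width $\sum_{h_i = \ell} \lambda_i$ equals the multiplicity $n_\ell$ --- which is all the theorem claims and all that is needed downstream for the $d+2$ bound. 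With that adjustment your argument is complete; the singularity discussion you flag is handled precisely by the canonical choice of continuation (it guarantees the $k$-th segment of $\gamma$ really sits at height $T^{k-1}(0)$), and no further cone-angle bookkeeping is required.
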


The above, along with \cref{proposition: number of discontinuities of first return map}, and the characterization of a marked origin being a singularity from \cref{lemma: characterization of a marked origin being a singularity} proves \cref{theorem: d + 2 gap theorem}.

\subsection{Gap Distribution for Compositions of IETs and Circle Rotations}

For a general IET $T$, we denote the number of \textbf{normalized gap lengths} greater than or equal to $z$ by
\begin{eqnarray*}
\mathcal{G}_{T, N}(z) &:=& \# \{\ell \in \widetilde{\operatorname{Gaps}}_{T, N} \mid \ell \geq \frac{z}{N}\} \\
&=& \#\{L \in N \cdot \widetilde{\operatorname{Gaps}}_{T, N} \mid L \geq z\}
\end{eqnarray*}

For compositions of $T$ and circle rotations, we can consider the \textbf{average gap distribution}
\begin{equation*}
    g_{T \circ R_\cdot}^{[a, b]}(z; N) := \frac{1}{b-a}\int_a^b \frac{\mathcal{G}_{T \circ R_\cdot, N}(z)}{N}\,d\alpha.
\end{equation*}

Copying \cref{subsection: notation}, we define an operator $\mathfrak{Z}_\text{canon}$ as follows: for any translation surface $S$ with a marked point, and a canonical choice of horizontal unit length trajectory coming out of the marked point, $\mathfrak{Z}_\text{canon}(S)$ is the vector of length-height data of the zippered rectangle decomposition of $S$ over the aforementioned horizontal.

An aggregate function $f_z$ can appropriately be defined similar to \cref{subsection: gap distribution for circle rotations}.

The argument used to prove \cref{theorem: distribution for TGT} can adapted to relate the average gap distribution forcompositions of a fixed IET $T$ and circle rotations to the distribution of zippered rectangle decomposition
\begin{equation*}
    g_{T \circ R_\cdot}^{[a, b]}(z; N) = \frac{1}{b-a} \int_a^b f_z(\mathfrak{Z}_\text{canon}(g_{\log N}h_\alpha \cdot S_T))\,d\alpha.
\end{equation*}
This proves the \cref{theorem: axiomatic theorem}.

\section{Gap Theorem for $d$-iets via Graph Theory}
\label{section: gap theorem for d-iets via graph theory}
In this section, we present a self-contained proof for \cref{theorem: d + 2 gap theorem} motivated by the Rauzy graph approach presented below. We use the same notation for IETs, discontinuities, gap sets, and gap multisets from before.

\subsection{The Rauzy Graph Approach}
\label{subsection: the Rauzy graph approach}

\begin{theorem}[$3(d-1)$ Gap Theorem, \cite{Boshernitzan1985-fb}]
\label{theorem: 3(d-1) gap theorem)}
Let $T$ be a minimal $d$-IET. Then
\begin{equation*}
    \# \operatorname{Gaps}_{T, N} \leq 3(d-1).
\end{equation*}
\end{theorem}

When $d = 2$, the bounds in \cref{theorem: d + 2 gap theorem}, and \cref{theorem: 3(d-1) gap theorem)} agree with that of the Three Gap Theorem (\cref{theorem: Three Gap Theorem}).

Following \cite{Boshernitzan1985-fb}, define a directed graph $\operatorname{GGaps}_{T, N} = (V_{T, N}, E_{T,N})$ as follows: The vertices agree with the gaps $V_{T,N} = \widetilde{\operatorname{Gaps}}_{T, N}$, and there exists an edge $e_{i,j} : \operatorname{gap}_{T, N}(i) \to \operatorname{gap}_{T, N}(j)$ if and only if $(T^{-1}\operatorname{gap}_{T,N}(i)) \cap \operatorname{gap}_{T,N}(j) \neq \varnothing$. It should be noted that $\operatorname{GGaps}_{T,N}$ is the \textbf{Rauzy graph} describing the coding generated by $T^{-1}$ and the partioning of $[0,1)$ defined by $\{T^n0\}_{n=0}^{N-1}$.

For a general directed graph $G = (V, E)$, a function $w : V \sqcup E \to \RR^+$ is said to be a \textbf{weight function} if $\sum_{v \in V} w(v) = 1$, and for any $v \in V$, \[w(v) = \sum_{e \in \operatorname{in}(v)}w(e) = \sum_{e \in \operatorname{out}(v)}w(e),\] where $\operatorname{in}(v)$ and $\operatorname{out}(v)$ are the edges of $G$ going in and coming out of $v$. The \textbf{support} $\operatorname{supp}(w)$ of $w$is the subgraph $G_w = (V_w, E_w) \subseteq G$ defined by $V_w = \{v \in V \mid w(v) > 0\}$ and $E_w = \{e \in E \mid w(e) > 0\}$.

A cycle $v_1, v_2, \cdots, v_k$ is said to be a \textbf{distinct cycle} if $\operatorname{indeg}(v_i) = \operatorname{outdeg}(v_i)$ for every $i = 1, 2, \cdots, k$. The following is then true.

\begin{proposition}[\cite{Boshernitzan1985-fb}]
Let $G$ be a directed graph without distinct cycles, and $w$ a weight function of $G$ supported on the whole graph. The cardinality of the set of possible vertex weights has the upper bound \[\#\{w(v) \mid v \in V\} \leq 3 (\#E - \#V).\]
\end{proposition}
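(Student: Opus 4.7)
The plan is to pin down equalities between vertex weights that are \emph{forced} by the balance condition at vertices of minimal degree, to use the no-distinct-cycle hypothesis to ensure that the resulting structure is acyclic, and then to estimate its size by a simple degree count. Concretely, I would call an edge $e : u \to v$ \emph{forced} if $\operatorname{outdeg}(u) = 1$ and $\operatorname{indeg}(v) = 1$. Each of these conditions alone gives $w(e) = w(u)$ or $w(e) = w(v)$, respectively, so together they yield $w(u) = w(v)$. Let $G_f = (V, E_f) \subseteq G$ denote the subgraph whose edges are exactly the forced ones.

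Next, in $G_f$ every vertex has both in-degree and out-degree at most $1$, so each connected component is either a simple path or a simple cycle. A cycle of $G_f$ would consist entirely of vertices $v_i$ satisfying $\operatorname{indeg}_G(v_i) = \operatorname{outdeg}_G(v_i) = 1$, hence would be a distinct cycle of $G$ --- contradicting the hypothesis. Therefore $G_f$ is a disjoint union of paths, its number of components equals $\#V - \#E_f$, and since all vertices within a single $G_f$-component share the same weight,
\[ \#\{w(v) : v \in V\} \;\leq\; \#V - \#E_f. \]

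It then remains to show $\#E_f \geq 4\#V - 3\#E$. Let $A = \{e \in E : \operatorname{outdeg}(\operatorname{tail}(e)) = 1\}$ and $B = \{e \in E : \operatorname{indeg}(\operatorname{head}(e)) = 1\}$, so that $E_f = A \cap B$. Each vertex with out-degree $1$ contributes exactly one edge to $A$, giving $\#A = \#\{v : \operatorname{outdeg}(v) = 1\}$, and analogously for $\#B$. Inclusion--exclusion then gives $\#E_f \geq \#A + \#B - \#E$. From $\#E = \sum_v \operatorname{outdeg}(v) \geq \#A + 2(\#V - \#A) = 2\#V - \#A$ one extracts $\#A \geq 2\#V - \#E$, and by symmetry $\#B \geq 2\#V - \#E$, whence
\[ \#E_f \;\geq\; (2\#V - \#E) + (2\#V - \#E) - \#E \;=\; 4\#V - 3\#E, \]
so $\#V - \#E_f \leq 3(\#E - \#V)$, finishing the argument.

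I do not expect a serious obstacle; the only genuinely substantive step is ruling out cycles of $G_f$, where the hypothesis must be invoked in the precise form stated (equal in- and out-degrees in $G$, not merely in the cycle itself). As a sanity check, the graph with edges $a \to b$, $a \to c$, $b \to d$, $c \to d$, $d \to a$ has $\#V=4$, $\#E=5$, $\#E_f=1$, and admits weight functions with $\#\{w(v)\} = 3 = 3(\#E-\#V)$, confirming both that the constant $3$ is correct and that every inequality above can be simultaneously tight.
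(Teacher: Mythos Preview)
Your argument is correct. The paper does not supply its own proof of this proposition --- it is quoted verbatim as a result of Boshernitzan and then applied to the Rauzy graph $\operatorname{GGaps}_{T,N}$ --- so there is no in-paper proof to compare against. One small point you leave implicit: the inequality $\#E \geq \#A + 2(\#V - \#A)$ presumes that every vertex with $\operatorname{outdeg} \neq 1$ in fact has $\operatorname{outdeg} \geq 2$; this is justified because the balance condition $w(v) = \sum_{e \in \operatorname{out}(v)} w(e)$ together with $w(v) > 0$ (full support) forces $\operatorname{outdeg}(v) \geq 1$ for every $v$, and symmetrically for in-degrees. With that remark added, the proof is complete.
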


The Lebesgue measure induces a weight function $w$ on the graph $\operatorname{GGaps}_{T, N}$: for every $v_i \in V_{T, N}$, take $w(v_i) = \operatorname{Leb}(\operatorname{gap}_{T,N}(i))$, and for every $e_{i, j} \in E_{T,N}$, take $w(e_{i, j}) = \operatorname{Leb}((T^{-1}\operatorname{gap}_{T,N}(i)) \cap \operatorname{gap}_{T,N}(j))$.

The outdegree of a vertex $v_i \in \operatorname{GGaps}_{T, N}$ can be expressed as
\[\operatorname{outdeg}(v_i) = 1 + 1_{\operatorname{gap}_{T,N}(i)}(T^N0) + \sum_{i=1}^{d-1}1_{\operatorname{gap}_{T,N}(i)}(\alpha_i).\] From this it follows that
\begin{eqnarray*}
\#E_{T,N} - \#V_{T,N} &=& \sum_{v \in V} (\operatorname{outdeg}(v) - 1) \\
&=& \sum_{i=0}^{d-1} \left(1_{\operatorname{gap}_{T,N}(i)}(T^N0) + \sum_{k=1}^{d-1}1_{\operatorname{gap}_{T,N}(i)}(\alpha_k)\right) \\
&=& d - 1.
\end{eqnarray*}
The reason this is $d - 1$ and not $d$ is that $T^10 = \alpha_{i_0}$, with $i_0 = \pi(1) - 1$. That is, the discontinuity $\alpha_{i_0}$ is an endpoint of one of the gaps, and hence necessarily does not contribute to the sum.

A minimal IET $T$ will not have distinct cycles in $\operatorname{GGaps}_{T, N}$. This proves the $3(d-1)$ Gap Theorem (\cref{theorem: 3(d-1) gap theorem)}).

\subsection{Modifying the Rauzy Graphs $\operatorname{GGaps}_{T, N}$}

In this section, we modify the Rauzy graphs $\operatorname{GGaps}_{T, N}$ that were used in \cref{subsubsection: gaps of iets} to prove \cref{theorem: 3(d-1) gap theorem)}.

We construct graphs $\operatorname{FGaps}_{T, N}$ to replace the Rauzy graphs $\operatorname{GGaps}_{T, N}$ as follows: For $i=0, 1, \cdots, d-1$, we denote the closest orbit point in the orbit segment $\{T^n0\}_{n=0}^{N-1}$ to a discontinuity $\beta_i$ of $T$ from the right by
\[r(i) = r_{T, N}(i) := \min \{T^n0 \mid 1 \leq n < N, T^n0 > \beta_i\},\]
and for $i = 1, 2, \cdots, d$, we denote the closest orbit point to a discontinuity $\beta_i$ from the left by
\[l(i) = l_{T, N}(i) := \max \{T^n0 \mid 1 \leq n < N, T^n0 < \beta_i\}.\] We also denote the collection of ``slots'' on the right and left of the discontinuities of $T$ by
\begin{equation*}
    \mathfrak{R} = \mathfrak{R}_{T,N} := \{(\beta_i, r(i)) \mid i = 0, 1, \cdots, d-1\},
\end{equation*}
and
\begin{equation*}
    \mathfrak{L} = \mathfrak{L}_{T,N} := \{(l(i), \beta_i) \mid i = 1, 2, \cdots, d\}.
\end{equation*}
Finally, we write
\begin{equation*}
    R_i = R_{T, N, i} := r(i) - \beta_i,\ i = 0, 1, \cdots, d-1
\end{equation*}
and
\begin{equation*}
    L_i = L_{T, N, i} := \beta_i - l(i),\ i = 1, 2, \cdots, d.
\end{equation*}
for the respective lengths of the right and left slots. As was done in \cref{subsubsection: gaps of iets}, we \emph{notationally} identify $R_i$ with $(\beta_i, r(i))$, and $L_i$ with $(l(i), \beta_i)$.

We define the graph $\operatorname{FGaps}_{T, N}$ as follows:
\begin{itemize}
\item Vertices: $FV_{T, N} = \widetilde{Gaps}_{T, N} \cup \mathfrak{R}_{T,N} \cup \mathfrak{L}_{T,N}$.
\item Edges: When $T^{-1}$ acts on a gap in $\widetilde{\operatorname{Gaps}}_{T, N}$, it maps it to a disjoint union---modulo the endpoints---of elements of $\widetilde{Gaps}_{T, N} \cup \mathfrak{R}_{T,N} \cup \mathfrak{L}_{T,N}$. A directed edge goes out of each gap and into the elements of $\widetilde{Gaps}_{T, N} \cup \mathfrak{R}_{T,N} \cup \mathfrak{L}_{T,N}$ when acted on by $T^{-1}$. The collection of all edges is denoted by $FE_{T,N}$.
\end{itemize}

It can be easily seen that $\operatorname{FGaps}_{T, N}$ is a forest. It should also be noted that if we glue each pair $L_i$ and $R_i$, $i = 1, 2, \cdots, d-1$, together and identify them with the gap $L_i \cup \{\beta_i\} \cup R_i$ surrounding the discontinuity $\beta_i$, we get the Rauzy graph $\operatorname{GGaps}_{T, N}$.

We first present an example to make both the definition of the forest and the idea of the proof that will follow clearer.

\subsection{Example}

Consider the IET with length data $\mathbf{\lambda} = (1/\sqrt{3}, 1/\sqrt{2} - 1/\sqrt{3}, 1 - 1/\sqrt{2})$ (i.e. $T$ has discontinuities at $\beta_1 = 1/\sqrt{3}$ and $\beta_2 = 1/\sqrt{2}$), and combinatorial data $\pi = \begin{pmatrix} 1 & 2 & 3 \\ 3 & 2 & 1\end{pmatrix}$. The inverse map $T^{-1}$ is an IET, with length data $\mathbf{\lambda}^{-1} = (1 - 1/\sqrt{2}, 1/\sqrt{2} - 1/\sqrt{3}, 1/\sqrt{3})$ (i.e. discontinuities at $\alpha_1 = 1 - 1/\sqrt{2}$ and $\alpha_2 = 1 - 1/\sqrt{3}$), and combinatorial data $\pi^{-1} = \pi$.

Below we show two copies of the interval $[0,1)$, along with the orbit points $\{T^k 0\}_{k=1}^8$, the points $\alpha_i$ bounding the subintervals being permuted by $T^{-1}$, and the points $\beta_i$ bounding the subintervals being permuted by $T$.

\begin{center}
\begin{tikzpicture}[scale = 11]
\draw (0,0) -- (1,0)

node[fill=red, scale = 0.25] at (0.42265,0) {}
node at (0.42265,0.05) {$T^{1}0$}

node[fill=red, scale = 0.25] at (0.845299,0) {}
node at (0.845299,0.05) {$T^{2}0$}

node[fill=red, scale = 0.25] at (0.138193,0) {}
node at (0.138193,0.05) {$T^{3}0$}

node[fill=red, scale = 0.25] at (0.560842,0) {}
node at (0.560842,0.05) {$T^{4}0$}

node[fill=red, scale = 0.25] at (0.983492,0) {}
node at (0.983492,0.05) {$T^{5}0$}

node[fill=red, scale = 0.25] at (0.276385,0) {}
node at (0.276385,0.05) {$T^{6}0$}

node[fill=red, scale = 0.25] at (0.699035,0) {}
node at (0.699035,0.05) {$T^{7}0$}

node[fill=cyan, scale = 0.25] at (0.414578,0) {}
node at (0.414578,0.08) {$\textcolor{cyan}{T^{8}0}$};

\draw [green]
(0,-0.015) -- (0,0.015)
(0.292893, -0.015) -- (0.292893, 0.015)
(0.42265, -0.015) -- (0.42265, 0.015)
(1,-0.015) -- (1, 0.015);
\draw
node at (0,-0.05) {$\alpha_0$}
node at (0.292893,-0.05) {$\alpha_1$}
node at (0.42265,-0.05) {$\alpha_2$}
node at (1,-0.05) {$\alpha_3$};
\end{tikzpicture}

\begin{tikzpicture}[scale = 11]
\draw (0,0) -- (1,0)

node[fill=red, scale = 0.25] at (0.42265,0) {}
node at (0.42265,0.05) {$T^{1}0$}

node[fill=red, scale = 0.25] at (0.845299,0) {}
node at (0.845299,0.05) {$T^{2}0$}

node[fill=red, scale = 0.25] at (0.138193,0) {}
node at (0.138193,0.05) {$T^{3}0$}

node[fill=red, scale = 0.25] at (0.560842,0) {}
node at (0.560842,0.05) {$T^{4}0$}

node[fill=red, scale = 0.25] at (0.983492,0) {}
node at (0.983492,0.05) {$T^{5}0$}

node[fill=red, scale = 0.25] at (0.276385,0) {}
node at (0.276385,0.05) {$T^{6}0$}

node[fill=red, scale = 0.25] at (0.699035,0) {}
node at (0.699035,0.05) {$T^{7}0$};

\draw [blue]
(0,-0.015) -- (0,0.015)
(0.57735, -0.015) -- (0.57735, 0.015)
(0.707107, -0.015) -- (0.707107, 0.015)
(1,-0.015) -- (1, 0.015);
\draw
node at (0,-0.05) {$\beta_0$}
node at (0.57735,-0.05) {$\beta_1$}
node at (0.707107,-0.05) {$\beta_2$}
node at (1,-0.05) {$\beta_3$};
\end{tikzpicture}
\end{center}

Now, considering the action of $T^{-1}$ on $\widetilde{\operatorname{Gaps}}_{T, 8}$, we get the following diagram for the forest $\operatorname{FGaps}_{T, 8}$.
\begin{displaymath}
\xymatrix@C-1pc{
(T^4 0, T^7 0) \ar[r] & (T^3 0, T^6 0) \ar[r] & (T^2 0, T^5 0) \ar[r] & (T^1 0, T^4 0) \ar[r] & \underline{(\beta_0, T^3 0)} \\
 & & \underline{(T^5 0, \beta_3)} & & \\
(T^7 0, T^2 0) \ar[r] & \underline{(T^6 0, T^1 0)} \ar[ur] \ar[r] \ar[dr] & (\beta_1, T^7 0) & & \\
 & & (T^7 0, \beta_2) & &
}
\end{displaymath}
When we get a gap pointing to another gap like
\begin{displaymath}
\xymatrix{
(T^3 0, T^6 0) \ar[r] & (T^2 0, T^5 0)
}
\end{displaymath}
the two gaps have the same length. If a gap splits (i.e. its preimage under $T^{-1}$ is a disjoint union of more than one interval), its length is the sum of the intervals it splits into. As such, the gaps in the above diagram that contribute to the set of gap lengths $\operatorname{Gaps}_{T, 8}$ ar the gaps that show on the extreme right, namely $(\beta_0, T^3 0)$ and $(T^5 0, \beta_3)$, and the gaps that split, namely $(T^6 0, T^1 0)$. Note that the intervals showing on the extreme right in the above diagram are all slots surrounding the discontinuitites $\beta_i$ of $T$.

We thus get that \[\operatorname{Gaps}_{T, 8} := \{R_0, L_3, L_3 + R_1 + L_2\}.\]

\subsection{Combinatorial Proof of Theorem \ref{theorem: d + 2 gap theorem}}

We consider any orbit $\{T^n0\}_{n=0}^{N-1}$, with $N$ an integer large enough so that the orbit points separate the discontinuities $\alpha_i$ of $T^{-1}$. For counting purposes, we have to consider:
\begin{enumerate}
\item in \cref{subsection: gaps that split}: the gaps that intersect with the discontinuities of $T^{-1}$, or the point $T^N 0$, and so their images under $T^{-1}$ split, and
\item in \cref{subsection: gaps that get pulled to other gaps}: the gaps whose images under $T^{-1}$ are other gaps, and
\item in \cref{subsection: the first and last gaps}: the first and last gaps.
\end{enumerate}

We call the point $T^N0$ a \textbf{ghost orbit point}, since it is not in the orbit segment $\{T^n0\}_{n=0}^{N-1}$, but introduces the orbit point $T^{N-1}0$ when it is acted on by $T^{-1}$.

We enumerate the different cases pictorially, with red representing orbit points, and cyan representing the point $T^N 0$. The index $i_0 = \pi(1) - 1$ specifies the discontinuity $\alpha_{i_0}$ where $T^1 0$ occurs.

\begin{center}
\begin{tikzpicture}
\draw (0,0) -- (12,0)
node at (0.5, 0.5) {\tiny Case V}
node[fill=red] at (1, 0) {}
node[fill=red] at (2, 0) {}
node at (2.5, 0.5) {\tiny Case IV}
node at (3.5, 0.5) {\tiny Case IV}
node[fill=red] at (4, 0) {}
node at (4.5, 0.5) {\tiny Case I}
node[fill=red] at (5, 0) {}
node[fill=red] at (7, 0) {}
node[fill=red] at (8, 0) {}
node at (8.5, 0.5) {\tiny Case II}
node[fill=red] at (9,0) {}
node at (9.5, 0.5) {\tiny Case III}
node at (11.5, 0.5) {\tiny Case VI}
node[fill=red] at (10, 0) {}
node[fill=red] at (11, 0) {};
\draw[green] (3,-0.25) -- (3, 0.25);
\draw[green] (6,-0.25) -- (6, 0.25);
\draw[green] (9,-0.25) -- (9, 0.25);
\end{tikzpicture}
\end{center}

Based on the location where the ghost orbit point $T^N0$ situates itself, there are six distinct cases that should be accounted for.
\begin{itemize}
\item Case I: The point $T^N0$ belongs to a gap $(T^{n_1}0, T^{n_2}0)$ with $n_1, n_2 \neq 1$.
\item Case II: Same as case I with $n_2 = 1$.
\item Case III: Same as case I with $n_1 = 1$.
\item Case IV: The point $T^N0$ belongs to $(T^{n_1}0, T^{n_2}0)$ that includes a $\alpha_i$.
\item Case V: The point $T^n 0$ belongs to the first gap $(0, T^{\sigma(1)}0)$.
\item Case VI: The point $T^N0$ belongs to the last gap $(T^{\sigma(N-1)}0, 1)$.
\end{itemize}

We now proceed to count the possible number of gap lengths.

\subsection{Gaps that Split}
\label{subsection: gaps that split}

\subsubsection{Case I}

\begin{itemize}

\item One instance of

\begin{center}
\begin{tikzpicture}
\draw (-0.5,0) -- (4.5,0) node[fill=red] at (0,0) {} node at (0, 0.5) {$T^{n_1}0$} node[fill=red] at (2,0) {} node at (2,0.5) {$T^1 0$} node at (2,-0.5) {$\alpha_{i_0}$} node[fill=red] at (4,0) {} node at (4,0.5) {$T^{n_2}0$};
\draw[green] (2,-0.25) -- (2, 0.25);
\end{tikzpicture}
\end{center}

\begin{equation}
\label{eq:I1}
\tag{I.1}
\xymatrix{
(T^{n_1}0, T^10) \ar[d]    & (T^1 0, T^{n_2}0) \ar[d] \\
(T^{n_1 - 1}0, \beta_{\pi^{-1}(i_0)})    & (\beta_0, T^{n_2 - 1} 0)
}
\end{equation}

This contributes the two lengths $L_{\pi^{-1}(i_0)}$ and $R_0$ to the set of gap lengths.

\item $d-2$ instances of

\begin{center}
\begin{tikzpicture}
\draw (-0.5,0) -- (4.5,0) node[fill=red] at (0,0) {}  node at (0, 0.5) {$T^{n_1}0$} node at (2,-0.5) {$\alpha_i$} node[fill=red] at (4,0) {} node at (4,0.5) {$T^{n_2}0$};
\draw[green] (2,-0.25) -- (2, 0.25);
\end{tikzpicture}
\end{center}

\begin{equation}
\label{eq:I2}
\tag{I.2}
\xymatrix{
    &  (T^{n_1}0, T^{n_2}0) \ar[dl] \ar[dr]  &   \\
(T^{n_1 - 1}0, \beta_{\pi^{-1}(i)}) & & (\beta_{\pi^{-1}(i+1) - 1}, T^{n_2 - 1}0)
}
\end{equation}

Each of those instances contributes $L_{\pi^{-1}(i)} + R_{\pi^{-1}(i+1) - 1}$ to the set of gap lengths, for a total of $d-2$ elements.

\item One instance of

\begin{center}
\begin{tikzpicture}
\draw (-0.5,0) -- (4.5,0) node[fill=red] at (0,0) {} node at (0, 0.5) {$T^{n_1}0$} node[fill=cyan] at (2,0) {} node at (2, 0.5) {$T^N0$} node at (2,0.5) {$T^N 0$} node[fill=red] at (4,0) {} node at (4,0.5) {$T^{n_2}0$};
\end{tikzpicture}
\end{center}

\begin{equation}
\label{eq:I3}
\tag{I.3}
\xymatrix{
 & (T^{n_1}0, T^{n_2}0) \ar[dl] \ar[dr] & \\
(T^{n_1 - 1}0, T^{N-1}0) & & (T^{N-1}0, T^{n_2 - 1}0)
}
\end{equation}

This contributes the sum of the lengths of the two consecutive gaps $(T^{n_1 - 1}0, T^{N-1}0)$ and $(T^{N-1}0, T^{n_2 - 1}0)$ to the set of gap lengths.
\end{itemize}

\subsubsection{Case II}

\begin{itemize}

\item One instance of

\begin{center}
\begin{tikzpicture}
\draw (-0.5,0) -- (4.5,0) node[fill=red] at (0,0) {} node at (0, 0.5) {$T^{n_1}0$}  node[fill=cyan] at (1,0) {} node at (1, 0.5) {$T^N0$} node[fill=red] at (2,0) {} node at (2,0.5) {$T^1 0$} node at (2,-0.5) {$\alpha_{i_0}$} node[fill=red] at (4,0) {} node at (4,0.5) {$T^{n_2}0$};
\draw[green] (2,-0.25) -- (2, 0.25);
\end{tikzpicture}
\end{center}

\begin{equation}
\label{eq:II1}
\tag{II.1}
\xymatrix@C-2pc{
    & (T^{n_1}0, T^10) \ar[dl] \ar[dr] & & (T^1 0, T^{n_2}0) \ar[d] \\
(T^{n_1 - 1}0, T^{N-1}0) &    & (T^{N-1}0, \beta_{\pi^{-1}(i_0)}) & (\beta_0, T^{n_2 - 1} 0)
}
\end{equation}

This instance contributes the two lengths $|(T^{n_1 - 1}0, T^{N-1}0)| + L_{\pi^{-1}(i_0)}$ and $R_0$ to the set of gap lengths.

\item $d-2$ instances of

\begin{center}
\begin{tikzpicture}
\draw (-0.5,0) -- (4.5,0) node[fill=red] at (0,0) {} node at (0, 0.5) {$T^{n_1}0$} node at (2,-0.5) {$\alpha_i$} node[fill=red] at (4,0) {} node at (4,0.5) {$T^{n_2}0$};
\draw[green] (2,-0.25) -- (2, 0.25);
\end{tikzpicture}
\end{center}

\begin{equation}
\label{eq:II2}
\tag{II.2}
\xymatrix{
    & (T^{n_1}0, T^{n_2}0) \ar[dr] \ar[dl] & \\
(T^{n_1 - 1}0, \beta_{\pi^{-1}(i)}) &   & (\beta_{\pi^{-1}(i+1) - 1}, T^{n_2 - 1}0)
}
\end{equation}

Each of these instance contribute $L_{\pi^{-1}(i)} + R_{\pi^{-1}(i+1) - 1}$ to the set of gap lengths, for a total of $d-2$ lengths.

\end{itemize}

\subsubsection{Case III}

\begin{itemize}

\item One instance of

\begin{center}
\begin{tikzpicture}
\draw (-0.5,0) -- (4.5,0) node[fill=red] at (0,0) {} node at (0, 0.5) {$T^{n_1}0$} node[fill=red] at (2,0) {} node at (2,-0.5) {$\alpha_{i_0}$} node at (2,0.5) {$T^1 0$}   node[fill=cyan] at (3,0) {} node at (3, 0.5) {$T^N0$} node[fill=red] at (4,0) {} node at (4,0.5) {$T^{n_2}0$};
\draw[green] (2,-0.25) -- (2, 0.25);
\end{tikzpicture}
\end{center}

\begin{equation}
\label{eq:III1}
\tag{III.1}
\xymatrix@C-2pc{
(T^{n_1}0, T^1 0) \ar[d] & & (T^1 0, T^{n_2}0) \ar[dl] \ar[dr] \\
(T^{n_1 - 1}0, \beta_{\pi^{-1}(i_0)}) & (\beta_0, T^{N-1} 0) & & (T^{N-1} 0, T^{n_2 - 1}0)
}
\end{equation}

\item $d-2$ instances of

\begin{center}
\begin{tikzpicture}
\draw (-0.5,0) -- (4.5,0) node[fill=red] at (0,0) {} node at (0, 0.5) {$T^{n_1}0$} node at (2,-0.5) {$\alpha_i$} node[fill=red] at (4,0) {} node at (4,0.5) {$T^{n_2}0$};
\draw[green] (2,-0.25) -- (2, 0.25);
\end{tikzpicture}
\end{center}

\begin{equation}
\label{eq:III2}
\tag{III.2}
\xymatrix@C-2pc{
    & (T^{n_1}0, T^{n_2}0) \ar[dr] \ar[dl] & \\
(T^{n_1 - 1}0, \beta_{\pi^{-1}(i)}) &   & (\beta_{\pi^{-1}(i+1) - 1}, T^{n_2 - 1}0)
}
\end{equation}

Each of those instances contribute $L_{\pi^{-1}(i)} + R_{\pi^{-1}(i+1) - 1}$ to the set of gap lengths, for a total of $d-2$ lengths.

\end{itemize}

\subsubsection{Case IV}

\begin{itemize}

\item One instance of

\begin{center}
\begin{tikzpicture}
\draw (-0.5,0) -- (4.5,0) node[fill=red] at (0,0) {} node at (0, 0.5) {$T^{n_1}0$} node[fill=red] at (2,0) {} node at (2,0.5) {$T^1 0$} node at (2,-0.5) {$\alpha_{i_0}$} node[fill=red] at (4,0) {} node at (4,0.5) {$T^{n_2}0$};
\draw[green] (2,-0.25) -- (2, 0.25);
\end{tikzpicture}
\end{center}

\begin{equation}
\label{eq:IV1}
\tag{IV.1}
\xymatrix@C-2pc{
(T^{n_1}0,\ T^10) \ar[d] & (T^1 0,\ T^{n_2}0) \ar[d] \\
(T^{n_1 - 1}0,\ \beta_{\pi^{-1}(i_0)}) & (\beta_0,\ T^{n_2 - 1} 0)
}
\end{equation}

This contributes two lengths $L_{\pi^{-1}(i_0)}$ and $R_0$ to the set of gap lengths.

\item One instance of

\begin{center}
\begin{tikzpicture}
\draw (-0.5,0) -- (4.5,0) node[fill=red] at (0,0) {} node at (0, 0.5) {$T^{n_1}0$} node at (2,-0.5) {$\alpha_i$} node[fill=cyan] at (1,0) {} node at (1, 0.5) {$T^N0$} node[fill=red] at (4,0) {} node at (4,0.5) {$T^{n_2}0$};
\draw[green] (2,-0.25) -- (2, 0.25);
\end{tikzpicture}
\end{center}

\begin{equation}
\label{eq:IV2}
\tag{IV.2}
\xymatrix@C-2pc{
    & (T^{n_1}0, T^{n_2}0) \ar[dl] \ar[d] \ar[dr] & \\
(T^{n_1-1}0, T^{N-1}0) & (T^{N-1}0, \beta_{\pi^{-1}(i)}) & (\beta_{\pi^{-1}(i+1)-1}, T^{n_2-1}0)
}
\end{equation}

This instance contributes $|(T^{n_1 - 1}0, T^{N-1}0)| + L_{\pi^{-1}(i)} + R_{\pi^{-1}(i+1) - 1}$ to the set of gap lengths.

\item $d-3$ instances of

\begin{center}
\begin{tikzpicture}
\draw (-0.5,0) -- (4.5,0) node[fill=red] at (0,0) {} node at (0, 0.5) {$T^{n_1}0$} node at (2,-0.5) {$\alpha_i$} node[fill=red] at (4,0) {} node at (4,0.5) {$T^{n_2}0$};
\draw[green] (2,-0.25) -- (2, 0.25);
\end{tikzpicture}
\end{center}

\begin{equation}
\label{eq:IV3}
\tag{IV.3}
\xymatrix@C-2pc{
    & (T^{n_1}0, T^{n_2}0) \ar[dr] \ar[dl] & \\
(T^{n_1 - 1}0, \beta_{\pi^{-1}(i)}) &   & (\beta_{\pi^{-1}(i+1) - 1}, T^{n_2 - 1}0)
}
\end{equation}

Each of these instances contribute $L_{\pi^{-1}(i)} + R_{\pi^{-1}(i+1)-1}$ to the set of gap lengths, for a total of $d-3$ lengths.

\end{itemize}

\subsubsection{Case V}

\begin{itemize}

\item One instance of

\begin{center}
\begin{tikzpicture}
\draw (-0.5,0) -- (4.5,0) node[fill=red] at (0,0) {} node at (0, 0.5) {$T^{n_1}0$} node[fill=red] at (2,0) {} node at (2,0.5) {$T^1 0$} node at (2,-0.5) {$\alpha_{i_0}$} node[fill=red] at (4,0) {} node at (4,0.5) {$T^{n_2}0$};
\draw[green] (2,-0.25) -- (2, 0.25);
\end{tikzpicture}
\end{center}

\begin{equation}
\label{eq:V1}
\tag{V.1}
\xymatrix{
(T^{n_1}0, T^10) \ar[d]    & (T^1 0, T^{n_2}0) \ar[d] \\
(T^{n_1 - 1}0, \beta_{\pi^{-1}(i_0)})    & (\beta_0, T^{n_2 - 1} 0)
}
\end{equation}

This contributes the two lengths $L_{\pi^{-1}(i_0)}$ and $R_0$ to the set of gap lengths.

\item $d-2$ instances of

\begin{center}
\begin{tikzpicture}
\draw (-0.5,0) -- (4.5,0) node[fill=red] at (0,0) {}  node at (0, 0.5) {$T^{n_1}0$} node at (2,-0.5) {$\alpha_i$} node[fill=red] at (4,0) {} node at (4,0.5) {$T^{n_2}0$};
\draw[green] (2,-0.25) -- (2, 0.25);
\end{tikzpicture}
\end{center}

\begin{equation}
\label{eq:V2}
\tag{V.2}
\xymatrix{
    &  (T^{n_1}0, T^{n_2}0) \ar[dl] \ar[dr]  &   \\
(T^{n_1 - 1}0, \beta_{\pi^{-1}(i)}) & & (\beta_{\pi^{-1}(i+1) - 1}, T^{n_2 - 1}0)
}
\end{equation}

Each of those instances contributes $L_{\pi^{-1}(i)} + R_{\pi^{-1}(i+1) - 1}$ to the set of gap lengths, for a total of $d-2$ elements.

\item One instance of

\begin{center}
\begin{tikzpicture}
\draw (-0.5,0) -- (4.5,0) node at (-0.5, 0.5) {$\alpha_0 = 0$} node[fill=cyan] at (2,0) {} node at (2, 0.5) {$T^N0$} node at (2,0.5) {$T^N 0$} node[fill=red] at (4,0) {} node at (4,0.5) {$T^{\sigma(1)}0$};
\end{tikzpicture}
\end{center}

This has the first gap length $R_0$.
\end{itemize}

\subsubsection{Case VI}

\begin{itemize}

\item One instance of

\begin{center}
\begin{tikzpicture}
\draw (-0.5,0) -- (4.5,0) node[fill=red] at (0,0) {} node at (0, 0.5) {$T^{n_1}0$} node[fill=red] at (2,0) {} node at (2,0.5) {$T^1 0$} node at (2,-0.5) {$\alpha_{i_0}$} node[fill=red] at (4,0) {} node at (4,0.5) {$T^{n_2}0$};
\draw[green] (2,-0.25) -- (2, 0.25);
\end{tikzpicture}
\end{center}

\begin{equation}
\label{eq:VI1}
\tag{VI.1}
\xymatrix{
(T^{n_1}0, T^10) \ar[d]    & (T^1 0, T^{n_2}0) \ar[d] \\
(T^{n_1 - 1}0, \beta_{\pi^{-1}(i_0)})    & (\beta_0, T^{n_2 - 1} 0)
}
\end{equation}

This contributes the two lengths $L_{\pi^{-1}(i_0)}$ and $R_0$ to the set of gap lengths.

\item $d-2$ instances of

\begin{center}
\begin{tikzpicture}
\draw (-0.5,0) -- (4.5,0) node[fill=red] at (0,0) {}  node at (0, 0.5) {$T^{n_1}0$} node at (2,-0.5) {$\alpha_i$} node[fill=red] at (4,0) {} node at (4,0.5) {$T^{n_2}0$};
\draw[green] (2,-0.25) -- (2, 0.25);
\end{tikzpicture}
\end{center}

\begin{equation}
\label{eq:VI2}
\tag{VI.2}
\xymatrix{
    &  (T^{n_1}0, T^{n_2}0) \ar[dl] \ar[dr]  &   \\
(T^{n_1 - 1}0, \beta_{\pi^{-1}(i)}) & & (\beta_{\pi^{-1}(i+1) - 1}, T^{n_2 - 1}0)
}
\end{equation}

Each of those instances contributes $L_{\pi^{-1}(i)} + R_{\pi^{-1}(i+1) - 1}$ to the set of gap lengths, for a total of $d-2$ elements.

\item One instance of

\begin{center}
\begin{tikzpicture}
\draw (-0.5,0) -- (4.5,0) node[fill=red] at (0,0) {} node at (0, 0.5) {$T^{\sigma(N-1)}0$} node[fill=cyan] at (2,0) {} node at (2, 0.5) {$T^N0$} node at (2,0.5) {$T^N 0$} node at (4.5,0.5) {$\alpha_d = 1$};
\end{tikzpicture}
\end{center}

This has the last gap length $L_d$.
\end{itemize}

\subsection{Gaps that Get Pulled to Other Gaps}
\label{subsection: gaps that get pulled to other gaps}

If a gap $(T^{n_1}0, T^{n_2}0)$ with $n_1, n_2 \neq 1$ does not include $T^N 0$ or a discontinuity $\alpha_i$ of $T^{-1}$, it will be pulled back by $T^{-1}$ to the gap $(T^{n_1 - 1}0, T^{n_2 - 1}0)$.

\begin{center}
\begin{tikzpicture}
\draw (-0.5,0) -- (4.5,0) node[fill=red] at (0,0) {} node at (0,0.5) {$T^{n_1}0$} node[fill=red] at (4,0) {} node at (4,0.5) {$T^{n_2}0$};
\end{tikzpicture}
\end{center}

The two gaps have the same lengths, so no instance of 
\begin{equation*}
\xymatrix{
(T^{n_1}0, T^{n_2}0) \ar[d] \\ (T^{n_1 - 1}0, T^{n_2 - 1}0)
}
\end{equation*}
contributes new elements to the set of gap lengths.

\subsection{The First and Last Gaps: $(0, T^{\sigma(1)}0)$ and $(T^{\sigma(N-1)}0, 1)$}
\label{subsection: the first and last gaps}

The first gap $(0, T^{\sigma(1)}0)$ shows up as a gap length in \ref{eq:I1}, \ref{eq:II1}, \ref{eq:IV1}, \ref{eq:V1}, and \ref{eq:VI1}; and so its length $R_0$ is accounted for in those cases. However, the length of the first gap $R_0$ shows up as a summand in \ref{eq:III1}. In that case, the first gap contributes $R_0$ to the set of gap lengths on its own.

The situation is similar with the last gap $(T^{\sigma(N-1)}0, 1)$ and its length $L_d$. We get one of two cases:
\begin{enumerate}
\item If $\pi^{-1}(i_0) = d$ (i.e. if $\pi(1) - \pi(d) = 1$) in cases I, III, IV, V, and VI above: the length of the last gap $L_d$ shows up as a gap length in equations \ref{eq:I1}, \ref{eq:III1}, \ref{eq:IV1}, \ref{eq:V1}, \ref{eq:VI1} and is accounted for. (Note that in case VI the last gap length shows as a gap length.)
\item Otherwise, the length of the last gap contribute as a summand in \ref{eq:I2}, \ref{eq:II1} or \ref{eq:II2}, \ref{eq:III2}, \ref{eq:IV2} or \ref{eq:IV3} and is not accounted for. In that case, the last gap contributes $L_d$ to the set of gap lengths on its own.
\end{enumerate}

Adding up all the contributions, we get the following table with upper bounds on the number of possible gap lengths:

\begin{center}
\begin{tabular}{|c|c|c|}
\hline
 & $\pi^{-1}(i_0) = d$ & $\pi^{-1}(i_0) \neq d$ \\
\hline
Case I & $d+1$  & $d+2$ \\
\hline
Case II & $d+1$  & $d+1$ \\
\hline
Case III & $d+1$  & $d+2$  \\
\hline
Case IV & $d$ & $d+1$ \\
\hline
Case V & $d$  & $d+1$ \\
\hline
Case VI & $d$  & $d+1$ \\
\hline
\end{tabular}
\end{center}
This proves \cref{theorem: d + 2 gap theorem}.

\end{document}